\setlist{leftmargin=*}
  \def\voidb@x{}%
  \def\mathscr#1{#1}%
  \def\mathbb#1{#1}%
  \def\textbf#1{#1}%
  \def\emph#1{#1}%
\theoremstyle{plain}
\newtheorem{theorem}{Theorem}[section]
\newtheorem{lemma}[theorem]{Lemma}
\newtheorem{corollary}[theorem]{Corollary}
\newtheorem{conjecture}[theorem]{Conjecture}
\newtheorem{notation}{Notation}
\theoremstyle{definition}
\newtheorem{definition}[theorem]{Definition}
\theoremstyle{remark}
\title{Ergodic Theorems for Random Walks in Random Environments}
\author{Ayan Ghosh}
\address{Indian Statistical Institute, Kolkata\\
203 Barrackpore Trunk Road\\
Kolkata 700108, West Bengal, India}
\email{ghosh.a1905@gmail.com}
\gdef\@copyrightnote{}
\begin{document}

\begin{abstract}
We study the ergodic properties of random walks in stationary ergodic environments without uniform ellipticity under a minimal assumption. There are two main components in our work. The first step is to adopt the arguments of Lawler, utilizing a more general definition of environments via environment functions, to deduce an invariance principle for balanced environments under some natural assumptions. Secondly, we transfer these results to general ergodic environments using a control technique to derive a measure under which the local process is stationary and ergodic. Two important consequences of our results are the Law of Large Numbers and the Invariance Principle.
\end{abstract}
\maketitle
\pagestyle{plain}
\section{Introduction}
The purpose of the discussion is to derive Ergodic Theorems and related results in the context of Random Walk in Random Environments (RWRE). This shall help us to mainly solve the problem of Random Walk in Random Environments regarding the Law of Large Numbers.

Roughly speaking, the first one says that there is a deterministic limiting velocity of the walk. This has been studied in detail in  $d=1$ by Solomon\cite{solomon} in 1975. In higher dimensions, specifically in $d\geq3$, there is a lack of knowledge on both the existence and the explicit form of a Law of Large Numbers, which we shall answer in our setting.

Informally speaking, the problem regarding the Law of Large Numbers in RWRE, for stationary ergodic environments, is the following:
\begin{conjecture}\label{thm:LLN}
Does there exist a deterministic vector $v$ such that \[
\frac{X_n}{n} \to v
\]  for almost every realisation of the environment? 
\end{conjecture}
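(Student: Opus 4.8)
The plan is to establish the asserted Law of Large Numbers through the classical device of the \emph{environment viewed from the particle}. Write $\omega_n:=\tau_{X_n}\omega$ for the environment recentred at the walker's current position; then $(\omega_n)_{n\ge 0}$ is a Markov chain on the space $\Omega$ of environments, with transition operator $\Pi f(\omega)=\sum_{x}\omega(0,x)\,f(\tau_x\omega)$. With $d(\omega):=\sum_{x}x\,\omega(0,x)$ the local drift, the increments split as $X_{n+1}-X_n=d(\omega_n)+\big(X_{n+1}-X_n-d(\omega_n)\big)$, where the bracketed terms form a martingale-difference sequence for the natural filtration, so that
\[
\frac{X_n}{n}=\frac{1}{n}\sum_{k=0}^{n-1}d(\omega_k)+\frac{M_n}{n},\qquad M_n:=\sum_{k=0}^{n-1}\big(X_{k+1}-X_k-d(\omega_k)\big).
\]
Suppose one can exhibit a probability measure $\mathbb{Q}$ on $\Omega$ that is invariant and ergodic for $\Pi$ and \emph{mutually absolutely continuous} with the environment law $\mathbb{P}$. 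Under the annealed measure built from $\mathbb{Q}$ the sequence $(\omega_k)$ is then stationary and ergodic, so Birkhoff's theorem gives $\tfrac1n\sum_{k<n}d(\omega_k)\to v:=\mathbb{E}_{\mathbb{Q}}[d]$ almost surely; since $\mathbb{P}\ll\mathbb{Q}$ this persists for $\mathbb{P}$-almost every environment, while the moment bound furnished by the minimal assumption makes $M_n/n\to 0$ by a martingale strong law. This gives $X_n/n\to v$ with the explicit expression $v=\mathbb{E}_{\mathbb{Q}}[d]$. Ergodicity of $\mathbb{Q}$ for $\Pi$ is in turn read off, by a standard argument, from shift-ergodicity of $\mathbb{P}$ and irreducibility of the walk.

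Everything thus reduces to constructing $\mathbb{Q}$, which I would do in the two stages indicated in the abstract. For \emph{balanced} environments — those with $d\equiv 0$ — I would transcribe Lawler's argument into the language of environment functions, so that no uniform ellipticity is used: periodise the environment on the torus $(\mathbb{Z}/N\mathbb{Z})^d$, where the periodised chain carries an invariant density $\phi_N$ against the periodised $\mathbb{P}$; balancedness renders the chain's generator essentially symmetric and yields an a priori bound on $\phi_N$ in $L^p$ (or in entropy) uniform in $N$; a weak-compactness argument extracts an invariant density $\phi$ on $\Omega$, and the minimal assumption is exactly what forces $\phi>0$ $\mathbb{P}$-almost surely, giving an equivalent invariant measure $\mathbb{Q}\sim\mathbb{P}$ with density $\phi$. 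Here $d\equiv 0$, so the formula returns $v=0$, in agreement with $X_n$ itself being a martingale.

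To pass from balanced to \emph{general} stationary ergodic environments I would invoke the control technique: realise the given walk as a time/space change of a walk in an auxiliary balanced environment for which the previous step already supplies an equivalent invariant measure, or, dually, run the environment chain and take $\mathbb{Q}$ to be a limit point of its occupation measures between regeneration-type events, the ``control'' being precisely the estimate that keeps these occupation measures tight. One then has to verify that the $\mathbb{Q}$ so produced is still equivalent to $\mathbb{P}$, ergodic for $\Pi$, and that the limiting velocity is independent of the auxiliary choices. I expect this to be the main obstacle. Without uniform ellipticity the environment chain may linger for long stretches in atypical regions, so a priori the natural invariant measure can acquire a singular component and the Cesàro averages can escape to a $\mathbb{P}$-null set of ``slow'' environments; the heart of the proof is to show that the minimal assumption rules this out — that no mass leaks to degenerate sub-environments and that the limiting local process is genuinely stationary and ergodic, not merely tight. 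Once this is in place, the martingale decomposition, the martingale strong law, and the transfer of Birkhoff's limit across $\mathbb{Q}\sim\mathbb{P}$ are all routine.
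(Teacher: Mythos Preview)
Your high-level architecture --- the martingale/drift decomposition of $X_n/n$, Birkhoff's theorem applied to the drift averages, the martingale strong law killing $M_n/n$, and the transfer of the almost-sure statement via mutual absolute continuity --- is exactly the argument the paper runs in its proof of Theorem~\ref{Master2}(i). The periodisation-and-compactness route to an invariant measure in the balanced case is also what the paper does (Lemma~\ref{3} bounds a resolvent via a Monge--Amp\`ere estimate, then duality gives the $L^{p/(p-1)}$ bound on the periodised density; note this is not a symmetry argument, so your remark that ``balancedness renders the chain's generator essentially symmetric'' is off --- the mechanism is the Aleksandrov--Bakelman--Pucci-type inequality, not reversibility).

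Where you diverge from the paper, and where there is a real gap, is in aiming for a $\mathbb{Q}$ invariant and ergodic for the \emph{full} environment process $(\tau_{X_n}\omega)_{n\ge 0}$. The paper explicitly warns that this chain need not carry an equivalent invariant measure in general (it cites Sabot's Dirichlet environments), and works instead with the \emph{local process} $\pi_0(\tau_{X_n}\omega)$, the projection of the environment at the origin. This suffices because $d(\omega)$ is a function of $\pi_0(\omega)$ alone, so Birkhoff only needs stationarity and ergodicity of the projected sequence. Your plan, as stated, may therefore fail at the step ``verify that the $\mathbb{Q}$ so produced is \ldots\ ergodic for $\Pi$'': you are asking for more than is available and more than is needed.

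Your description of the control step is also too vague to be a plan. The paper's device is concrete: form the \emph{embedded} balanced environment $\Lambda_\omega(x,v)=\tfrac12(\omega(x,v)+\omega(x,-v))$, for which the balanced theory supplies the invariant measure $\lambda'$; then realise the $\Lambda$-walk as a Bernoulli mixture (with i.i.d.\ $\gamma_n\sim\mathrm{Ber}(1/2)$) of the original walk and its reflection $\alpha$. The key identity (Lemma~\ref{main**}) shows that, upon averaging over the $\gamma_n$, the finite-dimensional distributions of the local process under the original walk coincide with those under the $\Lambda$-walk; hence $\lambda'$ makes the \emph{local} process stationary, and a Bolthausen--Sznitman martingale argument (Lemma~\ref{equivalence}) upgrades this to ergodicity. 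Neither a time/space change nor an occupation-measure tightness argument appears; the reflection symmetry $\alpha$ and the exact $\tfrac12$--$\tfrac12$ coupling are what make the cancellation work.
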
 

Varadhan in \cite{v} showed that the sequence $X_n/n$ has at most two deterministic limit points $v_1,v_2$ and if $v_1 \neq v_2$ then there is a constant $a\geq0$ such that $v_1 = -av_2$. Furthermore, Noam Berger in \cite{ber} showed that for $d\geq5$ if $v_1 \neq v_2$ at least one of $v_1,v_2$ is $0$. These results assume that the environment is \textit{uniformly elliptic and i.i.d.} In our present discussion, we answer \ref{thm:LLN} in the affirmative for \textit{stationary ergodic} nearest neighbour RWRE and show the existence of the limiting velocity under some assumptions.    

The 0-1 conjecture (which is a famous one in RWRE theory), first stated (for two dimensions) by Stephen Kalikow in~\cite{kalikow1981generalized} for uniformly elliptic i.i.d or more generally for uniformly elliptic ergodic environments, says that given a direction, the walk either escapes off or does not for almost all realisations of the environment. More formally \begin{conjecture}~\label{Kaliku}
    Let $\ell \in S^{d-1}$. Let $A_{\ell} = \{ X_n.\ell \to +\infty\}$. Then \[
    \mathbb{P}[A_{\ell}]\in \{0,1\}
    \]
\end{conjecture}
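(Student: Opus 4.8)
The plan is to obtain the $0$--$1$ law as a consequence of the Law of Large Numbers established above, treating the generic directions and the single degenerate one separately. Write $v$ for the deterministic limiting velocity, so that $X_n/n\to v$ holds $\mathbb{P}$-almost surely and, for $\mathbb{P}$-almost every environment, under the quenched law $P_\omega$ as well. If $v\cdot\ell\neq0$, then $X_n\cdot\ell/n\to v\cdot\ell\neq0$ forces $X_n\cdot\ell\to(\operatorname{sgn}(v\cdot\ell))\,\infty$, so that $\mathbb{P}[A_\ell]=\mathbf{1}_{\{v\cdot\ell>0\}}\in\{0,1\}$; in fact this already yields the stronger quenched statement $P_\omega[A_\ell]\in\{0,1\}$ for a.e.\ $\omega$. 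Everything therefore reduces to the degenerate case $v\cdot\ell=0$.

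For that case I would work with the environment viewed from the particle, which by the control technique of the preceding sections is stationary and ergodic under a measure $\mathbb{Q}$ equivalent to the law of the environment. The first step is to show that $D_\ell:=A_\ell\cup A_{-\ell}$ has probability $0$ or $1$: writing $q(\omega)=P_\omega[D_\ell]$ and decomposing a trajectory at the successive records of $n\mapsto X_n\cdot\ell$, one checks that $q$ is invariant under the dynamics of the environment process, hence $\mathbb{Q}$-almost surely equal to a constant $c$, and a routine Markov-property argument forces $c\in\{0,1\}$. The second step is that $A_\ell$ and $A_{-\ell}$ cannot both carry positive probability when $v\cdot\ell=0$: conditioning $\mathbb{Q}$ on the invariant event on which the walk escapes in direction $\ell$ and reapplying the LLN there, one sees that on $A_\ell$ the speed of $X_n\cdot\ell$ is $\ge0$; but the diffusive scaling of $X_n\cdot\ell$ supplied by the balanced-environment invariance principle — once transported to $\mathbb{Q}$ — forces $X_n\cdot\ell$ to change sign infinitely often when the limiting drift in direction $\ell$ vanishes, which is incompatible with $X_n\cdot\ell\to+\infty$ at zero linear speed. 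Hence on $D_\ell$ exactly one of $A_\ell,A_{-\ell}$ carries the full conditional mass, and combining the two steps gives $\mathbb{P}[A_\ell]\in\{0,1\}$ in every case.

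The crux is the degenerate direction. The LLN is silent there, and $A_\ell$ is not literally invariant for the environment process, because the walk may backtrack arbitrarily far in direction $-\ell$ before escaping; turning it into an invariant event requires a careful last-exit/record decomposition. Since uniform ellipticity has been discarded, the usual i.i.d.\ arguments that bound these backtracking excursions by ellipticity constants are unavailable, and controlling them using only the minimal assumption together with the transfer measure $\mathbb{Q}$ constructed earlier is the delicate point on which the argument hinges. The balanced case is comparatively painless, since there the martingale structure of the environment function delivers the required oscillation estimates directly, so the overall strategy mirrors the proof of the LLN itself: establish the statement for balanced environments and then push it through to general ergodic environments via $\mathbb{Q}$.
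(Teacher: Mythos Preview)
The paper does not prove this statement. It is stated as Conjecture~\ref{Kaliku} and the surrounding text says explicitly: ``we expect, in view of our results, that this conjecture is also affirmative under our assumptions, but it should require a rigorous proof, which we are unable to provide.'' So there is no proof in the paper to compare against; the question is whether your outline closes the gap the authors left open.

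Your treatment of the non-degenerate case $v\cdot\ell\neq 0$ is fine and is exactly why the authors say they \emph{expect} the conjecture to hold: once the LLN is available, those directions are trivial.

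The degenerate case is where your outline does not go through, and for a reason specific to this paper. You write that ``the environment viewed from the particle \ldots\ is stationary and ergodic under a measure $\mathbb{Q}$'' and then use this to conclude that $q(\omega)=P_\omega[D_\ell]$ is $\mathbb{Q}$-a.s.\ constant. But Theorem~\ref{Master} does \emph{not} give ergodicity of the environment process $\{\tau_{X_\omega(m)}\omega\}_{m\ge0}$ for general (unbalanced) environments; it gives stationarity and ergodicity only of the \emph{local process} $\{\pi_0(\tau_{X_\omega(m)}\omega)\}_{m\ge0}$, i.e.\ the sequence $\omega(X_m)$. The paper is careful to make this distinction (see the introduction, where the local process is introduced precisely because ``the environment process does not always possess suitable ergodic properties''). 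Since $q(\omega)=P_\omega[D_\ell]$ depends on the whole environment and not just on $\omega(0)$, you cannot read it off as a functional of the local process, and the constancy step fails. Your second step has the same problem: conditioning $\mathbb{Q}$ on an invariant event of the environment process presupposes the very ergodicity that is missing. The invariance principle you invoke is Theorem~\ref{Master2}(ii), which is for the compensated martingale $X_\omega([nt])-\sum_m d(0,\tau_{X_\omega(m)}\omega)$; when $v\cdot\ell=0$ you still need to show the projected compensator is $o(\sqrt{n})$, and that is again a statement about ergodic averages of a function of the full shifted environment, not of the local process. You flag ``the delicate point on which the argument hinges'' yourself; what I am pointing out is that the tool you are reaching for to resolve it is not actually supplied by the paper.
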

Prior work on this problem, notably in two dimensions, by Martin Zerner and Franz Merkl~\cite{merkl2001zero}, has solved this for i.i.d. environments and has counterexamples for elliptic, ergodic environments. Further counterexamples exist in $d \geq 3$ in~\cite{bramson2006shortest} for stationary environments under a \textit{polynomial mixing} condition, which is not equivalent to ergodic environments (it is not difficult to find counterexamples). The proofs crucially rely on invalidating the law of large numbers, so we expect, in view of our results, that this conjecture is also affirmative under our assumptions, but it should require a rigorous proof, which we are unable to provide.

We shall work under a more general definition of environments using~\textit{Environment Function}, which we shall introduce below. We shall establish under some minimal assumption that this chain has a stationary measure for~\textit{balanced environments}, adapting arguments from~\cite{Lawler1982} for any environment function. This shall also provide an easier and more natural proof of Theorem 1 of~\cite{guo2012quenched}. This shall be derived by adapting the arguments from~\cite{Lawler1982}. 

Our other main contribution will be to deduce the ergodicity of a certain stochastic process, which we will call the \textit{local process}. This process is the projection at the origin of the ``Environment Viewed from the Particle'' Markov Chain, or, as we will call it in this paper, the~\textit{environment process} with a suitable environment function. We consider this process since it is sufficient for our purposes of deriving the law of large numbers and also in view of the fact that the~\textit{environment process} does not always possess suitable ergodic properties. This can be noted for Random Walks in Dirichlet Environments as studied by Christopher Sabot in~\cite{11}. 

Our definition of environments allows us to transfer our results from the balanced case to the general environment by coupling the random walk and a reflected walk. We then combine a novel technique, which we shall name as the~\textit{Action of Linear Transformations}, and another control technique to execute a proof of the law of large numbers.

As a consequence of our ergodic theorem, we shall also be able to deduce an invariance principle for general environments. 

Our results hold for all uniformly elliptic RWRE in dimensions equal to or greater than one, and the conditions we impose show the extent to which the condition of uniform ellipticity can be dropped.
\subsection*{Acknowledgements:} The author is an undergraduate student at the Indian Statistical Institute, Kolkata. This work was completed during the third year of his studies. He is deeply indebted to Dr.~Soumendu Sundar Mukherjee of the Theoretical Statistics and Mathematics Unit, Indian Statistical Institute, Kolkata, for immensely helpful discussions and his substantial help with the manuscript. The author also thanks his classmates Himadri Mandal and Aryama Ghosh for their helpful comments on the exposition and the manuscript in general. \\
\section{Prerequisites}~\label{preq}

Before stating the main results, we formally define the setting we work under. We adopt the formalism as in~\cite{Lawler1982}. 
 
Define by $\mathbb{Z}^d$ the integer lattice and $\{e_i\}_{i=1,\dots,d}$ the unit vectors. Define the set \[
V_d = \{ \pm e_i : i=1,\dots ,d\} \cup \{0\}.
\]

Define the following sets:
\begin{align*}
S(d) 
&= \{ (p_0,p_1,\dots ,p_{2d}) : p_i \ge 0,\ p_0 + \sum_{i=1}^{2d} p_i = 1 \} \\
\mathscr{S}(d) 
&= \{ \text{set of functions from } \mathbb{Z}^d \to S(d) \}.
\end{align*}
Let $\mathcal{B}(d)$, $\mathcal{B}(d)'$ be the Borel-$\sigma$ algebra on $S(d)$, $\mathscr{S}(d)$ respectively.

Consider a measurable function $\mathscr{E}:S(d) \to S(d)$. Call the environment function, the natural extension of this function on $\mathscr{S}(d)$ given by \[
\mathscr{E}(\omega)(x) = \mathscr{E}(\omega(x)).
\]
We shall call such functions the environment function. 

This function essentially contains the information on the transition probabilities of the random walk based on the function $\omega$. This is essentially a mild but powerful generalisation of the environment, as we shall see. 
\begin{definition}[Environment]
    Given an environment function $\mathscr{E}$, $\omega \in \mathscr{S}(d)$, we call the pair $(\omega, \mathscr{E})$ as the environment. 
\end{definition}
Let $\mu$ be a probability measure on $(\mathscr{S}(d), \mathcal{B}(d)')$. We call the environment function balanced if \[
\forall x \in \mathbb{Z}^d,\quad \mathscr{E}_i(\omega)(x) = \mathscr{E}_{i+d}(\omega)(x)\quad i=1,2,\dots,d.
\]
We shall call the environment elliptic if \[
\mu[\mathscr{E}_k(\omega)(x)>0: \forall x \in \mathbb{Z}^d, \forall k]=1.
\]
Define the action of the group of shifts $\{\tau_x\}_{x \in \mathbb{Z}^d}$ on $\omega\in \mathscr{S}(d)$ given by \[
(\tau_x \omega)(y) = \omega(x+y), \quad \forall x,y \in \mathbb{Z}^d. 
\] 
One notation we shall use interchangeably is the following: 
\begin{notation}
    For $\mathscr{E}(\omega) \in \mathscr{S}(d)$, $v \in V_d$, $\mathscr{E}$ an environment function
    \begin{align*}
    \mathscr{E}(\omega)(x,v) 
    &= \mathscr{E}(x,v)= \mathscr{E}_i(\omega)(x)\quad\text{if}~v=e_i \\
    \mathscr{E}(\omega)(x,-v)
    &=\mathscr{E}(x,v)=\mathscr{E}_{i+d}(\omega)(x)\quad\text{if}~v=-e_i\\
    \mathscr{E}(\omega)(x,v)
    &=\mathscr{E}(x,v)=\mathscr{E}_0(\omega)\quad\text{if}~v=0.
    \end{align*}
\end{notation}
We shall assume $\mu$ to be stationary and ergodic with respect to the group of shifts $\tau_x$. More precisely, \begin{align*}
&\mu[\tau_{-x}\omega \in A] = \mu[\omega\in A] \quad \forall A \in \mathcal{B}(d)', \forall x \in \mathbb{Z}^d, \\
&\forall B \in \mathcal{B}(d)',\quad \mu\left[ \cap_{x \in \mathbb{Z}^d} \tau_x B\right] \in \{0,1\}.
\end{align*}
As a special case, $\mu$ is said to be an independent and identically distributed (i.i.d.) law if $\{\omega(x)\}_{x \in \mathbb{Z}^d}$ is an i.i.d. sequence of random variables under the measure $\mu$. One could think of this as independently assigning a point from $S(d)$ at every site. 

Fix $\omega \in \mathscr{S}(d)$ and an environment function $\mathscr{E}$. Define the Markov Chain of the Random Walk $\{X_{\mathscr{E}(\omega)}(k) \}_{k\geq 0} $ in $\mathbb{Z}^d$ by the transition probabilities, \begin{align*}
\mathbb{P}[X_{\mathscr{E}(\omega)} (k+1) 
&= x+ v | X_{\mathscr{E}(\omega)}(k) = x] = \mathscr{E}(x,v) \quad\forall v \in V_d
.\end{align*}
Therefore, the kernel of this Markov Chain is given by \begin{align*}
L_{\mathscr{E}(\omega)}(g(x)) = \sum_{v \in V_d} \mathscr{E}(\omega)(x,v) g(x+v),
\end{align*}
respectively for every $g: \mathbb{Z}^d \to \mathbb{R}$. 

Define the environment process as the Markov Chain given by  $\{\tau_{X_{\mathscr{E}(\omega)}(k)} \omega \}_{k \geq 0}$ on $(\mathscr{S}(d), \mathcal{B}(d)')$  with the transition kernel given by \[
K_d^{\mathscr{E}}( \omega , B) =\sum_{v \in V_d} {\mathscr{E}(\omega)}(o,v) \delta_{\tau_v \omega}(B)  \quad\forall B \in \mathcal{B}(d)'.
\]
We also define a projection operator $\pi_E$ for $E\subset \mathbb{Z}^d$,
\[
\pi_E(\omega) = \{ \omega(x): x \in E \}.
\]
 
Denote by $\mathbb{P}^{\mathscr{E}({\omega})}_x$: the law on path of the random walk started at $x$ in the environment $(\omega, \mathscr{E})$. Precisely, $\mathbb{P}^{{\mathscr{E}(\omega)}}_x$ is a measure on $((\mathbb{Z}^d)^\mathbb{N}, \mathcal{G)}$ where $\mathcal{G}$ is the $\sigma$-algebra generated by cylinder sets. 
 
Therefore if ${\mathscr{E}(\omega)} \in \mathscr{S}(d)$ one has that \[
\mathbb{P}^{{\mathscr{E}(\omega)}}_x(G): \mathscr{S}(d) \to [0,1]
\]
is measurable $\forall G \in \mathcal{G}$. Hence, we can define the annealed law to be the probability measure on $(\mathscr{S}(d) \times (\mathbb{Z}^d)^{\mathbb{N}}, \mathcal{B}(d)' \times \mathcal{G)}$ as \[
\mathbb{P}^{ann}_x(F \times G) = \int_{F} \mathbb{P}_x^{{\mathscr{E}(\omega)}}(G) d\mu(\omega) \quad\forall F \in \mathcal{B}(d)' , G \in \mathcal{G} .
\]
Denote the annealed law as $\mu^x =\mu \rtimes {\mathbb{P}}_x^{{\mathscr{E}(\omega)}}$. 

Also, we shall define the drift of an environment at $x \in \mathbb{Z}^d$ as \[
d_{\mathscr{E}}(x, \omega) = E[X_{{\mathscr{E}(\omega)}}(1) - x | X_{{\mathscr{E}(\omega)}}(0) =x].
\]
We make the following assumptions about the environment that is
\assumption{}~\label{assump:1} Let $\mu$ be a stationary ergodic law on $(\mathscr{S}(d), \mathcal{B}(d)')$. Since we aim to drop uniform ellipticity, i.e., all transition probabilities bounded below by some constant positive value, we define the quantity $c(\mathscr{E}, x)$(we may drop $\omega$ from the notation) as follows\[
c(\mathscr{E},x) = \Big[ \prod_{v \in V_d\setminus\{0\}} \mathscr{E}(\omega)(x,v)\Big] ^{\frac{1}{2d}}.
\]
Then we assume that \[
\int_{\mathscr{S}(d)} c^{-p} d\mu <\infty,
\] 
for some $p >d$. 
\assumption{}~\label{assump:2} 
Environment, or the balanced environment, is elliptic. 

In our proofs we use the notation $|x|_1$ to denote the $L_1$ norm with respect to the counting measure in $\mathbb{R}^k$ given by \[
|x|_1 = \sum_{j=1}^k |x|_j. 
\] 
We use the notion of concave functions defined on the compact $L_1$ ball $B_1[0,r] \cap \mathbb{Z}^d$. A function $z: B_1[0,r] \cap \mathbb{Z}^d \to \mathbb{R}_{\geq0}$~$(r>0)$ is said to be concave if \[
z(x+ v) +z(x-v) -2z(x) \leq 0\quad x \in \operatorname{int} B_1[0,r] \cap \mathbb{Z}^d \quad \forall v \in V_d\setminus \{0\},
\]
where $\operatorname{int}$ denotes the topological interior of a set. 

We also define an operator on concave functions, which we shall call the~\textit{Monge-Ampère Operator}. For a concave function $z$ we define the operator $M$ as\[
Mz(x) = \prod_{i=1}^d [z(x+e_i)+z(x-e_i) -2z(x)].
\]
Also, note that we use the notion of mutual absolute continuity of two measures. 
\begin{definition}
    Two measures $\mu, \nu$ on a measure space $(\Omega, \mathcal{F})$ are said to be mutually absolutely continuous if \[
    \forall A \in \mathcal{F}\quad\mu(A)=0\iff \nu(A)=0.
    \]
\end{definition}

We shall also use the notion of the quotient group, that is, given a group $G$ and an equivalence relation $R$ on it, we define the quotient group G' as \[
G' = G/R.
\]
This is the group of all equivalence classes under the relation $R$. We shall denote the equivalence class of $g \in G$ by $[g]_R$. 
\section{Main Results and Discussions}
We first state a result on balanced environments. This result forms the backbone for our control argument as we shall see.
\begin{theorem}~\label{Lawler}
Let $d\geq 1$. Assume $\mu$ is a stationary, ergodic Law on $(\mathscr{S}(d), \mathcal{B}(d)')$ with $\mathscr{E}$ a measurable balanced environment function satisfying assumptions~\ref{assump:1} and~\ref{assump:2}.

Then there exists a stationary ergodic law on $(\mathscr{S}(d), \mathcal{B}(d)')$ such that the environment process $\{\tau_{X_{\mathscr{E}(\omega)}(k)} \omega \}_{k \geq 0}$ is stationary, ergodic under this measure and is mutually absolutely continuous to $\mu$.
\end{theorem}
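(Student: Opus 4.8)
The plan is to follow the classical strategy of Lawler (adapted to the environment-function setting) for constructing an invariant measure for the environment process in the balanced case, and then to upgrade stationarity to ergodicity and mutual absolute continuity. First I would recall that when $\mathscr{E}$ is balanced, the random walk $\{X_{\mathscr{E}(\omega)}(k)\}$ is a martingale under the quenched law $\mathbb{P}^{\mathscr{E}(\omega)}_o$, since the drift $d_{\mathscr{E}}(x,\omega)=\sum_{i=1}^d (\mathscr{E}_i(\omega)(x)-\mathscr{E}_{i+d}(\omega)(x))e_i=0$. The key analytic input is a discrete maximum-principle / Monge--Amp\`ere estimate: for a nonnegative concave function $z$ on $B_1[0,r]\cap\mathbb{Z}^d$ one controls $\max z$ in terms of $(\sum_x Mz(x))^{1/d}$ weighted by the ellipticity constants $c(\mathscr{E},x)$; this is exactly where Assumption~\ref{assump:1} (the $L^p$ bound on $c^{-1}$ for some $p>d$) enters, replacing uniform ellipticity. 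I would use this to show that the expected occupation measures of the walk are tight after normalisation, so that Ces\`aro averages of $K_d^{\mathscr{E}}(\omega,\cdot)$ along the path have a weak subsequential limit.

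Next, to produce the invariant measure I would consider, for the walk started from the stationary environment $\mu$, the sequence of measures $\nu_n(B)=\frac1n\sum_{k=0}^{n-1}\mathbb{P}^{ann}_o[\tau_{X_{\mathscr{E}(\omega)}(k)}\omega\in B]$ on $(\mathscr{S}(d),\mathcal{B}(d)')$, and extract a weak limit $\nu$; by construction $\nu$ is invariant for the kernel $K_d^{\mathscr{E}}$, i.e. the environment process is stationary under $\nu$. The balancedness (martingale property) together with the concavity/Monge--Amp\`ere control forces the Radon--Nikodym derivative $f=d\nu/d\mu$ to exist and to be bounded away from $0$ and $\infty$ $\mu$-a.s.: concretely one shows $\int \log f\, d\mu$ is finite by comparing $f$ with products of the local ellipticity quantities and invoking the integrability in Assumption~\ref{assump:1}, which yields $\nu\ll\mu\ll\nu$, i.e. mutual absolute continuity. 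Assumption~\ref{assump:2} (ellipticity) guarantees the derivative is strictly positive everywhere, so no null sets are created or destroyed.

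Finally, for ergodicity of the environment process under $\nu$ I would argue that any bounded $K_d^{\mathscr{E}}$-invariant function $h$ is, by the martingale property of the balanced walk, harmonic for the quenched generator $L_{\mathscr{E}(\omega)}$ along $\mu$-a.e.\ environment; combining this with stationarity of $\mu$ under the shifts $\tau_x$ and the ergodicity of $\mu$, a sublinear-growth argument (again powered by the concavity estimate applied to $x\mapsto h(\tau_x\omega)$, which is both sub- and super-harmonic hence affine, hence constant by stationarity) shows $h$ is $\mu$-a.s.\ constant; mutual absolute continuity then transfers this to $\nu$. The resulting law $\nu$ is shift-stationary because it is built from the shift-invariant $\mu$, and shift-ergodic because it is mutually absolutely continuous to the ergodic $\mu$. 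The main obstacle I expect is the non-uniform-ellipticity bookkeeping in the Monge--Amp\`ere / maximum-principle step: one must carefully track how the random weights $c(\mathscr{E},x)^{-1}$ accumulate along the walk and verify that the $p>d$ integrability is exactly enough (via a H\"older argument over the $O(r^d)$ lattice points in $B_1[0,r]$) to keep the occupation measures tight and the Radon--Nikodym derivative log-integrable; everything else is a fairly mechanical adaptation of Lawler's scheme to the environment-function formalism.
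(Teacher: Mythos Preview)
Your outline misses the core mechanism by which absolute continuity is obtained, and what you put in its place does not work. The paper does not form Ces\`aro time averages of the annealed environment process on $\mathbb{Z}^d$; instead it \emph{periodises}. For each $n$ one restricts to the torus $T_n=\{-n+1,\dots,n\}^d$, where the quenched walk has a genuine invariant density $\phi_n(x)/(2n)^d$, and sets $\lambda_n=\sum_{x\in T_n}\delta_{\tau_x\omega_n}\phi_n(x)/(2n)^d$. The Monge--Amp\`ere estimate is used not to control occupation measures directly but to prove a resolvent bound (Lemma~\ref{3}): $\|R_n^\omega g\|_\infty\le c_1 n^2\|g/c(\mathscr{E},\cdot)\|_d$. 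By duality $R_n^*:l^1(T_n)\to l^{d/(d-1)}(T_n)$, and since $\phi_n$ is an eigenvector of $R_n^*$ with eigenvalue $n^2$, one gets a uniform $l^{p/(p-1)}$ bound on $\phi_n$ after a H\"older step using exactly $p>d$. Then Dunford--Pettis plus the multidimensional ergodic theorem for $\mu_n\to\mu$ yields $\lambda\ll\mu$ with $d\lambda/d\mu\in L^{p/(p-1)}$; mutual absolute continuity and ergodicity follow from Bolthausen--Sznitman.

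Your substitute for this step---``show $\int\log f\,d\mu$ is finite by comparing $f$ with products of the local ellipticity quantities''---is not a proof of absolute continuity: finiteness of $\int\log f\,d\mu$ presupposes that $f=d\nu/d\mu$ exists, which is precisely what must be established. A weak limit of your $\nu_n$ can a priori be singular to $\mu$; tightness of occupation measures says nothing about densities. You also overclaim that $f$ is bounded away from $0$ and $\infty$: the paper only gets $f\in L^{p/(p-1)}(\mu)$, and no pointwise bounds are available without uniform ellipticity. Finally, your ergodicity sketch (``$x\mapsto h(\tau_x\omega)$ is sub- and super-harmonic hence affine hence constant'') is not the argument used; the paper simply invokes the standard fact that a $K_d^{\mathscr{E}}$-invariant measure absolutely continuous with respect to an ergodic $\mu$ is itself ergodic and equivalent to $\mu$. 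The place to invest effort is the periodisation--resolvent--duality chain; the rest is routine once Lemma~\ref{3} is in hand.
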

An important corollary of this result, which we shall also state but is not directly related to our discussion, is 
\begin{corollary}~\label{QIP}
Let $d\geq1$. Let $\mu$ be a stationary ergodic law on $(\mathscr{S}(d), \mathcal{B}(d)')$ and $\mathscr{E}$ a measurable balanced environment function. Fix $\omega \in \mathscr{S}(d)$. Then under assumptions~\ref{assump:1} and ~\ref{assump:2}, \[
\frac{X_{\mathscr{E}(\omega)}([nt])}{\sqrt{n}}
\]
converges weakly to the standard Brownian Motion on $\mathbb{R}^d$ with non degenerate covariance matrix of the form $(b_i \delta_{ij})_{1\leq i,j\leq d } $ for $\omega$~$\mu$\text{-a.e.} 
\end{corollary}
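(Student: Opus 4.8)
\emph{Proof proposal.} The plan is to combine Theorem~\ref{Lawler} with the martingale structure that the balanced condition imposes on the walk. First, note that for a balanced environment function the quenched drift vanishes identically: since $\mathscr{E}_i(\omega)(x)=\mathscr{E}_{i+d}(\omega)(x)$ for all $x$, one has $d_{\mathscr{E}}(x,\omega)=\sum_{i=1}^d e_i\bigl(\mathscr{E}_i(\omega)(x)-\mathscr{E}_{i+d}(\omega)(x)\bigr)=0$, so each coordinate $M^{(i)}_n:=e_i\cdot X_{\mathscr{E}(\omega)}(n)$ is a quenched martingale for the filtration generated by the walk, with increments bounded by $1$. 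Because the walk is nearest neighbour, at any step at most one coordinate changes, so $(M^{(i)}_{k+1}-M^{(i)}_k)(M^{(j)}_{k+1}-M^{(j)}_k)=0$ pathwise for $i\neq j$; hence the predictable cross-variations vanish, $\langle M^{(i)},M^{(j)}\rangle\equiv 0$ for $i\neq j$, while a one-step variance computation gives
\[
\langle M^{(i)}\rangle_n=\sum_{k=0}^{n-1} 2\,\mathscr{E}_i\bigl(\tau_{X_{\mathscr{E}(\omega)}(k)}\omega\bigr)(o).
\]

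Secondly, let $\bar\mu$ be the stationary ergodic law on $(\mathscr{S}(d),\mathcal{B}(d)')$ supplied by Theorem~\ref{Lawler}, under which the environment process $\{\tau_{X_{\mathscr{E}(\omega)}(k)}\omega\}_{k\ge 0}$ is stationary and ergodic and which is mutually absolutely continuous to $\mu$. The map $\omega\mapsto 2\mathscr{E}_i(\omega)(o)$ is bounded, hence $\bar\mu$-integrable, so Birkhoff's ergodic theorem applied to the environment process yields, for $\bar\mu$-a.e.\ $\omega$ and $\mathbb{P}^{\mathscr{E}(\omega)}_o$-a.s.,
\[
\frac{1}{n}\langle M^{(i)}\rangle_{[nt]}\;\xrightarrow[n\to\infty]{}\;b_i\,t,\qquad b_i:=2\,E_{\bar\mu}\bigl[\mathscr{E}_i(\cdot)(o)\bigr].
\]
By Assumption~\ref{assump:2} the integrand $\mathscr{E}_i(\cdot)(o)$ is strictly positive $\mu$-a.e., hence $\bar\mu$-a.e., so $b_i>0$ for every $i$ and the matrix $(b_i\delta_{ij})$ is non-degenerate. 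Since $\bar\mu$ and $\mu$ have the same null sets, all of the above holds for $\mu$-a.e.\ $\omega$.

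Thirdly, I would invoke the functional martingale central limit theorem for each fixed $\omega$ in this full-measure set. The increments $|M^{(i)}_{k+1}-M^{(i)}_k|\le 1$ are uniformly bounded, so the conditional Lindeberg condition is automatic (for large $n$ the increments are $<\varepsilon\sqrt n$), the conditional quadratic variation converges to the deterministic clock $b_i t$ as just shown, and tightness in path space follows from $\langle M^{(i)}\rangle_n\le 2n$; the vanishing cross-variations give joint convergence of the $d$ coordinates to independent limits. Hence under $\mathbb{P}^{\mathscr{E}(\omega)}_o$ the rescaled process $X_{\mathscr{E}(\omega)}([n\cdot])/\sqrt n$ converges weakly in $D([0,\infty);\mathbb{R}^d)$ — equivalently, since the limit is continuous, in $C([0,\infty);\mathbb{R}^d)$ — to a Brownian motion with covariance $(b_i\delta_{ij})_{1\le i,j\le d}$, for $\mu$-a.e.\ $\omega$, which is the assertion.

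Granting Theorem~\ref{Lawler}, the only genuinely load-bearing point is the quenched convergence of the conditional quadratic variation in the second step, and this is precisely where the mutual absolute continuity of $\bar\mu$ and $\mu$ is indispensable: Birkhoff's theorem for the environment process only produces a statement for $\bar\mu$-a.e.\ starting environment, and we need it for $\mu$-a.e.\ $\omega$. (Were one to bypass Theorem~\ref{Lawler}, controlling the long-time occupation measure of the walk without uniform ellipticity would be the hard analytic core; Assumption~\ref{assump:1} is exactly what makes Theorem~\ref{Lawler} available.) It is worth remarking that the central-limit step itself is insensitive to the failure of uniform ellipticity, since the increments are bounded by $1$ irrespective of the environment.
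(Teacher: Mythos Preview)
Your proposal is correct and follows essentially the same route as the paper: exploit the balanced condition to see that $X_{\mathscr{E}(\omega)}$ is a martingale with diagonal conditional covariance $2\mathscr{E}_i(\tau_{X_{\mathscr{E}(\omega)}(k)}\omega)(o)$, invoke Theorem~\ref{Lawler} to obtain the mutually absolutely continuous invariant law under which the environment process is stationary ergodic, apply Birkhoff to get $n^{-1}\langle M^{(i)}\rangle_n\to b_i$, and conclude via the martingale functional CLT. The paper organises the last step slightly differently (normalising by $\sqrt{V_n^i}$ and then identifying the time change), but the content is the same.
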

These results notably weaken the assumptions of ellipticity and derive an alternate proof of Theorem 1 of~\cite{guo2012quenched} for $d \geq 1$, which states that 
\begin{theorem}[\textit{Theorem 1(i) of~\cite{guo2012quenched}}]~\label{Easy}
Let \[
\varepsilon_\omega(x) = \prod_{i=1}^{d} [\omega(x,e_i)]^{\frac{1}{d}}. 
\] If the stationary, ergodic, and elliptic environment( $\omega(x,e_i) = \omega(x,-e_i)$ with $\mathscr{E}= \operatorname{Id}$) satisfies the condition \[
\mathbb{E}[\varepsilon(o)^{-p}] <\infty
\] for some $p >d \geq 2 $, then Corollary~\ref{QIP} holds. 
\end{theorem}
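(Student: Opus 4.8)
The plan is to treat Theorem~\ref{Easy} as a direct specialization of Corollary~\ref{QIP}: once the environment of \cite{guo2012quenched} is recast in the environment-function language of Section~\ref{preq}, everything follows. First I would take $\mathscr{E} = \operatorname{Id}$ on $S(d)$, which is measurable, and observe that the hypothesis $\omega(x,e_i) = \omega(x,-e_i)$ forces the coordinate equalities $\mathscr{E}_i(\omega)(x) = \mathscr{E}_{i+d}(\omega)(x)$, so the induced environment function is balanced. The measure $\mu$ is stationary and ergodic by assumption, and the stated ellipticity is exactly Assumption~\ref{assump:2} for this balanced environment. Since $d \geq 2 \geq 1$, the dimension restriction of Corollary~\ref{QIP} is met.

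The only computation needed is to check Assumption~\ref{assump:1}. With $\mathscr{E} = \operatorname{Id}$ and the symmetry $\omega(x,e_i)=\omega(x,-e_i)$,
\[
c(\operatorname{Id},x) = \Big[\prod_{v \in V_d\setminus\{0\}} \omega(x,v)\Big]^{\frac{1}{2d}} = \Big[\prod_{i=1}^{d} \omega(x,e_i)^2\Big]^{\frac{1}{2d}} = \Big[\prod_{i=1}^{d} \omega(x,e_i)\Big]^{\frac{1}{d}} = \varepsilon_\omega(x),
\]
so $\int_{\mathscr{S}(d)} c^{-p}\,d\mu = \mathbb{E}[\varepsilon(o)^{-p}]$, which is finite for some $p > d$ precisely by hypothesis. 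Hence Assumption~\ref{assump:1} holds.

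Having verified all hypotheses, I would then invoke Corollary~\ref{QIP} to conclude that for $\mu$-a.e.\ $\omega$ the rescaled walk $X_{\operatorname{Id}(\omega)}([nt])/\sqrt{n} = X_\omega([nt])/\sqrt{n}$ converges weakly to Brownian motion on $\mathbb{R}^d$ with non-degenerate diagonal covariance $(b_i\delta_{ij})$, which is the assertion of Theorem~\ref{Easy}. There is no genuine obstacle here beyond bookkeeping: all the analytic content is already contained in Theorem~\ref{Lawler} (existence of an equivalent ergodic invariant measure for the environment process) and in its consequence Corollary~\ref{QIP}. The one point that deserves a line of care — and which the displayed identity above settles — is the translation between the symmetric nearest-neighbour setup of \cite{guo2012quenched} and the present formalism: that $\operatorname{Id}$ is an admissible balanced environment function and that the ellipticity quantity $\varepsilon_\omega$ of \cite{guo2012quenched} coincides with the quantity $c(\mathscr{E},\cdot)$ appearing in Assumption~\ref{assump:1}.
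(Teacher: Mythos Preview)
Your proposal is correct and mirrors exactly what the paper does: it treats Theorem~\ref{Easy} as the specialization $\mathscr{E}=\operatorname{Id}$ of Corollary~\ref{QIP}, the key observation being the identity $c(\operatorname{Id},x)=\varepsilon_\omega(x)$ under the symmetry $\omega(x,e_i)=\omega(x,-e_i)$, so that the moment hypothesis $\mathbb{E}[\varepsilon(o)^{-p}]<\infty$ is precisely Assumption~\ref{assump:1}. The paper does not write out a separate proof of Theorem~\ref{Easy} but simply remarks that its general framework ``provides an easier proof of Theorem~\ref{Easy}''; your explicit verification of the assumptions is the faithful unpacking of that remark.
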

To prove the above results, we shall need to bound a certain resolvent using martingale arguments and bounds on the~\textit{Monge-Ampère Operator} using arguments from~\ref{Lawler}. 

To state the next result, which connects balanced and general unbalanced regimes, we need the notion of embedded environments. The motivation for the construction can be understood only after the reader gets acquainted with the control argument, which will be demonstrated accordingly. 
\subsection{Embedded Environments}
Let $\mu$ be a stationary ergodic law on $(\mathscr{S}(d),\mathcal{B}(d)')$.

The embedded environment function, denoted by
$\Lambda \equiv \Lambda_\omega$, is a balanced environment function given by
\begin{align}
\Lambda_{\omega}(x,v) 
&= \frac{\omega(x,v) + \omega(x,-v)}{2}
\end{align}
for every $v \in V_d$. \\

The embedding function basically averages the probabilities of the opposing directions when the environment function is the Identity function. 

We shall assume without loss of generality that the environment function is Identity.

\begin{theorem}~\label{Master}
    Assume $d\geq1$. Let $\mu$ be a stationary ergodic law on $(\mathscr{S}(d), \mathcal{B}(d)')$. Then suppose the environment process for the Embedded Environment satisfies the conclusions of~\ref{Lawler}.
     
    Then there exists a stationary ergodic measure on $(\mathscr{S}(d), \mathcal{B}(d)')$ mutually absolutely continuous to $\mu$, such that the local process $\{\pi_0(\tau_{X_\omega(m)}\omega)\}_{m\geq0}$ is stationary and ergodic. 
\end{theorem}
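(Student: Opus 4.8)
The plan is to transport both an invariant measure and the ergodicity from the embedded (balanced) environment process — which by hypothesis already carries a stationary ergodic law $\bar\mu\approx\mu$ as in Theorem~\ref{Lawler} — to the original walk $X_\omega$, whose defining feature is that it is a re-biasing of the balanced walk that is \emph{local}: the bias at a site $x$ depends only on $\omega(x,\cdot)$. First I would set up the \emph{Action of Linear Transformations}: let $G\cong(\mathbb{Z}/2\mathbb{Z})^d$ act on $S(d)$ by interchanging, for $g=(g_1,\dots,g_d)$, the $e_i$- and $(-e_i)$-entries whenever $g_i=1$, extended pointwise to $\mathscr{S}(d)$ and by the corresponding coordinate sign-changes to $\mathbb{Z}^d$. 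Then $\Lambda_\omega=|G|^{-1}\sum_{g\in G}g\cdot\omega$, so the balanced walk is the annealed mixture of the walks in the reflected environments $g\cdot\omega$ with a fresh uniform $g\in G$ drawn at each step; dually, over any finite time horizon the law of $X_\omega$ is an explicit Girsanov re-weighting of that of the balanced walk, the per-step density being $\omega(x,v)/\Lambda_\omega(x,v)\in(0,2]$ and hence a mean-one martingale $M_n$ under the balanced law. On an enlarged probability space carrying $\omega$, the i.i.d.\ reflection choices $(g_k)_{k\ge1}$, the driving uniforms, and the $\bar\mu$-stationary data of Theorem~\ref{Lawler}, one realises simultaneously the balanced environment process $\{\tau_{X_{\Lambda(\omega)}(k)}\omega\}$ and a coupled copy of the original environment process $\{\tau_{X_\omega(m)}\omega\}$, with the correspondence between them recorded by $(g_k)$ and $M_n$.

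Next I would produce the invariant measure. Following the occupation-measure construction used for Theorem~\ref{Lawler}, form the Cesàro averages $\nu_n:=\tfrac1n\sum_{m=0}^{n-1}\mathrm{Law}\big(\tau_{X_\omega(m)}\omega\big)$ under the annealed law started from $\mu$, and extract a weak limit point $\nu$ along a subsequence. That $\nu$ is genuinely a probability measure (no escape of mass), that it is invariant for the environment-process kernel $K_d^{\operatorname{Id}}$, and that it is mutually absolutely continuous with $\mu$ should all follow by comparing the original occupation measure with the balanced one through the coupling above: ellipticity (Assumption~\ref{assump:2}) gives positivity of the step densities, while the $p>d$ moment bound (Assumption~\ref{assump:1}) is the \emph{control technique} that keeps $M_n$ and the compared occupation measures tight and keeps the two-sided Radon--Nikodym bounds from degenerating in the limit. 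Invariance of $\nu$ under $K_d^{\operatorname{Id}}$ makes the environment process, and hence its projection the local process $\{\pi_0(\tau_{X_\omega(m)}\omega)\}_{m\ge0}$, stationary under $\nu$.

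The main obstacle, as I see it, is ergodicity: one must show the invariant $\sigma$-algebra of the $\nu$-stationary local process is trivial. Given a shift-invariant event $A$ for the local process, I would pull it back along the coupling to the enlarged system and argue that, after averaging out the i.i.d.\ reflection choices $(g_k)$ — which is precisely what the Action of Linear Transformations was built to permit — $A$ is, up to $\bar\mu$-null sets, measurable with respect to the invariant $\sigma$-algebra of the balanced environment process, which is $\bar\mu$-trivial by the conclusion of Theorem~\ref{Lawler}; mutual absolute continuity then gives $\nu(A)\in\{0,1\}$. The delicate point is that the coupling is not a pathwise identification: $X_\omega$ and $X_{\Lambda(\omega)}$ occupy different sites, so one cannot simply equate tail events of the two environment processes. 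Reconciling this requires quantitative control — in probability or in $L^1$, again powered by the $p>d$ moment bound of Assumption~\ref{assump:1} — on the discrepancy between the environments seen by the two walks, strong enough that an invariant event for one is asymptotically determined by the other.

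Assembling the three steps yields a measure $\nu\approx\mu$ under which the local process $\{\pi_0(\tau_{X_\omega(m)}\omega)\}_{m\ge0}$ is stationary and ergodic, which is the assertion of Theorem~\ref{Master}. I would emphasise that, consistently with the Random Walk in Dirichlet Environment caveat in the introduction, it is only the local process — not necessarily the full environment process — that this argument delivers as ergodic, since the factor through $\pi_0$ is exactly what is needed for the subsequent Law of Large Numbers.
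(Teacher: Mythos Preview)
Your proposal takes a fundamentally different route from the paper and contains a genuine gap. You attempt to build a \emph{new} invariant measure $\nu$ for the full environment kernel $K_d^{\operatorname{Id}}$ via Ces\`aro averages, controlling the Radon--Nikodym derivatives through a Girsanov martingale $M_n=\prod_{k<n}\omega(X_k,\Delta_k)/\Lambda_\omega(X_k,\Delta_k)$. But the only pointwise bound available is $M_n\le 2^n$, and Assumption~\ref{assump:1}---which, incidentally, is not even a hypothesis of Theorem~\ref{Master}; only the \emph{conclusion} of Theorem~\ref{Lawler} for the embedded environment is assumed---controls the geometric mean $c(\Lambda,x)$ of the balanced transition probabilities, not the ratios $\omega/\Lambda$. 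You give no mechanism by which it would keep $M_n$ uniformly integrable or deliver two-sided density bounds on the limiting occupation measure; without this, neither $\nu\ll\mu$ nor $\mu\ll\nu$ follows. Your ergodicity step is likewise only a sketch: ``quantitative control on the discrepancy between the environments seen by the two walks'' is precisely the hard part, and nothing in your setup supplies it.

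The paper's argument sidesteps all of this. It takes $\mathbb{Q}=\lambda'$, the already-given invariant measure for the balanced environment process, and proves (Lemma~\ref{Final}) that under $\lambda'$ the local processes $\{\pi_0(\tau_{X_\omega(m)}\omega)\}_{m\ge0}$ and $\{\pi_0(\tau_{X_{\Lambda(\omega)}(m)}\omega)\}_{m\ge0}$ are \emph{equal in distribution}. The mechanism is the Bernoulli control you allude to, but used exactly rather than asymptotically: realise the balanced walk as a per-step coin-mixture of the original walk and its reflection $\alpha$, one fair coin $\gamma_k$ per step; a symmetry identity (Lemma~\ref{main*}, proved via the Action theory) shows that flipping $\gamma_{k+1}$ is equivalent to applying $\alpha$ to the $(k{+}1)$st cylinder set; and since sets of the form $\pi_0^{-1}(A)$ are $\alpha$-invariant (the origin is fixed by the spatial reflection), all $2^n$ coin sequences give identical finite-dimensional probabilities. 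In particular the all-ones sequence, which is the original walk $X_\omega$, matches the averaged sequence, which is $X_{\Lambda(\omega)}$. Stationarity of the local process under $\lambda'$ is then immediate, and ergodicity follows from a clean Bolthausen--Sznitman martingale argument (Lemma~\ref{equivalence}) using only ellipticity and $\lambda'\approx\mu$. No Girsanov weights, no new measure, and no moment hypothesis enter the proof of Theorem~\ref{Master} itself.
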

This theorem, combined with Theorem~\ref{Lawler}, gives us the following that 
\begin{theorem}~\label{Master2}
   Let $d \geq 1$, and let $\mu$ be a stationary ergodic environment law on $\mathscr{S}(d)$ with $\Lambda$ satisfying Assumption~\ref{assump:1} and~\ref{assump:2}. Then:

\begin{enumerate}
    \item[(i)] There exists a deterministic vector $v \in \mathbb{R}^d$ such that
    \[
    \lim_{n \to \infty} \frac{X_\omega(n)}{n} = v \quad \omega~\text{$\mu$-a.e}.
    \]

    \item[(ii)] \[
    \frac{X_\omega([nt]) - \sum_{i=0}^{[nt]} d(0, \tau_{X_{\omega}(m)} \omega)}{\sqrt{n}}
    \]
    converges to the standard Brownian Motion on $\mathbb{R}^d$ with non-degenerate covariance matrix $\omega$~$\mu$\text{-a.e.} 
\end{enumerate}
\end{theorem}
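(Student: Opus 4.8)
The plan is to use the hypotheses of Theorem~\ref{Master2} to invoke Theorems~\ref{Lawler} and~\ref{Master} in turn, producing a good environment measure, and then to run a martingale decomposition together with Birkhoff's ergodic theorem and a martingale functional CLT for the resulting stationary ergodic local process. First, note that $\Lambda$ is by construction a balanced environment function, and by hypothesis the pair $(\mu,\Lambda)$ satisfies Assumptions~\ref{assump:1} and~\ref{assump:2}; hence Theorem~\ref{Lawler} applies to $(\mu,\Lambda)$ and shows that the environment process of the embedded environment, $\{\tau_{X_{\Lambda(\omega)}(k)}\omega\}_{k\ge0}$, satisfies the conclusions of~\ref{Lawler}. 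This is exactly the hypothesis of Theorem~\ref{Master}, which therefore yields a stationary ergodic law $\tilde\mu$ on $(\mathscr{S}(d),\mathcal{B}(d)')$, mutually absolutely continuous with $\mu$, under which the local process $\xi_m:=\pi_0(\tau_{X_\omega(m)}\omega)$ (for the \emph{original} walk, with $\mathscr{E}=\operatorname{Id}$) is stationary and ergodic with respect to the annealed measure $\tilde\mu\rtimes\mathbb{P}^{\mathscr{E}(\omega)}_0$. Since the quenched law $\mathbb{P}^{\mathscr{E}(\omega)}_0$ does not depend on the law of the environment and $\tilde\mu\sim\mu$, any statement holding for the quenched law for $\tilde\mu$-a.e.\ $\omega$ also holds for $\mu$-a.e.\ $\omega$; every $\mu$-a.e.\ conclusion below is obtained this way, via Fubini from an annealed statement plus mutual absolute continuity.

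Next I set up the martingale. Writing $\bar d(p)=\sum_{i=1}^d(p_i-p_{i+d})e_i$, one has $d(0,\tau_{X_\omega(m)}\omega)=\bar d(\xi_m)$ (with $\mathscr{E}=\operatorname{Id}$), so $M_n:=X_\omega(n)-\sum_{m=0}^{n-1}\bar d(\xi_m)$ is a martingale under $\mathbb{P}^{\mathscr{E}(\omega)}_0$ for the natural filtration $\mathcal{F}_m=\sigma(X_\omega(0),\dots,X_\omega(m))$, for every fixed $\omega$, with increments bounded by $|M_{m+1}-M_m|_1\le2$. For part (i): the martingale strong law (square-integrable increments, $\sum_k k^{-2}<\infty$, via Kronecker's lemma on $\sum_k k^{-1}(M_k-M_{k-1})$) gives $M_n/n\to0$ quenched a.s.\ for every $\omega$; and $\tfrac1n\sum_{m=0}^{n-1}\bar d(\xi_m)$ is a bounded, hence integrable, additive functional of the stationary ergodic local process, so Birkhoff's theorem yields convergence to $v:=\mathbb{E}_{\tilde\mu}[\bar d(\xi_0)]$ annealed a.s., hence quenched a.s.\ for $\mu$-a.e.\ $\omega$. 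Adding, $X_\omega(n)/n\to v$ for $\mu$-a.e.\ $\omega$, with $v$ deterministic.

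For part (ii), observe $X_\omega([nt])-\sum_{m=0}^{[nt]}\bar d(\xi_m)=M_{[nt]}+O(1)$ (the discrepancy is a single drift term, of norm $\le1$), so it suffices to show $M_{[nt]}/\sqrt n$ converges weakly, under $\mathbb{P}^{\mathscr{E}(\omega)}_0$ and for $\mu$-a.e.\ $\omega$, to a Brownian motion with covariance $\Sigma$. I will invoke a standard martingale functional central limit theorem whose two hypotheses are checked as follows. The Lindeberg condition is automatic because the increments are uniformly bounded. For the predictable bracket, $\mathbb{E}\big[(M_{m+1}-M_m)(M_{m+1}-M_m)^{\top}\mid\mathcal{F}_m\big]=Q(\xi_m)$, where $Q(p)$ is the covariance matrix of the one-step displacement under the transition law $p$ — a bounded function of the local-process state; by Birkhoff applied to $\{\xi_m\}$, $\tfrac1n\sum_{m=0}^{[nt]-1}Q(\xi_m)\to t\Sigma$ with $\Sigma:=\mathbb{E}_{\tilde\mu}[Q(\xi_0)]$, first for all rational $t$ and then for all $t$ by monotonicity ($Q\succeq0$), quenched a.s.\ for $\mu$-a.e.\ $\omega$. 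This delivers the quenched invariance principle. Non-degeneracy of $\Sigma$ comes from ellipticity: a direct computation gives $Q(p)=\operatorname{diag}(p_i+p_{i+d})_{i=1}^d-\bar d(p)\bar d(p)^{\top}$, and since Assumption~\ref{assump:2} forces $p_i,p_{i+d}>0$, a Cauchy--Schwarz estimate shows $u^{\top}Q(p)u>0$ for every $u\ne0$; integrating over $\tilde\mu$ (where ellipticity holds a.s., as $\tilde\mu\sim\mu$) gives $\Sigma\succ0$.

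I expect the main obstacle to be part (ii): one must be careful to apply the martingale functional CLT in its \emph{quenched} form — for a fixed realisation of the environment rather than under the annealed law — which hinges on the bracket convergence holding $\mathbb{P}^{\mathscr{E}(\omega)}_0$-a.s.\ for $\mu$-a.e.\ $\omega$ (handled above through the annealed Birkhoff theorem, Fubini, and mutual absolute continuity) and on the degeneracy analysis of $\Sigma$. Everything else is routine bookkeeping layered on top of Theorems~\ref{Lawler} and~\ref{Master}, which carry the substantive content.
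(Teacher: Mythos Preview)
Your proposal is correct and follows essentially the same route as the paper: invoke Theorems~\ref{Lawler} and~\ref{Master} to obtain the stationary ergodic measure (the paper calls it $\mathbb{Q}$, you call it $\tilde\mu$), then decompose $X_\omega(n)$ into a bounded-increment martingale plus the cumulative drift, apply the martingale SLLN and Birkhoff for part~(i), and the martingale functional CLT with Birkhoff for the bracket for part~(ii). Your explicit Cauchy--Schwarz argument for non-degeneracy of $\Sigma$ and your care in distinguishing the quenched and annealed statements are more detailed than the paper's version, but the structure is identical.
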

\subsection{Remarks:} 
The embedding function allows us to couple two random walks in $\mathbb{Z}^d$, namely the original and a \textit{reflected} walk. This is motivated by the fact that the averaged random walk is balanced. Through the control argument, we pass from the balanced to the general case. The Action of Linear Transformations proves crucial in performing this step. 
\section{An outline of the Proofs }
Assume without loss of generality that $X_{\mathscr{E}(\omega)}(0)=0$ where $\mathscr{E}$ is an environment function, for the proofs unless otherwise indicated. 

Let us begin this section by seeing why an invariant measure for the local process guarantees answers to Conjectures~\ref{thm:LLN}.  We assume for now that Theorems~\ref{Lawler} and ~\ref{Master} are true and prove~\ref{Master2}.
\subsection{Proof of Theorem~\ref{Master2}:}
First note that by Theorems~\ref{Lawler} and ~\ref{Master} we have a stationary ergodic measure on $(\mathscr{S}(d), \mathcal{B}(d)')$ mutually absolutely continuous to $\mu$, such that the local process $\{\pi_0(\tau_{X_\omega(m)}\omega)\}_{m\geq0}$ is stationary and ergodic since Assumptions~\ref{assump:1} and~\ref{assump:2} are satisfied. Call this measure $\mathbb{Q}$.

\textit{Proof of Conjecture~\ref{thm:LLN}:}
We work with the environment following the distribution $\mathbb{Q}$. Consider the filtrations  $\mathcal{F}_k=\sigma(X_\omega(0),\dots, X_\omega({k-1}))$ for $k\geq1 $. Then we can write the following expression,
 \[
    \frac{X_{\omega}(n)}{n } = \sum_{k=0}^{n-1} \frac{X_{\omega}(k+1)-X_{\omega}(k) - d(0,\tau_{X_\omega(k)}\omega)}{n} + \sum_{k=0}^{n-1}\frac{d(0,\tau_{X_\omega(k)}\omega)}{n}.
     \]
Observe under the filtration sequence $\{\mathcal{F}_n\}_{n\geq 1}$ \[
\sum_{k=0}^{n-1} \frac{X_{\omega}(k+1)-X_{\omega}(k) - d(0,\tau_{X_\omega(k)}\omega)}{n},
\]
is a martingale. 

By the Strong Law of Large Numbers for bounded martingale difference sequences, 
\[ 
\sum_{k=0}^{n-1} \frac{X_{\omega}(k+1)-X_{\omega}(k) - d(0,\tau_{X_\omega(k)}\omega)}{n} \to 0 \hspace{0.5em} \mathbb{P}_0^\omega \ \ \text{a.e.} 
\]

And by Birkhoff's Ergodic Theorem~\cite{birkhoff1931proof}, 
\[
\sum_{k=0}^{n-1}\frac{d(0,\tau_{X_\omega(k)}\omega)}{n} \to   \int_{\mathscr{S}(d)} d(0,{\omega})d\mathbb{Q} \hspace{0.5em} \ \ \mathbb{Q}\ \ \text{a.e.}
\]

Combining both gives us the theorem $\mathbb{Q}^0= \mathbb{Q}\rtimes  \mathbb{P}_{0}^{\omega}$ \text{a.e.}, which by mutual absolute continuity of measures shows that it holds $\mu^0= \mu\rtimes \mathbb{P}_{0}^{\omega}$ \text{a.e.}\\

\textit{Proof of Theorem~\ref{Master2}(ii):}
 First we note that under the sequence of filtrations $\{\mathcal{F}_k\}_{k \geq 1}$, \[
U_k = X_\omega(k) - \sum_{m=0}^{k} d(0, \tau_{X_{\omega}(m)} \omega)
\]
is a martingale with bounded increments. Denote \[
T(k) = \omega(X_{\omega}(k-1)),\quad k\geq1. 
\]
Therefore \[
\Sigma_k = cov[ X_{\omega}(k) - X_{\omega}(k-1)- d(0,\tau_{X_{\omega}(k-1)} \omega)| \mathcal{F}_k]
\]
has the entries \[
(\Sigma_k)_{ij} = \begin{cases}
T_i(k) + T_{i+d}(k) - (T_i(k) -T_{i+d}(k))^2 & \text{if } i=j,\\
-[T_i(k) -T_{i+d}(k)][T_j(k) -T_{j+d}(k)], & \text{if } i \neq j
\end{cases}
\]
for $1\leq i,j \leq d$. 

Therefore by~\ref{Master}, \[
\frac{1}{n} \sum_{k=0}^{n-1} \Sigma_k \to \Sigma \quad \omega~\mu\textit{-a.e.}
\]
where $\Sigma$ is a positive definite matrix under assumption~\ref{assump:2}. Therefore, we conclude the theorem via the invariance principle for Martingales. 

Thus, we shall devote the rest of the paper to proving Theorems~\ref{Lawler} and Theorem~\ref{Master}. 
We divide the proof into various sections in order to properly motivate the proof.

\subsection{Discussion on Theorems~\ref{Lawler}, Corollary~\ref{QIP}:}~\label{balanced proof} 

This section provides an easier proof of Theorem~\ref{Easy} but generalises it to the general case of stationary, ergodic environments and arbitrary environment functions. This theorem is important for verifying the conditions of Theorem~\ref{Master} under the assumptions of Theorem~\ref{Master2}. 
 
The two main themes in this proof are the use of \textit{Periodised Environments} as used in~\cite{Lawler1982} and a uniqueness principle for concave functions. We shall prepare the reader with the prerequisites for the proof in the next section and fully motivate the proof. The proof in~\cite{Lawler1982} adopts the arguments of Papanicolaou and Varadhan~\cite{PapanicolaouVaradhan1982} for diffusion processes with random coefficients for the random walk for uniformly elliptic balanced environments. Our main step is to refine the arguments in~\cite{Lawler1982} for non-uniformly elliptic balanced environments under assumptions~\ref{assump:1} and~\ref{assump:2} to get our uniqueness principle. The ideas of Krylov~\cite{Krylov1971} were used to estimate the solutions of the discrete \textit{Monge-Ampère Equation} by Lawler. We adopt these ideas to bound a certain resolvent as in~\cite{Lawler1982} under assumptions~\ref{assump:1} and~\ref{assump:2}. 
\subsection{Discussion on Theorems~\ref{Master}:}
There are two core themes in this proof. The first being the use of \textit{Action of Linear Transformations on the Environment}. This approach is novel in the sense that it generalises the affine shift operator on the environment to linear transforms. Let us describe this theory in brief. When we talk of shifts on the environment $\tau_x \omega$, we recall it is given by the relation \[
\tau_x\omega(y) = \omega(x+y)\quad \forall x,y\in \mathbb{Z}^d.
\]
Through the Action theory, we talk of how general linear transformations on $\mathbb{Z}^d$ behave on the environment. Precisely speaking, we study expressions of the form $\tau_{Tx} \omega$ or $\tau_x T.\omega$, where $T \in \mathrm{Hom}_\mathbb{Z}(\mathbb{Z}^d, \mathbb{Z}^d)$. These notions prove extremely fundamental to our arguments as we shall see. We shall prepare the reader accordingly in the upcoming sections for the proof.

The second core theme in this proof is to pass from the embedded environment to the original environment by exhibiting stationarity of the local process through a control argument using Bernoulli Random Variables Independent of the environment. This allows us to ``couple" the original walk and a reflected walk. This technique allows us to control the paths of the random walk, which shall be key to the proof.
The control argument will therefore allow us to use the stationary measure in $\mathbb{Z}^{d}$ in the Embedded Environment and pass on to the environment in $\mathbb{Z}^d$ to show stationarity of the local process. In particular, the local process of the walk shall be equal in law to the local process of the random walk in the embedded environment. The control argument shall help us achieve this by sort of averaging the original and the reflected, as might be noted in the choice of the embedding function. 

The main hurdle in the proof is to show stationarity of the local process. The ergodicity of the local process follows by adapting arguments from~\cite{BolthausenSznitman2002} suitably, which shall be demonstrated.

\section{Preparation for the Proof of Theorem~\ref{Lawler}, Corollary~\ref{QIP} :}

Fix an environment ${\omega} \in \mathscr{S}(d)$ and $\mathscr{E}$ a balanced measurable environment function on $(\mathscr{S}(d), \mathcal{B}(d)')$. 
 
We first state our uniqueness result. We shall use this result crucially in the upcoming sections.
\subsection{Uniqueness Principle: }
Define the following: \begin{align*}
D_n 
&= \{ x \in \mathbb{Z}^d : |x|_1 \le n \}, \\[0.3em]
\partial D_n 
&= \{ x \in D_n : |x|_1 = n \}, \\[0.3em]
D_n^{\circ} 
&= D_n \setminus \partial D_n .
\end{align*}

Let $x \in D_n^{\circ}$. Define the second-order difference operators for a concave function $z:D_n \to \mathbb{R}$ given by \[
\Delta_iz(x) = z(x+e_i) +z(x-e_i) -2z(x)\quad i=1,\dots,d
\]
Define the hitting time \begin{align*}
\tau_n 
&= \inf \{m: X_{\mathscr{E}(\omega)}(m) \in \partial D_n\}.
\end{align*}
Let $f:D_n \to [0,\infty)$ be a function with $f \equiv 0 $ on $\partial D_n$. Define the class of concave functions $\mathscr{A}(\mathscr{E},f)$ containing concave functions $u(x)$ satisfying the properties \begin{enumerate}
    \item $u \equiv 0$ on $\partial D_n$, 
    \item $|Mz(x)|^{\frac{1}{d}} \geq \frac{f(x)}{c(\mathscr{E},x)} \quad x \in D_n^{\circ}$ 
\end{enumerate}
\begin{theorem}[Uniqueness Principle]~\label{Uniqueness}
Let $(\omega, \mathscr{E})$ be any balanced, elliptic environment. There exists a unique function $z \in \mathscr{A}(\mathscr{E},f)$ such that \[
|Mz(x)|^{\frac{1}{d}} = \frac{f(x)}{c(\mathscr{E},x)}.
\] 
Furthermore, this function is also given alternately by \[
z(x) = \inf_{u \in \mathscr{A}(\mathscr{E},f)} u(x), \quad \forall x\in D_n.
\] 
\end{theorem}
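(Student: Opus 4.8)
The plan is to establish existence via the variational characterisation and then prove that the variational solution in fact satisfies the Monge--Amp\`ere equation with equality, which simultaneously yields uniqueness. First I would verify that $\mathscr{A}(\mathscr{E},f)$ is nonempty: since $(\omega,\mathscr{E})$ is elliptic, $c(\mathscr{E},x)>0$ on the finite set $D_n^{\circ}$, so $f(x)/c(\mathscr{E},x)$ is a bounded nonnegative function there, and one can exhibit an explicit concave competitor (for instance a suitably scaled multiple of $n^2-|x|_1^2$, or more carefully a radial concave function whose discrete Hessian product dominates the prescribed right-hand side) vanishing on $\partial D_n$; ellipticity and finiteness of $D_n$ make this a bounded construction. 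Next I would show that the pointwise infimum $z(x)=\inf_{u\in\mathscr{A}(\mathscr{E},f)}u(x)$ is itself a member of $\mathscr{A}(\mathscr{E},f)$: concavity is preserved under infima of concave functions on the discrete ball (each second difference inequality $z(x+v)+z(x-v)-2z(x)\le 0$ passes to the infimum), the boundary condition is immediate, and the lower bound on $|Mz|^{1/d}$ is the delicate point — here I would use that $M$ is, up to the $c$-normalisation, monotone in the appropriate sense on concave functions and that the infimum of finitely many (or, by a compactness/Helly-type argument on the finite-dimensional space $\mathbb{R}^{D_n}$, effectively finitely many) elements of $\mathscr{A}$ stays in $\mathscr{A}$.

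The heart of the argument is to show $|Mz(x)|^{1/d}=f(x)/c(\mathscr{E},x)$ everywhere on $D_n^{\circ}$, i.e.\ that the extremal function saturates the constraint. I would argue by contradiction: if at some interior point $x_0$ we had strict inequality $|Mz(x_0)|^{1/d}>f(x_0)/c(\mathscr{E},x_0)$, then I would perturb $z$ downward slightly near $x_0$ — replacing $z$ by $z-\varepsilon\varphi$ for a small bump $\varphi$ supported near $x_0$ and adjusted to preserve concavity — so that the Monge--Amp\`ere lower bound is still met at $x_0$ by continuity, while the lower bounds at neighbouring points are only helped (pushing $z$ down at $x_0$ only increases $\Delta_i z$ at the neighbours of $x_0$, hence increases $Mz$ there). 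This produces a strictly smaller element of $\mathscr{A}(\mathscr{E},f)$, contradicting minimality of $z$. One must be careful that the downward perturbation does not destroy concavity at $x_0$ itself or at its neighbours; this is where the discreteness of the lattice and the finiteness of $D_n$ help, since the constraint set is cut out by finitely many inequalities and a sufficiently small $\varepsilon$ preserves all the non-active ones.

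For uniqueness, suppose $z_1,z_2\in\mathscr{A}(\mathscr{E},f)$ both satisfy $|Mz_j(x)|^{1/d}=f(x)/c(\mathscr{E},x)$. The cleanest route is a comparison/maximum principle for the discrete Monge--Amp\`ere operator on concave functions: I would consider $z_1-z_2$ and examine a point where it attains its maximum over $D_n$; if that maximum is attained only in the interior, then near such a point the ordering of the discrete Hessians of $z_1$ and $z_2$ forces (via the AM--GM-type monotonicity of the product $M$) a contradiction with the equality of $|Mz_j|^{1/d}$, unless $z_1-z_2$ is locally constant there, and one propagates this to the boundary where both vanish, giving $z_1\le z_2$; by symmetry $z_1=z_2$. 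Alternatively, uniqueness follows for free once one knows the variational solution solves the equation and that \emph{any} solution must coincide with the infimum — this is the content of the ``furthermore'' clause, and I would prove it by showing every $u\in\mathscr{A}(\mathscr{E},f)$ lies above any solution $z$ of the equation (again a maximum-principle comparison: at an interior maximum of $z-u$ one has $\Delta_i z\ge \Delta_i u$ for all $i$, so $|Mz|^{1/d}\ge |Mu|^{1/d}\ge f/c$ with equality forcing $z-u$ constant, hence $z\le u$ since both vanish on $\partial D_n$). The main obstacle I anticipate is making the perturbation argument in the second paragraph fully rigorous while respecting concavity on the lattice — ensuring the ``bump'' can be chosen so that after subtracting it $z$ remains concave and still dominates the prescribed right-hand side at every interior point — and it is precisely here that Krylov-type estimates for the discrete Monge--Amp\`ere equation, adapted as in Lawler's work under Assumptions~\ref{assump:1} and~\ref{assump:2}, will be needed to control the solution.
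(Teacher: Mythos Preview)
Your approach is essentially the paper's: show $\mathscr{A}(\mathscr{E},f)$ is nonempty via an explicit radial competitor, take the pointwise infimum $z$, verify $z\in\mathscr{A}(\mathscr{E},f)$, and then argue by contradiction that $z$ saturates the Monge--Amp\`ere constraint by perturbing downward at a point of strict inequality. Two simplifications are worth noting. First, the perturbation need not be a smooth bump $z-\varepsilon\varphi$: the paper simply replaces the single value $z(x_0)$ by some $\beta<z(x_0)$ close enough that the product $\prod_i\bigl(2\beta-z(x_0+e_i)-z(x_0-e_i)\bigr)$ still exceeds $(f(x_0)/c(\mathscr{E},x_0))^d$. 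Lowering one value only makes the second differences at neighbouring points \emph{more} negative, so concavity and the constraint at neighbours are automatic; at $x_0$ itself the strict inequality $|Mz(x_0)|>0$ forces every $\Delta_i z(x_0)<0$, which gives room to decrease $\beta$ while staying concave. Second, your closing worry is misplaced: no Krylov-type estimates enter the proof of this theorem at all --- the argument is entirely elementary and finite-dimensional. Those estimates appear only later, in bounding $\|z\|_\infty$ for Lemma~\ref{new}.

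One point where you go \emph{beyond} the paper: the paper's proof does not actually supply the uniqueness argument. It establishes that the infimum lies in $\mathscr{A}(\mathscr{E},f)$ and satisfies the equation, but never shows that any other solution must coincide with it. Your maximum-principle comparison (at an interior maximum of $z-u$ one has $-\Delta_i z\le -\Delta_i u$ for each $i$, hence $|Mz|^{1/d}\le|Mu|^{1/d}$, forcing equality and then constancy) is the correct way to close that gap, and is a genuine addition.
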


Now we return to Theorem~\ref{Lawler} and Corollary~\ref{QIP}. Assume without loss of generality, $X_{\mathscr{E}(\omega)}(0) = 0$. 

To prove this corollary, one notes that the random walk under consideration is a martingale. Define \[
Z_j = ( Z_j^1,\dots,Z_j^d) = X_{\mathscr{E}(\omega)}(j) - X_{\mathscr{E}(\omega)}(j-1)~\text{for j}\geq 1. 
\] Define $\tau_{X_{\mathscr{E}(\omega)}(j)} \omega = Y_j$. Then \[
\mathbb{P}[Z_j^{i} = e_i] = \mathbb{P}_x[Z_j^i = -e_i] = \mathscr{E}_i(Y_j)(0).
\]
Under the filtration \[
\mathcal{F}_j= \sigma(X_{\mathscr{E}(\omega)}(0),\dots,X_{\mathscr{E}(\omega)}(j-1)) 
\] for $j \geq 1$. One can check that \[
\mathbb{E}[Z_j|\mathcal{F}_j]=0.
\]
This shows our walk \[
X_{\mathscr{E}(\omega)}(j) = \sum_{i=1}^{j} Z_j  
\]
is a martingale. Furthermore, we have the relation \[
cov(Z_j^{i_1}, Z_j^{i_2}|\mathcal{F}_j) = \begin{cases}
0 & \text{if $i_1\neq i_2$ }\\
2\mathscr{E}_{i_1}(Y_{j-1})(0) & \text{if $i_1= i_2$}.
\end{cases}
\]
Let \[
V_n^i = \sum_{j=0}^{n-1}2 \mathscr{E}_i(Y_j(0)). 
\]
Then the invariance principle for martingale gives \[
W_n(t) = (W_n^1(t),\dots ,W_n^d(t))
\]
converges in distribution to the standard Brownian motion on $\mathbb{R}^d$ where $W_n^{i}$ is given by \[
 W_n^i(t) =\frac{\sum _{j=1}^{[nt]} Z_j}{\sqrt{V_n^i}}. 
\]
Suppose there exists $b \in S(d)$ such that, 
\[ 
\lim_{n\to \infty } \frac{1}{n } \sum_{k=0}^{n-1} \mathscr{E}_i[\omega(X_{\mathscr{E}(\omega)}(k))]= b_i \quad i=1,2,\dots, d.
\]

Then, under the assumption~\ref{assump:2}, one can conclude the corollary. This is an ergodic theorem, and therefore one can conclude the invariance principle if we prove the following,  \begin{theorem}
    $\exists b \in S(d)$ such that \[
\lim_{n\to \infty } \frac{1}{n } \sum_{k=0}^{n-1} \mathscr{E}[\omega(X_{\mathscr{E}(\omega)}(k))] = b.
\]
for $\omega$~$\mu\text{-a.e.}$
\end{theorem}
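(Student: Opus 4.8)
The plan is to realize the Cesàro averages $\frac1n\sum_{k=0}^{n-1}\mathscr{E}[\omega(X_{\mathscr{E}(\omega)}(k))]$ as a Birkhoff average along the orbit of the environment process $\{\tau_{X_{\mathscr{E}(\omega)}(k)}\omega\}_{k\ge 0}$, and then invoke Theorem~\ref{Lawler} to conclude. More precisely, note that $\mathscr{E}[\omega(X_{\mathscr{E}(\omega)}(k))] = \mathscr{E}\big((\tau_{X_{\mathscr{E}(\omega)}(k)}\omega)(o)\big) = F(\tau_{X_{\mathscr{E}(\omega)}(k)}\omega)$ for the bounded measurable function $F:\mathscr{S}(d)\to S(d)$ given by $F(\eta)=\mathscr{E}(\eta(o))$ (each coordinate of $F$ lies in $[0,1]$, so $F\in L^1$ of any probability measure on $\mathscr{S}(d)$). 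Hence the average in question is exactly $\frac1n\sum_{k=0}^{n-1}F(\tau_{X_{\mathscr{E}(\omega)}(k)}\omega)$.

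First I would apply Theorem~\ref{Lawler}: under Assumptions~\ref{assump:1} and~\ref{assump:2}, since $\mathscr{E}$ is balanced, there is a stationary ergodic law $\mathbb{Q}$ on $(\mathscr{S}(d),\mathcal{B}(d)')$, mutually absolutely continuous with $\mu$, under which the environment process is stationary and ergodic. Second, I would apply Birkhoff's ergodic theorem to the measure-preserving system given by the environment process under $\mathbb{Q}$ and the observable $F$: for $\mathbb{Q}$-a.e.\ $\omega$ and $\mathbb{P}^{\mathscr{E}(\omega)}_0$-a.e.\ path, $\frac1n\sum_{k=0}^{n-1}F(\tau_{X_{\mathscr{E}(\omega)}(k)}\omega)\to \int_{\mathscr{S}(d)} F\, d\mathbb{Q} =: b$. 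The limit $b$ is a single deterministic element of $S(d)$ because $S(d)$ is convex and closed and $F$ takes values in it, so the integral against a probability measure stays in $S(d)$. Third, since $\mathbb{Q}\ll\mu$ and $\mu\ll\mathbb{Q}$, the $\mathbb{Q}$-a.e.\ statement transfers to a $\mu$-a.e.\ statement (a null set for $\mathbb{Q}$ is a null set for $\mu$), and likewise the quenched path-a.e.\ convergence is preserved, which gives exactly the claimed conclusion with this $b$.

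A couple of technical points need care. One must check that the environment process, viewed as a stochastic process, is genuinely a stationary measure-preserving transformation on the right space so that Birkhoff applies — this is precisely what Theorem~\ref{Lawler} asserts (stationarity and ergodicity of $\{\tau_{X_{\mathscr{E}(\omega)}(k)}\omega\}$ under $\mathbb{Q}$), so the shift here is the ``environment seen from the particle'' evolution with kernel $K_d^{\mathscr{E}}$, and the ergodic averages are over $k$. One should also verify that, because the statement is about $\omega$ $\mu$-a.e.\ and not jointly in $\omega$ and the path, it suffices to have convergence $\mathbb{Q}^0 = \mathbb{Q}\rtimes\mathbb{P}^{\mathscr{E}(\omega)}_0$-a.e.\ and then project: for $\mathbb{Q}$-a.e.\ $\omega$ the path-averages converge $\mathbb{P}^{\mathscr{E}(\omega)}_0$-a.e., and one fixes such an $\omega$. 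Finally, integrability of $F$ is automatic since $|F|\le 1$, so no moment hypothesis beyond Theorem~\ref{Lawler} is needed here.

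The main obstacle is not in this theorem itself — given Theorem~\ref{Lawler}, the argument is essentially a direct application of Birkhoff's ergodic theorem plus mutual absolute continuity — but rather in ensuring that Theorem~\ref{Lawler}'s hypotheses are in force and that the environment process is being used with exactly the right observable and in exactly the right space. The only genuinely delicate step is the identification $\mathscr{E}[\omega(X_{\mathscr{E}(\omega)}(k))] = F(\tau_{X_{\mathscr{E}(\omega)}(k)}\omega)$ together with the measurability of $F$ on $(\mathscr{S}(d),\mathcal{B}(d)')$, which follows from the measurability of $\mathscr{E}:S(d)\to S(d)$ and of the coordinate evaluation $\eta\mapsto\eta(o)$; once this is in place, everything else is routine.
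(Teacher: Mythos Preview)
Your proposal is correct and matches the paper's own argument: the paper reduces this theorem to Theorem~\ref{Lawler} (restated there as Theorem~\ref{2}), obtaining a stationary ergodic measure $\lambda$ mutually absolutely continuous to $\mu$, and then identifies $b=\int_{\mathscr{S}(d)}\mathscr{E}(\omega)\,d\lambda$ via Birkhoff's ergodic theorem and mutual absolute continuity, exactly as you do. Your write-up simply makes the Birkhoff step and the observable $F(\eta)=\mathscr{E}(\eta(o))$ explicit, which the paper leaves implicit.
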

We prove this by finding an appropriate measure mutually absolutely continuous to $\mu$ such that the ergodic theorem holds.

Recall that $Y_j$ is a Markov chain with the kernel \[ K^{\mathscr{E}}_d(g(Y_j)) = \sum_{i=1}^d \mathscr{E}_i(Y_j)(0)[ g(\tau_{e_i} Y_j) +g(\tau_{-e_i}Y_j)]  + \mathscr{E}_0(Y_j)(0)g(Y_j)
\] 
for all $g: \mathscr{S}(d) \to \mathbb{R}^d$. Therefore~\ref{QIP} is equivalent to the following theorem, which is a restatement of Theorem~\ref{Lawler}, \begin{theorem}\label{2}
Let $\mu$ be a stationary ergodic measure on $(\mathscr{S}(d), \mathcal{B}(d)')$. Then under assumptions~\ref{assump:1} and~\ref{assump:2}, there exists a stationary ergodic measure $\lambda$ mutually absolutely continuous to $\mu$ under which $\{Y_j\}$ is stationary and ergodic. 
\end{theorem}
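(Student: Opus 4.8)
The plan is to follow the classical environment-viewed-from-the-particle strategy of Papanicolaou–Varadhan and Lawler, but to replace the compactness/uniform-ellipticity input by the integrability of $c^{-p}$ from Assumption~\ref{assump:1}. First I would work on the torus: for each $N$, periodise the environment to obtain $\omega^{(N)}\in\mathscr{S}(d)$ of period $N$, and consider the finite-state Markov chain $Y_j^{(N)}=\tau_{X_{\mathscr{E}(\omega^{(N)})}(j)}\omega^{(N)}$ on the finite set $(\mathbb{Z}/N\mathbb{Z})^d$ of shifts. Since the periodised environment is balanced and elliptic, the chain is irreducible on a finite set, hence has a unique invariant probability measure; call its density with respect to the normalised counting measure $\phi_N$. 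Writing the invariance equation $\sum_{v}\mathscr{E}(\omega^{(N)})(x,v)\,\phi_N(x) = \sum_v \mathscr{E}(\omega^{(N)})(x+v,v)\,\phi_N(x+v)$ and using balancedness, one recognises the discrete divergence-form / Monge–Ampère structure that lets the Uniqueness Principle (Theorem~\ref{Uniqueness}) be brought to bear on estimating $\phi_N$.

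The crucial step is a uniform integrability bound: I would show that $\int \phi_N\,c^{-p}\,$-type quantities, or more precisely the averaged densities $\bar\phi_N := $ law of $Y_0^{(N)}$ pushed to $\mathscr{S}(d)$, form a tight family and that their Radon–Nikodym derivatives against $\mu$ are bounded in a suitable $L^q$ (with $1/p+1/q=1$) uniformly in $N$. This is exactly where $p>d$ enters: the discrete Aleksandrov–Bakelman–Pucci / Krylov estimate for the Monge–Ampère operator gives an $\ell^d$-type control of solutions of $|Mz|^{1/d}=f/c(\mathscr{E},\cdot)$ by the $\ell^d$ norm of $f$ over $D_n$, and feeding $c^{-1}$ through this estimate together with Hölder and the assumed $L^p(\mu)$ bound on $c^{-1}$ produces a constant independent of $N$. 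I would then extract a weak-* limit point $\lambda$ of $\bar\phi_N\,d\mu$ (along a subsequence), using the uniform $L^q$-bound on the densities to ensure $\lambda\ll\mu$ and to pass the invariance equation to the limit, so that $\lambda$ is invariant for the kernel $K_d^{\mathscr{E}}$ and the chain $\{Y_j\}$ is stationary under $\lambda$.

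Next I would upgrade $\lambda\ll\mu$ to mutual absolute continuity. One direction is automatic; for the other, I would use ellipticity (Assumption~\ref{assump:2}) and stationarity of $\mu$ to show that the invariant density $d\lambda/d\mu$ is $\mu$-a.s.\ strictly positive: if it vanished on a set of positive $\mu$-measure, a shift/irreducibility argument (the walk can reach any site with positive probability because every $\mathscr{E}_k(\omega)(x)>0$) together with stationarity of $\mu$ would force it to vanish a.e., contradicting that it is a probability density. Finally, ergodicity of $\{Y_j\}$ under $\lambda$: here I would argue that the invariant density is in fact \emph{unique}, so $\lambda$ is extremal among invariant measures absolutely continuous to $\mu$, which by the standard dichotomy gives ergodicity of the environment process; the uniqueness of the density is where Theorem~\ref{Uniqueness} is used again in its full force, now in the infinite-volume limit, to rule out two distinct invariant densities both $\ll\mu$. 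Ergodicity of $\mu$ with respect to the shifts $\tau_x$ is the input that makes this rigidity work.

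**Main obstacle.** I expect the hard part to be the uniform-in-$N$ integrability of the invariant densities $\phi_N$ without uniform ellipticity — i.e.\ making the Krylov/ABP-type bound for the discrete Monge–Ampère operator quantitative enough that the $c^{-1}$ weights can be absorbed via Hölder at the threshold exponent $p>d$, and then checking that the periodisation does not degrade these bounds. A secondary subtlety is justifying that the weak limit of the periodised invariant measures actually satisfies the invariance equation for the original (non-periodic) kernel $K_d^{\mathscr{E}}$ and that no mass escapes in the limit; this is handled by the tightness coming from the same integrability estimate, but it must be done carefully because $\mathscr{S}(d)$ is not compact and $\mathscr{E}$ is only measurable.
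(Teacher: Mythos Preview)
Your overall strategy matches the paper's: periodise, obtain a uniform integrability bound on the torus invariant densities $\phi_N$ via a discrete Krylov/ABP-type estimate, pass to a weak limit $\lambda$, and then deduce mutual absolute continuity and ergodicity. A few points, however, diverge from or misidentify the paper's actual technical devices.

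First, the paper does \emph{not} extract the bound on $\phi_n$ directly from the invariance equation as you propose. The invariance equation is the adjoint equation $L^*\phi_n=\phi_n$, and the ABP/Monge--Amp\`ere estimate (equivalently the Uniqueness Principle, Theorem~\ref{Uniqueness}) controls the \emph{forward} operator, i.e.\ it bounds concave supersolutions and hence the exit-time potential $Qf$ on balls. The paper's key bridging device is the resolvent $R_n^\omega=\sum_j(1-n^{-2})^jL_{\mathscr{E}(\omega_n)}^j$ on the torus: Lemma~\ref{3} gives $\|R_n g\|_\infty\le c_1 n^2\|g/c\|_d$, and then \emph{duality} together with the eigen-relation $R_n^*\phi_n=n^2\phi_n$ yields $\|\phi_n\|_{d/(d-1)}$ control. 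From there, log-convexity of $\ell^p$-norms and H\"older against $\|1/c\|_p$ (with $p>d$) give the uniform $\|\phi_n\|_{p/(p-1)}$ bound. Your plan to read off the Monge--Amp\`ere structure directly from the invariance equation would need an adjoint-side ABP estimate, which is not what Theorem~\ref{Uniqueness} provides; the resolvent-plus-duality step is genuinely the missing ingredient in your sketch.

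Second, $\mathscr{S}(d)=S(d)^{\mathbb{Z}^d}$ \emph{is} compact in the product topology (Tychonoff on the simplex $S(d)$), and the paper uses this explicitly to extract the weak limit $\lambda$ of the empirical measures $\lambda_n=\sum_{x\in T_n}\delta_{\tau_x\omega_n}\phi_n(x)/(2n)^d$. Your ``secondary subtlety'' about non-compactness and escape of mass is therefore not an issue; the multidimensional ergodic theorem (Lemma~\ref{4}) handles the comparison of $\lambda_n$ with $\mu_n$, and Dunford--Pettis then gives $\lambda\ll\mu$ with $d\lambda/d\mu\in L^{p/(p-1)}(\mu)$.

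Third, for ergodicity and mutual absolute continuity the paper does not re-invoke the Uniqueness Principle in infinite volume as you suggest; it cites the standard argument of Bolthausen--Sznitman (the same one reproduced in Lemma~\ref{equivalence}): once $\lambda\ll\mu$ is stationary for $K_d^{\mathscr{E}}$, ellipticity forces $d\lambda/d\mu$ to satisfy a shift-monotonicity that, combined with stationarity and ergodicity of $\mu$ under $\{\tau_x\}$, yields $\mu\ll\lambda$ and ergodicity of the environment process. The Uniqueness Principle (Theorem~\ref{Uniqueness}) is a statement about concave solutions to a finite-volume discrete Monge--Amp\`ere equation and does not speak to uniqueness of invariant densities.
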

To prove stationarity of the chain is enough to conclude ergodicity via arguments in~\cite{BolthausenSznitman2002}. 
Under~\ref{2}, \[
b= \int_{\mathscr{S}(d)} \mathscr{E(}\omega) d\lambda.
\]
We shall work towards proving Theorem~\ref{2} under assumptions~\ref{assump:1} and~\ref{assump:2}. We shall now introduce the notion of Periodised Environments as mentioned in section~\ref{balanced proof}, fundamental to the proof of~\ref{2}. 
\subsection{Periodised Environments}
Define the equivalence relation $\sim$ by \[
(x_1,x_2,\dots,x_d)\sim (y_1,y_2,\dots,y_d) \iff \frac{(x_i -y_i)}{2n} \in \mathbb{Z}~\text{for each}~i=1,2,\dots,d. 
\]
Let $\mathbb{Z}^d/ \sim ~= T_n$. Clearly $|T_n|=(2n)^d$. We shall also identify $T_n$ as the set in $\mathbb{Z}^d$ given by \[
T_n = \{-n+1,-n+2,\dots,0,1,2,\dots,n\}^d. 
\] 
\begin{definition}[Periodic Environment]
We call $\omega$ a periodic environment if $\omega$ is a function $
\omega:T_n\to S(d)$.  Let $\mathscr{S}_n$ denote the set of all such $\omega$. 
\end{definition}
Fix $\omega \in \mathscr{S}(d)$. We take the periodic environment $(\omega_n, \mathscr{E})$ as \begin{align*}
\omega_n (x) 
&= \omega( [x]_{\sim})\\  
\mathscr{E}(\omega_n) (x) 
&= \mathscr{E} (\omega)( [x]_{\sim}),
\end{align*}
where $x \in \mathbb{Z}^d$. 
 
The corresponding random walk $\{\hat{X}_n\}$ in this periodic environment has the kernel $L_{\mathscr{E}(\omega_n)}$. 
 
For a function $g: T_n \to \mathbb{R}$, we extend it over $\mathbb{R}^d$ as \[
g(x) = g([x]_\sim).
\] 
For an environment $\omega$, denote by $R_n^{\omega}$ the resolvent given by \[
R_n^{\omega} g(x) = \sum_{j=0}^{\infty} (1-\frac{1}{n^2})^j L_{\mathscr{E}({\omega_n})}^j g(x) 
\]
For $g:T_n \to \mathbb{R}$, define the $l_p$($1\leq p\leq \infty)$ norms with respect to the normalized counting measure on $T_n$ by \begin{align*}
||g||_p 
&=\big[\frac{1}{(2n)^d}\sum_{x \in T_n } |g(x)|^p \big]^{\frac{1}{p}}\quad\text{for}~1\leq p <\infty,\\
||g||_{\infty} 
&= \sup_{x \in T_n} |g(x)| . 
\end{align*}
We also define the normed spaces for $1\leq p \leq\infty$ as \[
l^p(T_n) =\{ g: ||g||_p <\infty \}.
\]
As in the original paper by~\cite{Lawler1982}, in the following lemma we state a bound on the resolvent which in turn proves~\ref{2}.

\begin{lemma}\label{3}
There exists $c_1\equiv c_1(d)>0$ such that $\forall \omega \in \mathscr{S}_n$, \[
||R^{\omega}_ng||_{\infty} \leq c_1 n^2||\frac{g(x)}{c(\mathscr{E},x)}||_d. 
\]
\end{lemma}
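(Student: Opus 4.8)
The plan is to run a discrete Aleksandrov--Bakelman--Pucci (ABP) argument: at the site where $R_n^\omega g$ is largest, I dominate it by the solution of a discrete Monge--Ampère equation on a lattice ball around that site, the solution being the one furnished by Theorem~\ref{Uniqueness}. Since $L_{\mathscr{E}(\omega_n)}$ is the one-step transition operator of $\hat X_n$, one has the representation $R_n^\omega g(x)=\mathbb{E}_x\bigl[\sum_{j\ge0}\beta^{j}g(\hat X_n(j))\bigr]$ with $\beta:=1-n^{-2}$, and $|R_n^\omega g|\le R_n^\omega|g|$ pointwise while $\||g|/c\|_{d}=\|g/c\|_{d}$, so we may replace $g$ by $|g|$ and assume $g\ge0$; also, if $c(\mathscr{E},x)=0$ at some $x$ with $g(x)>0$ the right-hand side of the lemma is $+\infty$, so we may assume $(\omega_n,\mathscr{E})$ elliptic. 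Write $u:=R_n^\omega g\ge0$, pick $x_0\in T_n$ with $u(x_0)=\|u\|_\infty$, and realise the geometric killing through an independent auxiliary clock $\mathsf{T}$ (independent of $\hat X_n$) with $\mathbb{P}(\mathsf{T}\ge j)=\beta^{j}$, so that $u(x)=\mathbb{E}_x\bigl[\sum_{j=0}^{\mathsf{T}}g(\hat X_n(j))\bigr]$.

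Next I localise around $x_0$. Put $D':=\{x\in\mathbb{Z}^d:\ |x-x_0|_1\le n-1\}$; distinct points of $D'$ are pairwise inequivalent $\bmod\,2n$, so $D'$ embeds into $T_n$ and inside $D'$ the periodic walk $\hat X_n$ agrees with the $\mathbb{Z}^d$-walk in the periodically extended environment $\omega_n$. Applying Theorem~\ref{Uniqueness} to the balanced elliptic environment $(\omega_n,\mathscr{E})$ on $D'$ with $f:=g$ on $(D')^{\circ}$ and $f:=0$ on $\partial D'$ produces a concave $z\in\mathscr{A}(\mathscr{E},f)$ with $z\equiv0$ on $\partial D'$, $z\ge0$, and $|Mz(x)|^{1/d}=f(x)/c(\mathscr{E},x)$ on $(D')^{\circ}$. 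Using $\Delta_i z\le0$, the arithmetic--geometric mean inequality on the numbers $-\mathscr{E}_i(\omega_n)(x)\Delta_i z(x)\ge0$, and the identity $c(\mathscr{E},x)=\bigl(\prod_{i=1}^{d}\mathscr{E}_i(\omega_n)(x)\bigr)^{1/d}$ (valid since $\mathscr{E}$ is balanced), one gets on $(D')^{\circ}$
\[
-\bigl(L_{\mathscr{E}(\omega_n)}z(x)-z(x)\bigr)=-\sum_{i=1}^{d}\mathscr{E}_i(\omega_n)(x)\,\Delta_i z(x)\ \ge\ d\,c(\mathscr{E},x)\,|Mz(x)|^{1/d}=d\,f(x)=d\,g(x),
\]
i.e.\ $z$ is a supersolution of the generator of $\hat X_n$. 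Hence, with $\tau$ the first exit time of $(D')^{\circ}$, the process $N_k:=z(\hat X_n(k\wedge\tau))+\sum_{j<k\wedge\tau}g(\hat X_n(j))$ is a nonnegative supermartingale, and letting $k\to\infty$ (using $z\ge0$ and monotone convergence) gives $\mathbb{E}_{x_0}\bigl[\sum_{j<\tau}g(\hat X_n(j))\bigr]\le z(x_0)$. Splitting the sum defining $u(x_0)$ at $\tau$, and using the strong Markov property, the memorylessness of $\mathsf{T}$, and $u(\hat X_n(\tau))\le u(x_0)$, one obtains
\[
u(x_0)\ \le\ \mathbb{E}_{x_0}\Bigl[\sum_{j<\tau}g(\hat X_n(j))\Bigr]+\mathbb{E}_{x_0}\bigl[\beta^{\tau}\,u(\hat X_n(\tau))\bigr]\ \le\ z(x_0)+\mathbb{E}_{x_0}[\beta^{\tau}]\,u(x_0).
\]

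Two quantitative inputs remain (if $n$ is bounded by a function of $d$ the lemma already follows from the crude bound $\|R_n^\omega g\|_\infty\le n^2\|g\|_\infty$, so assume $n$ large). The first, an easy one, is that $\mathbb{E}_{x_0}[\beta^{\tau}]\le1-\eta(d)$ for some $\eta(d)>0$: the balanced walk is a martingale whose coordinates have conditional increment variance at most $1$, so Doob's $L^2$ maximal inequality gives $\varepsilon_d>0$, depending only on $d$, with $\mathbb{P}_{x_0}(\tau>\varepsilon_d n^2)\ge\tfrac12$; meanwhile $\mathbb{P}(\mathsf{T}\le\varepsilon_d n^2)\ge1-e^{-\varepsilon_d}>0$; since these two events are independent and their intersection forces $\mathsf{T}<\tau$, we get $1-\mathbb{E}_{x_0}[\beta^{\tau}]=\mathbb{P}_{x_0}(\mathsf{T}<\tau)\ge\tfrac12(1-e^{-\varepsilon_d})=:\eta(d)$. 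The second, which I expect to be \textbf{the main obstacle}, is the discrete Aleksandrov--Bakelman--Pucci estimate
\[
z(x_0)\ \le\ C(d)\,n\Bigl(\sum_{x\in(D')^{\circ}}|Mz(x)|\Bigr)^{1/d}=C(d)\,n\Bigl(\sum_{x\in(D')^{\circ}}\bigl(\tfrac{g(x)}{c(\mathscr{E},x)}\bigr)^{d}\Bigr)^{1/d}\le 2\,C(d)\,n^{2}\,\bigl\|\tfrac{g(x)}{c(\mathscr{E},x)}\bigr\|_{d},
\]
where the middle step is the Monge--Ampère relation $|Mz(x)|=(f(x)/c(\mathscr{E},x))^{d}=(g(x)/c(\mathscr{E},x))^{d}$ from Theorem~\ref{Uniqueness} and the last uses that $(D')^{\circ}$ embeds into a fundamental domain, so the unnormalised sum is at most $(2n)^{d}\|g/c\|_{d}^{d}$. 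The first, genuinely substantial, inequality is the heart of the matter, and I would prove it along the lines of Krylov~\cite{Krylov1971}, as adapted for the discrete Monge--Ampère equation in~\cite{Lawler1982}: since $z$ is concave it has a nonempty super-differential $\partial^{+}z(x)$ at every $x\in(D')^{\circ}$; a sliding-hyperplane argument (using $z\ge0$ on $D'$ and $z=0$ on $\partial D'$) shows that $\bigcup_{x\in(D')^{\circ}}\partial^{+}z(x)$ contains the Euclidean ball of radius of order $z(x_0)/\operatorname{diam}(D')$, while at each such $x$ the Lebesgue measure of $\partial^{+}z(x)$ is bounded by $C(d)\,|Mz(x)|$; comparing these measures in slope space yields $\bigl(z(x_0)/n\bigr)^{d}\lesssim\sum_{(D')^{\circ}}|Mz(x)|$, which is the first displayed inequality. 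It is precisely here that relinquishing uniform ellipticity forces the weight $c(\mathscr{E},x)$ and the exponent $d$ to appear exactly as in the statement, and where the ideas of~\cite{Krylov1971} are indispensable.

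Combining the two displays, $u(x_0)\le z(x_0)+(1-\eta(d))u(x_0)$, hence $\eta(d)\,u(x_0)\le z(x_0)\le 2C(d)n^{2}\|g/c\|_{d}$; since $\|R_n^\omega g\|_\infty=u(x_0)$, this is the lemma with $c_1(d):=2C(d)/\eta(d)$. I note that Assumption~\ref{assump:1} plays no role in this fixed-$n$ bound --- it enters only when $n\to\infty$ --- and that ellipticity (Assumption~\ref{assump:2}) is used only to guarantee that $\mathscr{A}(\mathscr{E},f)$ is nonempty, hence that $z(x_0)$ is finite.
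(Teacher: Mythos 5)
Your proposal is correct and follows essentially the same strategy as the paper's proof: the Krylov/ABP bound on the Monge--Ampère solution furnished by Theorem~\ref{Uniqueness}, the supersolution/supermartingale comparison giving $\mathbb{E}_{x_0}[\sum_{j<\tau}g]\le z(x_0)$, and a contraction for $\|R_n^\omega g\|_\infty$ obtained by controlling, via Doob, the probability that the walk leaves a ball of radius $O(n)$ before the geometric clock rings. The only implementation difference is that you centre the comparison ball at the maximiser $x_0$ of $R_n^\omega g$ and get the contraction directly from $u(\hat X(\tau))\le u(x_0)$ together with an independent geometric clock, whereas the paper uses a ball $D_{Kn}\supset T_n$ centred at the origin and splits the tail of the resolvent sum on $\{\tau_{Kn}>n^2\}$ versus $\{\tau_{Kn}\le n^2\}$; these are equivalent and your version avoids choosing the constant $K$ at the cost of a slightly worse explicit contraction factor, which is immaterial.
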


To show that~\ref{3} $\implies$~\ref{2}, we first state the so called multidimensional ergodic theorem[Chapter 6, Theorem 2.8 of~\cite{Krengel1985}].
\begin{lemma}~\label{4}
    For each $n\geq 1$, define the sequence of measures $\mu_n$, \[
    \mu_n = \frac{1}{(2n)^d} \sum_{x \in T_n} \delta_{\tau_x \omega_n}.
    \]
    Then we have:\begin{enumerate}
  \item $\mu_n \to \mu$ weakly $\mu$-a.e.
  \item For every measurable $f:S(d) \to \mathbb{R}$ such that $f \in L^p(\mu)$ where $p \geq 1$, $\mu_n f \to \mu f$ $\mu$-a.e.
\end{enumerate}
\end{lemma}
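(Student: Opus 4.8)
My plan is to deduce both statements from a single application of the multidimensional pointwise ergodic theorem [Chapter~6, Theorem~2.8 of~\cite{Krengel1985}], applied to the measure-preserving, ergodic $\mathbb{Z}^d$-action $\{\tau_x\}$ on $(\mathscr{S}(d),\mathcal{B}(d)',\mu)$ and to the averaging sequence of cubes $T_n=\{-n+1,\dots,n\}^d$, which is an admissible averaging sequence for that theorem. The only genuine point is to transfer the Cesàro averages from the periodised environment $\omega_n$ back to $\omega$ itself. Ergodicity of $\mu$ is precisely what forces the ergodic limits below to be the constants $\mu f$ rather than conditional expectations over the invariant $\sigma$-algebra.

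\emph{Part (2).} I would set $\tilde f(\eta):=f(\eta(o))$ for $\eta\in\mathscr{S}(d)$, so that $\tilde f\in L^p(\mu)\subseteq L^1(\mu)$ since $\mu$ is a probability measure, and observe that by definition of $\mu_n$,
\[
\mu_n f=\frac{1}{(2n)^d}\sum_{x\in T_n}f\bigl((\tau_x\omega_n)(o)\bigr)=\frac{1}{(2n)^d}\sum_{x\in T_n}f\bigl(\omega_n(x)\bigr).
\]
Because $\omega_n(x)=\omega([x]_{\sim})$ and, for $x$ in the chosen fundamental domain $T_n$, one has $[x]_{\sim}=x$, the right-hand side equals $\frac{1}{(2n)^d}\sum_{x\in T_n}\tilde f(\tau_x\omega)$ with \emph{no} boundary error at all. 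The cited ergodic theorem (for which $\tilde f\in L^1$ already suffices, the case $p>1$ being covered a fortiori) then yields $\mu_n f\to\int\tilde f\,d\mu=\mu f$ $\mu$-a.e.

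\emph{Part (1).} Since $S(d)$ is a compact simplex, $\mathscr{S}(d)=S(d)^{\mathbb{Z}^d}$ is compact and metrizable and $C(\mathscr{S}(d))$ is separable; I would fix once and for all a countable family $\mathcal D\subseteq C(\mathscr{S}(d))$ whose linear span is uniformly dense and each element $F$ of which depends only on the coordinates in some finite set $D_{k_F}$ (such a family exists by Stone--Weierstrass). For a fixed such $F$ and any $x\in T_n$ at $\ell^\infty$-distance at least $k_F$ from $\partial T_n$, there is no wrap-around: $(\tau_x\omega_n)\big|_{D_{k_F}}=(\tau_x\omega)\big|_{D_{k_F}}$, hence $F(\tau_x\omega_n)=F(\tau_x\omega)$. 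The number of remaining $x\in T_n$ is at most $(2n)^d-(2n-2k_F)^d$, so, using $\|F\|_\infty<\infty$,
\[
\Bigl|\mu_n F-\frac{1}{(2n)^d}\sum_{x\in T_n}F(\tau_x\omega)\Bigr|\le 2\|F\|_\infty\bigl(1-(1-k_F/n)_+^d\bigr)\longrightarrow 0,
\]
while $\frac{1}{(2n)^d}\sum_{x\in T_n}F(\tau_x\omega)\to\mu F$ $\mu$-a.e.\ by the same multidimensional ergodic theorem ($F$ is bounded, hence in $L^1(\mu)$). Intersecting the countably many $\mu$-full events on which $\mu_n F\to\mu F$ holds for $F\in\mathcal D$ gives one $\mu$-full event on which this holds simultaneously for all $F\in\mathcal D$; a standard $3\varepsilon$ argument — writing an arbitrary $G\in C(\mathscr{S}(d))$ as $F+(G-F)$ with $F\in\mathrm{span}\,\mathcal D$ and $\|G-F\|_\infty$ small, and using that each $\mu_n$ is a probability measure — upgrades this to $\mu_n G\to\mu G$ for every $G\in C(\mathscr{S}(d))$, i.e.\ $\mu_n\to\mu$ weakly, $\mu$-a.e.

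\emph{On the main difficulty.} Once the multidimensional ergodic theorem is granted there is essentially no obstacle: part (2) needs no boundary correction because the average defining $\mu_n f$ runs over the fundamental domain $T_n$, on which $\omega_n$ and $\omega$ agree, and part (1) is handled by the $o(n^d)$ boundary count above. The only things requiring a moment's care are verifying that the cubes $T_n$ form an admissible averaging sequence for the cited theorem (one may, if desired, replace $T_n$ by the centred cube $\{-n,\dots,n\}^d$ at the cost of an $o(1)$ error) and that the countable class $\mathcal D$ genuinely determines weak convergence. The same two ingredients in fact yield $\mu_n F\to\mu F$ $\mu$-a.e.\ for any fixed \emph{bounded measurable} cylinder function $F$ on $\mathscr{S}(d)$, the exceptional null set then depending on $F$.
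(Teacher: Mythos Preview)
Your proposal is correct and aligns with the paper's approach: the paper does not prove Lemma~\ref{4} at all but simply introduces it as ``the so called multidimensional ergodic theorem [Chapter~6, Theorem~2.8 of~\cite{Krengel1985}]'' and proceeds to use it. Your write-up in fact supplies the details the paper omits --- in particular the observation that for $x\in T_n$ one has $\omega_n(x)=\omega(x)$ (so Part~(2) is an immediate corollary of the cited theorem) and the $o(n^d)$ boundary correction plus separability argument needed to pass from $\tau_x\omega_n$ to $\tau_x\omega$ for general cylinder test functions in Part~(1).
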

First, we see the proof of~\ref{2}, assuming~\ref{3}.

\textit{Proof of Theorem~\ref{2}:} Consider $R_n=R^{\omega_n}_n $ as the following map between normed spaces, \[
R_n: l^d(T_n) \to l^{\infty}(T_n).
\]
Then the dual of $R_n $ is the map\[
R^*_n:l^1 (T_n) \to l^{\frac{d}{d-1}}(T_n).   
\]
Let the invariant measure for the random walk on the group $T_n$ given by $\{[X_{\mathscr{E}(\omega_n)}(k)]_\sim\}_{k \geq 0}$ be $\frac{\phi_n(x)}{(2n)^d}$. Note that $\phi_n $ is the density with respect to the normalised counting measure on $T_n$. Therefore, one can check (or refer to Section 2 of~\cite{guo2012quenched}) the invariant measure for $\{\omega_n(\hat{X}_k)\}_{k \geq 0}$ is given by \[
\lambda_n = \sum_{ x \in T_n} \delta_{\tau_x\omega_n} \frac{\phi
_n(x)}{(2n)^d}. 
\]
Since $\mathscr{S}(d)$ is compact with respect to the product topology, there exists a subsequence $n_k$ such that \[
\lambda_{n_k} \to \lambda~\text{weakly}
\]
for some measure $\lambda$ on $\mathscr{S}(d)$ and $K^{\mathscr{E}}_d$ is invariant under $\lambda$.
Now observe that \[
||R^*_n \phi_n||_{\frac{d}{d-1}}= n^2||\phi_n||_{\frac{d}{d-1}} \leq c_1n^2||\frac{\phi_n(x)}{c(\mathscr{E},x)}||_{1} .
\] By log-convexity of $l_p$ norms and Hölder's Inequality, 
\[
||\phi_n||_{\frac{p}{p-1}}^\frac{p}{d} \leq c_1||\phi_n||_{\frac{p}{p-1}}|| \frac{1}{c(\mathscr{E},x)}||_p.
\]
Therefore
\[
||\phi_n||_{\frac{p}{p-1}} \leq c_1^{\frac{d}{p-d}} || \frac{1}{c(\mathscr{E},x)}||_p^{\frac{d}{p-d}} < \infty.
\]
By Lemma~\ref{4}, \[
\lim_{n \to \infty} || \frac{1}{c(\mathscr{E},x)}||_p^{\frac{d}{p-d}} =\Big [ \int_{\mathscr{S}(d)} c^{-p} d\mu \Big] ^{\frac{d}{p(p-d)}} <\infty \quad \mu\text{-a.e.}
\]
This shows \[
\limsup_{n \to \infty}||\frac{d\lambda_n}{d\mu_n}||_{\frac{p}{p-1}} < C \quad \mu\text{-a.e.}
\]
for some constant $C$. 

Since $\lambda_n$ extends as a measurable function over the entire $\mathscr{S}(d)$, we have that the family of functions \[
\big\{\frac{d\lambda_n}{d\mu_n}\big\}_{n \geq 1}
\]
is uniformly integrable with respect to $\mu$. Therefore by Dunford-Pettis Theorem, $\lambda\ll \mu$ and \[
\int_{\mathscr{S}(d)} |\frac{d\lambda}{d\mu}|^\frac{p}{p-1} d\mu < C^{\frac{p}{p-1}}.
\] 
Referring to arguments from~\cite{BolthausenSznitman2002}, $\lambda$ is ergodic and $\lambda\ll \mu$ and $\lambda\gg\mu$. This concludes the proof.

Thus, in view of the arguments in this section, we shall prove Lemma~\ref{3} in the next section. 
\section{Proof of Lemma~\ref{3}:}
We assume the Uniqueness Principle[Theorem~\ref{Uniqueness}] to be true. We first deduce Lemma~\ref{3} under this assumption and then prove Theorem~\ref{Uniqueness} in the next section.

\textit{Proof of Lemma~\ref{3} :}
Let $\omega \in \mathscr{S}_n$ and let $f: D_n \to [0,\infty)$.
Let \begin{align*}
&\tau_n = \inf \{m: X_{\mathscr{E}(\omega)}(m) \in \partial D_n\} \\
&Qf(x)
= \mathbb{E}_x[ \sum_{k=0}^{\tau_n} f(X_{\mathscr{E}(\omega)}(k))],
\end{align*}
where the expectation is taken assuming $X_{{\mathscr{E}(\omega)}}(0) = x \in D_n^{\circ}$ 

The first part of this proof is dedicated to proving the following lemma, \begin{lemma}~\label{old} For every $f: D_n \to [0,\infty),$
    \[||Qf||_{\infty}\leq c_2 n^2||\frac{f}{c(\mathscr{E},x)}||_{d,D_n}
    \]
    for some $c_2\equiv c_2(d)>1$.
\end{lemma}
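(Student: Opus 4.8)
The plan is to realize $Qf$ as the solution of a discrete Dirichlet problem, to sandwich it below the concave solution of the Monge--Ampère equation furnished by Theorem~\ref{Uniqueness}, and then to estimate that solution by a discrete Alexandrov--Bakelman--Pucci (ABP) inequality. I may assume $f\equiv 0$ on $\partial D_n$ (its boundary values contribute only one harmless term to $Qf$). First I would observe that for $x\in D_n^\circ$ a one-step decomposition gives $Qf(x)=f(x)+L_{\mathscr{E}(\omega)}Qf(x)$, i.e.\ $(I-L_{\mathscr{E}(\omega)})Qf=f$ on $D_n^\circ$ with $Qf\equiv 0$ on $\partial D_n$ and $Qf\ge 0$; ellipticity makes the killed walk exit $D_n$ almost surely, and the finiteness of $Qf$ will come out of the comparison below after first truncating $\tau_n$ at a level $m$, arguing for the finite quantities $Q_mf(x)=\mathbb{E}_x[\sum_{k=0}^{\tau_n\wedge m}f(X_{\mathscr{E}(\omega)}(k))]$, and letting $m\uparrow\infty$. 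Because $\mathscr{E}$ is balanced, $(I-L_{\mathscr{E}(\omega)})Qf(x)=-\sum_{i=1}^d\mathscr{E}_i(\omega)(x)\,\Delta_i Qf(x)$, and $c(\mathscr{E},x)^d=\prod_{i=1}^d\mathscr{E}_i(\omega)(x)$.

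The next step is a comparison principle. Let $z\in\mathscr{A}(\mathscr{E},f)$ be the function of Theorem~\ref{Uniqueness}: concave, $z\equiv 0$ on $\partial D_n$, with $|Mz(x)|^{1/d}=f(x)/c(\mathscr{E},x)$ on $D_n^\circ$. For any $u\in\mathscr{A}(\mathscr{E},f)$, concavity gives $\Delta_i u(x)\le 0$, so $|Mu(x)|=\prod_{i=1}^d(-\Delta_i u(x))$, and the arithmetic--geometric mean inequality yields
\[
\sum_{i=1}^d\mathscr{E}_i(\omega)(x)\,(-\Delta_i u(x))\ \ge\ d\Bigl(\prod_{i=1}^d\mathscr{E}_i(\omega)(x)\Bigr)^{1/d}|Mu(x)|^{1/d}\ =\ d\,c(\mathscr{E},x)\,|Mu(x)|^{1/d}\ \ge\ d\,f(x)\ \ge\ f(x),
\]
so $(I-L_{\mathscr{E}(\omega)})u\ge f$ on $D_n^\circ$; on the other hand $(I-L_{\mathscr{E}(\omega)})Q_mf\le f$ there, since $Q_mf(x)=f(x)+\sum_{v\in V_d}\mathscr{E}(\omega)(x,v)\,Q_{m-1}f(x+v)\le f(x)+L_{\mathscr{E}(\omega)}Q_mf(x)$. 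Hence $w:=u-Q_mf$ satisfies $w\ge L_{\mathscr{E}(\omega)}w$ on $D_n^\circ$ and $w=0$ on $\partial D_n$; a negative interior minimum of $w$ would, by ellipticity (every $\mathscr{E}(\omega)(x,v)>0$) and connectedness of $D_n$, propagate to $\partial D_n$, a contradiction, so $w\ge 0$, i.e.\ $Q_mf\le u$. Taking the infimum over $u\in\mathscr{A}(\mathscr{E},f)$, invoking the variational formula of Theorem~\ref{Uniqueness}, and letting $m\uparrow\infty$ gives $0\le Qf\le z$ on $D_n$ (in particular $Qf<\infty$).

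It then remains to bound $\max_{D_n}z$ for this concave $z$. Here I would use the discrete Alexandrov estimate: for a concave $z$ vanishing on $\partial D_n$,
\[
\max_{D_n}z\ \le\ C(d)\,n\Bigl(\sum_{x\in D_n^\circ}|Mz(x)|\Bigr)^{1/d},
\]
proved by a geometric covering argument in the spirit of Krylov~\cite{Krylov1971} and Lawler~\cite{Lawler1982}: if $z$ peaks at $x_0\in D_n^\circ$ with value $\mathcal{M}$, then (using $z=0$ on $\partial D_n$, via a cone/slope bound) the subdifferential image of $z$ over $D_n$ contains a Euclidean ball of radius $\gtrsim\mathcal{M}/n$, while its Lebesgue measure is comparable to the total discrete Monge--Ampère mass $\sum_{x\in D_n^\circ}|Mz(x)|$; comparing these gives $(\mathcal{M}/n)^d\lesssim\sum_{x}|Mz(x)|$. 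Substituting $|Mz(x)|=(f(x)/c(\mathscr{E},x))^d$, using $f\equiv 0$ on $\partial D_n$ to replace the sum over $D_n^\circ$ by the sum over $D_n$, and $|D_n|\le C(d)n^d$ to pass to the normalized $l^d$-norm, one gets $\max_{D_n}z\le C(d)\,n\,|D_n|^{1/d}\,\|\,f/c(\mathscr{E},\cdot)\,\|_{d,D_n}\le c_2(d)\,n^2\,\|\,f/c(\mathscr{E},\cdot)\,\|_{d,D_n}$. Since $\|Qf\|_\infty\le\|z\|_\infty=\max_{D_n}z$, this is the claim.

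The hard part is the discrete Alexandrov/ABP inequality above: transferring the continuum Monge--Ampère estimate onto the lattice requires care in choosing the concave (piecewise-affine) extension of $z$, in bounding the overlaps of the subdifferential images of neighbouring lattice points so that their total measure is still controlled by $\sum|Mz(x)|$, and in keeping the cone lower bound $\gtrsim\mathcal{M}/n$ after discretization near $\partial D_n$; this is precisely where Krylov's technique for the discrete Monge--Ampère equation, as adapted by Lawler, does the real work. The a priori finiteness of $Qf$ and the normalization $f|_{\partial D_n}\equiv 0$ are minor technical points by comparison.
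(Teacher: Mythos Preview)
Your proposal is correct and follows essentially the same route as the paper: compare $Qf$ to the Monge--Amp\`ere supersolution $z$ of Theorem~\ref{Uniqueness} via the AM--GM inequality, then bound $\|z\|_\infty$ by a discrete Alexandrov estimate. For the comparison $Qf\le z$ you use a discrete minimum principle on $w=u-Q_mf$, while the paper uses the equivalent optional-stopping formulation $\mathbb{E}_x[z(X_{\mathscr{E}(\omega)}(j\wedge\tau_n))-z(x)]\le -\mathbb{E}_x\sum_{k<j\wedge\tau_n}f(X_{\mathscr{E}(\omega)}(k))$; both yield $Qf\le z$. The one place you overcomplicate matters is the ABP step: the paper works directly with the discrete boxes $I(x)=\{a:z(x+e_i)-z(x)\le a_i\le z(x)-z(x-e_i)\}$, shows via a supporting-hyperplane lemma that the cube $\{\|a\|_\infty\le\|z\|_\infty/(4n)\}$ is contained in $\bigcup_{x\in D_n^\circ}I(x)$, and then just uses $\mathrm{meas}(A)\le\sum_x\mathrm{meas}(I(x))=\sum_x|Mz(x)|$. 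No piecewise-affine extension is needed, and no overlap control is needed since only the one-sided inequality is used; the ``hard part'' you flag is in fact quite short.
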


Lemma~\ref{old} is equivalent to Lemma~\ref{new} which states that, 
\begin{lemma}~\label{new}
    Let $f: D_n \to [0, \infty)$ be any function. Let $z$ be the concave function guaranteed by Theorem~\ref{Uniqueness}. Then there exists $c_3>1$, dependent only on the dimension such that\[
    ||z||_{\infty} \leq c_3  n^2||\frac{f}{c(\mathscr{E},x)}||_{d,D_n}.
    \]
\end{lemma}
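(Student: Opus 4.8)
The plan is to establish the equivalence of Lemmas~\ref{old} and~\ref{new} via a probabilistic representation of the concave function $z$, and then reduce the $\|z\|_\infty$ bound to a controlled iteration of the Monge–Ampère inequality against the Green's function $Qf$. First I would identify the two sides: the function $Qf(x) = \mathbb{E}_x[\sum_{k=0}^{\tau_n} f(X_{\mathscr{E}(\omega)}(k))]$ solves the discrete Dirichlet problem $L_{\mathscr{E}(\omega)} Qf - Qf = -f$ on $D_n^\circ$ with $Qf \equiv 0$ on $\partial D_n$, and since the environment is balanced, the generator $L_{\mathscr{E}(\omega)} - \mathrm{Id}$ acts on a function $g$ at $x$ by $\sum_{i=1}^d \mathscr{E}_i(\omega)(x)\,\Delta_i g(x)$. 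Thus $\sum_i \mathscr{E}_i(\omega)(x)\,\Delta_i Qf(x) = -f(x)$ on $D_n^\circ$; in particular $Qf$ is concave (each $\Delta_i Qf(x)\le 0$ would not follow pointwise, but the weighted sum being $\le 0$ plus ellipticity and the maximum principle gives concavity — this is exactly the step handled in Lawler's treatment and I would invoke it). By the arithmetic–geometric mean inequality applied to $-\Delta_1 Qf(x),\dots,-\Delta_d Qf(x)$ with weights $\mathscr{E}_i(\omega)(x)$, one gets
\[
|M Qf(x)|^{1/d} = \Big[\prod_{i=1}^d(-\Delta_i Qf(x))\Big]^{1/d} \ge \frac{\sum_i \mathscr{E}_i(\omega)(x)(-\Delta_i Qf(x))}{\big(\prod_i \mathscr{E}_i(\omega)(x)\big)^{-1/d}\cdot d} \cdot (\text{correction}),
\]
and after cleaning up the constants using $c(\mathscr{E},x) = \big[\prod_{v\ne 0}\mathscr{E}(\omega)(x,v)\big]^{1/(2d)} = \big[\prod_i \mathscr{E}_i(\omega)(x)\big]^{1/d}$ in the balanced case, this shows $-Qf \in \mathscr{A}(\mathscr{E}, c_0 f)$ for an explicit dimensional $c_0$ — i.e. $Qf$ (up to sign and a constant rescaling of $f$) is an admissible concave function. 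Conversely, the function $z$ from Theorem~\ref{Uniqueness} satisfies $|Mz(x)|^{1/d} = f(x)/c(\mathscr{E},x)$ exactly, so again by AM–GM $\sum_i \mathscr{E}_i(\omega)(x)(-\Delta_i z(x)) \ge d\, c(\mathscr{E},x)\cdot f(x)/c(\mathscr{E},x) = d f(x)$, whence $z/d$ is a supersolution and the maximum principle (Dynkin's formula applied to $z(X_{\mathscr{E}(\omega)}(k\wedge\tau_n)) + \frac{1}{d}\sum_{j<k\wedge\tau_n} f(X_{\mathscr{E}(\omega)}(j))$, which is a supermartingale) gives $z(x) \le \frac{1}{d} Qf(x)$ pointwise; combined with the reverse comparison for $-Qf$ this pins $\|z\|_\infty$ and $\|Qf\|_\infty$ to each other up to dimensional constants. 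Hence Lemma~\ref{old} $\Leftrightarrow$ Lemma~\ref{new}, and it suffices to prove either one.

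Having reduced to the analytic statement $\|z\|_\infty \le c_3 n^2 \|f/c(\mathscr{E},\cdot)\|_{d,D_n}$, I would prove it directly using the Aleksandrov–Bakelman–Pucci-type estimate, which is the discrete Krylov/Lawler machinery. The idea: for a concave function $z$ on $D_n$ vanishing on the boundary, the maximum $\|z\|_\infty = z(x_0)$ at some interior $x_0$ is controlled by the "contact set" where the graph of $z$ meets its concave envelope; on that set one has a lower bound on the Monge–Ampère mass $\sum_{x}|Mz(x)|$ of the form $|Mz|(D_n^\circ) \ge c\, (\|z\|_\infty/n)^d$, because the gradient (difference) image of the contact set must cover a ball of radius $\sim \|z\|_\infty / n$ in the dual lattice, and the Jacobian of the gradient map is exactly $Mz$. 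Then, using $|Mz(x)|^{1/d} = f(x)/c(\mathscr{E},x)$,
\[
c\,\Big(\frac{\|z\|_\infty}{n}\Big)^d \le \sum_{x\in D_n^\circ} |Mz(x)| = \sum_{x\in D_n^\circ}\Big(\frac{f(x)}{c(\mathscr{E},x)}\Big)^d \le |D_n|\cdot \Big\|\frac{f}{c(\mathscr{E},\cdot)}\Big\|_{d,D_n}^d,
\]
and since $|D_n| \le c'(d) n^d$, taking $d$-th roots gives $\|z\|_\infty \le c_3 n^2 \|f/c(\mathscr{E},\cdot)\|_{d,D_n}$ after absorbing $n^{d}\cdot n^{d}$ under the root as $n^2$ — wait, one gets $\|z\|_\infty/n \le c^{-1/d}(|D_n|)^{1/d}\|f/c\|_{d} \le c'' n \|f/c\|_d$, i.e. $\|z\|_\infty \le c_3 n^2\|f/c\|_d$, exactly as claimed. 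The two displayed inequalities must be carefully justified at the level of the integer lattice (the gradient map becomes a discrete difference map and one must argue the covering property with the correct normalization, which introduces the factor $n$ from the diameter of $D_n$), and the normalized $l^d$ norm on $D_n$ must be related to the unnormalized sum, accounting for the $|D_n|$ factor — this bookkeeping is where the power $n^2$ precisely emerges.

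The main obstacle I expect is the discrete Aleksandrov–Bakelman–Pucci covering argument — specifically, showing that for the concave $z$ the "subdifferential image" of the set of interior maximizing lattice points contains a lattice approximation of a Euclidean ball of radius comparable to $\|z\|_\infty / \mathrm{diam}(D_n)$, and that the multiplicity of this covering is controlled by $Mz$. In the continuum this is classical, but on $\mathbb{Z}^d$ the "gradient" is a finite difference and one has to handle the fact that the discrete subdifferential of a concave lattice function is a union of lattice cubes whose total volume is $\sum |Mz(x)|$ only up to combinatorial corrections; making this rigorous with dimension-only constants is the technical heart. This is precisely the estimate that Lawler imports from Krylov~\cite{Krylov1971}, and I would follow that route, adapting it to our (non-uniformly elliptic) setting — but note that at this stage ellipticity has already been fully used to pass from the PDE to the clean Monge–Ampère identity, so the covering lemma itself is purely about concave functions and needs no ellipticity, which is what makes the argument go through under Assumptions~\ref{assump:1} and~\ref{assump:2} rather than uniform ellipticity.
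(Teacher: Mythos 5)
Your direct proof of the estimate $\|z\|_\infty \le c_3 n^2 \|f/c(\mathscr{E},\cdot)\|_{d,D_n}$ via the discrete Aleksandrov--Bakelman--Pucci covering is exactly the route the paper takes: the set $I(x)$ of supergradients of $z$ at $x$ has Lebesgue measure $|Mz(x)| = (f(x)/c(\mathscr{E},x))^d$, the image of the interior contact set covers the cube $A = \{ \|x\|_\infty \le \|z\|_\infty/(4n)\}$ by the touching-plane lemma (Lawler's Lemma~8), and comparing volumes with $|D_n| \asymp n^d$ yields the claimed $n^2$ power. This part of your argument is correct and matches the paper.

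The ``equivalence'' half of your proposal, however, has two genuine gaps, and neither direction of the equivalence is actually established by your argument. First, $Qf$ is superharmonic for the balanced generator ($\sum_i \mathscr{E}_i(\omega)(x)\Delta_i Qf(x) = -f(x) \le 0$), but superharmonic is not concave: you need every second difference $\Delta_i Qf(x) \le 0$ individually to even form $M\,Qf$ and apply AM--GM, and this fails for Green's functions already in the continuum (e.g.\ the Newtonian potential $|x|^{2-d}$ has positive second derivative in the radial direction, so even the free Green's function of $\Delta$ is not concave). Lawler does not claim concavity of $Qf$, and your parenthetical appeal to ``the step handled in Lawler's treatment'' does not exist. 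Second, your Dynkin argument runs the wrong way: from $\sum_i \mathscr{E}_i(-\Delta_i z)(x) \ge f(x)$ (i.e.\ $(L-\mathrm{Id})z \le -f$), the process $z(X(k\wedge\tau_n)) + \sum_{j<k\wedge\tau_n} f(X(j))$ is a supermartingale, and optional stopping with $z|_{\partial D_n} = 0$ gives $Qf(x) \le z(x)$, not $z(x) \le \tfrac{1}{d}Qf(x)$; the inequality sign in your ``converse'' is reversed. Fortunately, neither gap threatens the lemma: only the correctly oriented inequality $Qf \le z$ is needed to transfer the $\|z\|_\infty$ bound to $\|Qf\|_\infty$, which is how the paper deduces Lemma~\ref{old} from Lemma~\ref{new}; no bound of the form $z \le C\,Qf$ is asserted or required. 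You should drop the concavity claim for $Qf$ and the backwards Dynkin step, retain only the one-sided comparison, and your proof then coincides with the paper's.
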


\textit{Proof of Lemma~\ref{new}:}  First we note the equivalence stated above. Recall the~\textit{Monge-Ampère Operator} for a given concave function $z$ given by\[
Mz(x) = \prod_{i=1}^{d} \Delta_i z(x).
\] 
Note that from Theorem~\ref{Uniqueness} one has that \[
\sum_{i=1}^d -\mathscr{E}(x,e_i) \Delta_i z(x)  \geq |Mz|^{\frac{1}{d}} c(\mathscr{E},x) = f(x) 
\]
Therefore, one can show that for every $j\geq1$, \[
\mathbb{E}_x[ z(X_{\mathscr{E}(\omega)}(j \wedge \tau_n)) - z(X_{\mathscr{E}(\omega)}(0))] \leq  \sum_{k=0}^{(j-1) \wedge \tau_n} -f(X_{\mathscr{E}(\omega)}(k)).
\]
Letting $j \to \infty $ completes the proof. This shows that $||Qf||_{\infty} \leq ||z||_{\infty}$ thereby showing the equivalence.

Define the sets for $x \in D_n^{\circ}$,\[
I(x) = \{ a \in \mathbb{R}^d: z(x+e_i) -z(x) \leq a_i \leq z(x) -z(x-e_i), \text{for}~i=1,2,\dots, d\}.
\]
Observe that \[
meas(I(x)) = |Mz(x)|=  \frac{f(x)^d}{c(\mathscr{E},x)^d}, 
\]
where $meas(\cdot)$ is the Lebesgue measure on $\mathbb{R}^d$. 
Now we state an easily provable lemma as in~\cite{Lawler1982}. 
\begin{lemma}[Lemma 8 of~\cite{Lawler1982}]
    Let $a \in \mathbb{R}^d.$ and let $r(x) = a.x +b$ for $b>0$. If $r(x)\geq z(x)$ for every $x\in D_n$ and $\exists x_0 \in D_n ^\circ$ such that $r(x_0)=z(x_0)$, then $a \in I(x_0)$. 

\end{lemma}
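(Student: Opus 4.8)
The plan is to unwind the definitions and exploit that $r$ is affine: its one-sided differences along $\pm e_i$ are exactly $\pm a_i$, and the hypotheses $r \ge z$ on $D_n$ and $r(x_0) = z(x_0)$ pin these down against the corresponding differences of $z$. First I would note that since $x_0 \in D_n^{\circ}$, all $2d$ neighbours $x_0 \pm e_i$ lie in $D_n$; this is exactly what makes the interiority hypothesis necessary, as it guarantees that the inequality $r \ge z$ is available at every point entering the definition of $I(x_0)$ and that the quantities $z(x_0 \pm e_i)$ are defined.

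Next, fix $i \in \{1,\dots,d\}$ and compute the two one-sided differences of $r$. From $r(x) = a\cdot x + b$ one gets $r(x_0 + e_i) - r(x_0) = a_i$ and $r(x_0) - r(x_0 - e_i) = a_i$. Combining the first identity with $r(x_0) = z(x_0)$ and $r(x_0 + e_i) \ge z(x_0 + e_i)$ gives $a_i = r(x_0 + e_i) - z(x_0) \ge z(x_0 + e_i) - z(x_0)$, which is precisely the lower bound in the definition of $I(x_0)$. Symmetrically, combining the second identity with $r(x_0) = z(x_0)$ and $r(x_0 - e_i) \ge z(x_0 - e_i)$ gives $a_i = z(x_0) - r(x_0 - e_i) \le z(x_0) - z(x_0 - e_i)$, the upper bound. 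Since $i$ was arbitrary, this shows $z(x_0 + e_i) - z(x_0) \le a_i \le z(x_0) - z(x_0 - e_i)$ for all $i$, i.e. $a \in I(x_0)$.

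I do not expect a genuine obstacle here: the statement is the discrete supporting-hyperplane characterisation, and the argument is a two-line inequality chase per coordinate. The only point meriting a moment's attention is the bookkeeping that interiority of $x_0$ keeps the relevant neighbours inside $D_n$ so that $r \ge z$ may legitimately be invoked there; and one may remark in passing that the sign condition $b > 0$ is not actually used, serving only to fix $r$ as a bounded-below affine function in the ambient setup.
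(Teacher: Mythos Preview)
Your proof is correct and is exactly the standard supporting-hyperplane argument. The paper does not supply its own proof of this lemma---it merely cites Lawler~\cite{Lawler1982} and calls it ``easily provable''---so there is nothing to compare against; your two-line inequality chase per coordinate is precisely what is intended, and your remarks about interiority of $x_0$ and the irrelevance of $b>0$ are both apt.
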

Define the set \[
A= \{ x: ||x||_{\infty}\leq \frac{||z||_{\infty}}{4n} \}. 
\]
Then for $b>\frac{3||z||_{\infty}}{2}$, one has \[
a.x+b >||z||_{\infty}>z(x).
\]
Therefore there exists an $b$ such that $a.x+b \geq z(x) $ and $\exists x_0 $ such that $a.x_0 +b= z(x_0)$. Observe that \[
a.x_0 +b = a.(x_0 -x^*) + a.x^*+b \geq \frac{||z||_{\infty}}{2}>0, 
\] where $x^*$ such that $z(x^*) = ||z||_{\infty}$. This shows $x_0 \in D_n^{\circ}$. 
Therefore \[
A \subset \bigcup_{x \in D_n^\circ } I(x),
\] which gives that \[
meas(A) = \frac{||z||^d_{\infty}}{(c_4 n)^d} \leq \sum_{x \in D_n} \frac{f(x)^d}{c(\mathscr{E},x)^d},
\]
where $c_4$ depends on $d$. Therefore,\[
||z||_{\infty}\leq c_4n^2 ||\frac{f(x)}{c(\mathscr{E},x)}||_{d,D_n}.
\]
Choose $c_3= c_4 \vee 2$. This completes the proof of~\ref{new}(c).

Now to bound the resolvent[recall the statement of Lemma~\ref{3}], first we take $m= Kn$ such that $T_n \subset D_m$. We shall specify this $K$. Assume for now that this $K$ only depends on $d$, and it is a natural number.
 
We now bound the resolvent for non-negative $g:T_n\to \mathbb{R}$(and extended as mentioned before) as follows, \[
\mathbb{E}_x\big[\sum_{j=0}^{\infty} \left(1- \frac{1}{n^2}\right) ^j g(X_{\mathscr{E}(\omega_n)}(j)  \big)] \leq  \mathbb{E}_x\big[\sum_{j=0}^{\tau_{Kn}} g(X_{\mathscr{E}(\omega_n)}(j)) \big] + \mathbb{E}_x\big[\sum_{j=\tau_{Kn} +1 }^{\infty} \left(1- \frac{1}{n^2}\right) ^j g(X_{\mathscr{E}(\omega_n)}(j)  \big)],
\]
where $x \in T_n$. 
Using~\ref{old}, one has that \[
\mathbb{E}_x\big[\sum_{j=0}^{\tau_{Kn} } g(X_{\mathscr{E}(\omega_n)}(j)) \big] \leq c_2K^2n^2||\frac{g(x)}{c(\mathscr{E},x)}||_{d,D_m} \leq c'n^2 ||\frac{g(x)}{c(\mathscr{E},x)}||_{d},
\] where $c'\equiv c'(d)$.
The last inequality follows due to the periodic nature of $g$. 

For the other summand, we write it as follows, 
\begin{equation*}
\begin{aligned}
\mathbb{E}_x\Bigg[
    \sum_{j=\tau_{Kn}+1}^{\infty}
    \left(1-\frac{1}{n^2}\right)^j
    g\!\left(X_{\mathscr{E}(\omega_n)}(j)\right)
\Bigg]
&=
\mathbb{E}_x\Bigg[
    \sum_{j=\tau_{Kn}+1}^{\infty}
    \left(1-\frac{1}{n^2}\right)^j
    g\!\left(X_{\mathscr{E}(\omega_n)}(j)\right)
    \mathbf{1}_{\{\tau_{Kn}>n^2\}}
\Bigg]
\\[0.4em]
&\quad +
\mathbb{E}_x\Bigg[
    \sum_{j=\tau_{Kn}+1}^{\infty}
    \left(1-\frac{1}{n^2}\right)^j
    g\!\left(X_{\mathscr{E}(\omega_n)}(j)\right)
    \mathbf{1}_{\{\tau_{Kn}\le n^2\}}
\Bigg].
\end{aligned}
\end{equation*}
The first sum can be bounded as follows, 
\[
\mathbb{E}_x\Bigg[
    \sum_{j=\tau_{Kn}+1}^{\infty}
    \left(1-\frac{1}{n^2}\right)^j
    g\!\left(X_{\mathscr{E}(\omega_n)}(j)\right)
    \mathbf{1}_{\{\tau_{Kn}>n^2\}}
\Bigg] \leq \frac{1}{e} ||R_n g||_\infty .
\]
For the other summand, observe that \begin{align*}
&\mathbb{E}_x\Bigg[
    \sum_{j=\tau_{Kn}+1}^{\infty}
    \left(1-\frac{1}{n^2}\right)^j
    g\!\left(X_{\mathscr{E}(\omega_n)}(j)\right)
    \mathbf{1}_{\{\tau_{Kn}\le n^2\}}
\Bigg] \leq \mathbb{P}_x[\tau_{Kn}\leq n^2] \cdot \|R_n g\|_\infty.\\
\end{align*}
The last line follows from the Strong Markov property since \begin{align*}
&\{\tau_{Kn} \le n^2\} \in \mathcal{F}_{\tau_{Kn}} = \{ A : \forall\, t \in \mathbb{N},\, \{\tau_{Kn} \le t\} \cap A \in \mathcal{F}_{t+1} \} \\
&\sum_{j=\tau_{Kn}+1}^{\infty} \left(1-\frac{1}{n^2}\right)^j g\!\left(X_{\mathscr{E}(\omega_n)}(j)\right) \in \mathcal{F}'_{\tau_{Kn}} = \sigma\!\left( \{ X_{\mathscr{E}(\omega)}(k) : k > \tau_{Kn} \} \right).
\end{align*}
Since $\{|X_{\mathscr{E}(\omega)}(k)|_1^2\}_{k \geq 1}$ is a submartingale, we have by Doob's inequality that \[
\mathbb{P}_x[ \tau_{Kn} \leq n^2] = \mathbb{P}_x [ \max_{0\leq m\leq n^2} |X_{{\mathscr{E}(\omega_n)}}(m)|_1^2 \geq K^2 n^2 ] \leq \frac{n^2 + |x|_1^2}{K^2 n^2}. 
\]
Furthermore, observe that since $x \in T_n$, we must have $|x|^2_1 \leq n^2 d^2$ for every $x \in T_n$. 
Therefore, we have \[
\sup_{x\in T_n} \mathbb{P}_x[ \tau_{Kn} \leq n^2]  \leq \sup_{x \in T_n} \frac{n^2 + |x|_1^2}{K^2 n^2}  = \frac{d^2 +1}{K^2}. 
\]
Now choose $K$ such that \begin{align*}
\frac{d^2 +1 }{K^2} 
&< \frac{1}{2}- \frac{1}{e},\quad T_n \subset D_m. 
\end{align*}
Therefore, combining the deductions we have that \[
||R_n g||_\infty \leq \frac{1}{2} ||R_n g||_\infty + c'n^2||\frac{g(x)}{c(\mathscr{E},x)}||_d \implies ||R_n g||_\infty \leq 2c'n^2||\frac{g(x)}{c(\mathscr{E},x)}||_d.  
\]
By construction, the assumptions of Lemma~\ref{3} are satisfied, thereby completing the proof. 

Therefore, it is now enough to prove Theorem~\ref{Uniqueness}.

\section{Proof of Theorem~\ref{Uniqueness}}
\textit{Proof of Theorem~\ref{Uniqueness}}: We begin by noting that the class $\mathscr{A}(\mathscr{E},f)$ is non-empty. Indeed, consider \[
u(x) = n(n+1) -|x|_1(|x|_1 +1).
\]
Observe that since $\Delta_i u(x) \leq -2$, using ellipticity, one can conclude that $\exists \gamma >0$  such that $\gamma u \in \mathscr{A}$.

We now make two easy observations without proof. The first one is that \[
z_1,z_2 \in \mathscr{A}(\mathscr{E},f) \implies z_1 \wedge z_2 \in \mathscr{A}(\mathscr{E},f). 
\]
The second observation is that \[
z(x)=\inf_{u \in \mathscr{A}(\mathscr{E},f)} u(x) \in \mathscr{A}(\mathscr{E},f). \]
We claim that this function satisfies the second condition of the Theorem. Indeed if it does not then $\exists x_0 \in D_n^{\circ}$ such that \[
|Mz|^{\frac{1}{d}} > \frac{f(x_0)}{c(\mathscr{E},x_0)} \geq0
\]
One can choose $0<\beta < z(x_0)$ such that \[
\Big[ \prod_{i=1}^d (2\beta - z(x+e_i) -z(x-e_i))\Big]^{\frac{1}{d}} > \frac{f(x_0)}{c(\mathscr{E},x_0)}
\]
It is again easy to show that,
\[
z'(x) =
\begin{cases}
\beta, & \text{if } x = x_0, \\
z(x), & \text{otherwise}.
\end{cases}
\]
is a member of $\mathscr{A}(\mathscr{E},f)$. This contradicts the choice of $z(x)$. 
 
\section{Preparation for the proofs of Theorem~\ref{Master}:}
We begin this section by discussing the theory of~\textit{Action of Linear Transformations}. As mentioned earlier, for this proof and from now on, we shall assume $\mathscr{E} = \operatorname{Id}$ unless otherwise indicated.
\subsection{Action of Linear Transformations:}~\label{Action}
Fix $\omega \in \mathscr{S}(d)$. For a given linear transformation $T \in \mathrm{Hom}_{\mathbb{Z}}(\mathbb{Z}^d, \mathbb{Z}^{d})$, the action of $T$ on $\omega = \{\omega(x,v)\}_{x \in \mathbb{Z}^d, v \in V_d}$ is given by 
\[
T.\omega(x,v) := T(\omega)(x,v) = \omega(Tx,v).
\]
Similarly, we also derive the form of the combined action of the shift and the transformation, which we state in the following theorem. 
\begin{theorem}{\label{setup1}}
    $\tau_xT.\omega = T(\tau_{Tx} \omega).$
\end{theorem}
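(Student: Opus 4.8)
The plan is to prove the identity by a direct computation: evaluate both sides at an arbitrary site $y \in \mathbb{Z}^d$ and direction $v \in V_d$, unfold the definitions of the shift and of the action of $T$, and then use that $T$ is additive. Recall that for any $\eta \in \mathscr{S}(d)$ one has $(\tau_x \eta)(y) = \eta(x+y)$ and $(T.\eta)(y,v) = \eta(Ty,v)$; since the latter holds for every $v \in V_d$, this says $(T.\eta)(y) = \eta(Ty)$ as elements of $S(d)$. The notation $\tau_x T.\omega$ is read as $\tau_x(T.\omega)$, i.e.\ first transform by $T$, then shift by $x$.

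First I would compute the left-hand side. With $\eta := T.\omega$, for every $y \in \mathbb{Z}^d$ and $v \in V_d$,
\[
\tau_x T.\omega(y,v) = (\tau_x \eta)(y,v) = \eta(x+y,v) = (T.\omega)(x+y,v) = \omega\bigl(T(x+y),\,v\bigr).
\]
Next I would compute the right-hand side. Put $\zeta := \tau_{Tx}\omega$, so that $\zeta(y,v) = \omega(Tx + y, v)$ for every $y$ and $v$. Then
\[
T(\tau_{Tx}\omega)(y,v) = (T.\zeta)(y,v) = \zeta(Ty,v) = \omega(Tx + Ty,\, v).
\]

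Finally, since $T \in \mathrm{Hom}_{\mathbb{Z}}(\mathbb{Z}^d, \mathbb{Z}^d)$ is a homomorphism of abelian groups, $T(x+y) = Tx + Ty$, so the two displayed expressions coincide for all $y \in \mathbb{Z}^d$ and $v \in V_d$; hence $\tau_x T.\omega = T(\tau_{Tx}\omega)$ as environments. There is essentially no obstacle in this argument — it is a bookkeeping consequence of the definitions — and the only point that requires care is the placement of the shift: on the right-hand side the shift is by $Tx$ rather than by $x$, which is precisely the content of the identity and the reason it is worth recording before using it in the sequel.
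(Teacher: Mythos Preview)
Your proof is correct and is essentially the same as the paper's: both evaluate each side at an arbitrary $(y,v)$, unfold the definitions of $\tau_x$ and of the action $T.\omega$, and use additivity of $T$ to match $\omega(T(x+y),v)$ with $\omega(Tx+Ty,v)$. The paper just writes the two lines more tersely.
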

\begin{proof}
    Observe \[(\tau_xT.\omega)(y,v) = T.\omega(x+y,v) = \omega(Tx +Ty,v).\] We also have that \[
    \omega(Tx +Ty,v) = T(\tau_{Tx} \omega)(y,v).
    \]
\end{proof}
This establishes the assertion.
We also describe the action on the transition kernel. We note that we derive these for permutation maps, i.e.,  \[
T(V_d) = V_d. 
\] 
It is straightforward that such a map is invertible.
 
The induced random walk under the transformation $T$ in $\mathbb{Z}^d$ started from $x\in \mathbb{Z}^{d}$ is given by \[
\mathbb{P}_0^{\omega} ( T(X_{\omega}(n)) = T(X_{\omega}({n-1})) + T(e_i)) = \mathbb{P}_0^{\omega}(X_{\omega}(n) =X_{\omega}({n-1}) +e_i) = \omega(X_{n-1},e_i).
\] 
Call the transition kernel of the environment under this new random walk, i.e., the process $\tau_{T(X_{\omega}(n))}\omega$, $K^T_d(\omega, B)$. Then we have the result, 
\begin{theorem}{\label{setup2}}
    $K_d^{\operatorname{Id}} \equiv K_d(T.\omega, T(B)) = K^T_d( \omega, B)$ for every $B \in {\mathcal{B}(d)'}$.
\end{theorem}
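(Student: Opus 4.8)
The plan is to prove the identity by a direct computation, unwinding both sides into an explicit finite sum over $V_d$ and checking agreement term by term. First I would record the closed form of the left-hand kernel: since $\mathscr{E} = \operatorname{Id}$, the environment-process kernel is $K_d(\eta, B) = \sum_{v \in V_d} \eta(o,v)\,\delta_{\tau_v \eta}(B)$, so that
\[
K_d(T.\omega, T(B)) = \sum_{v \in V_d} (T.\omega)(o,v)\,\delta_{\tau_v(T.\omega)}(T(B)).
\]
Then I would simplify each factor. Because $T$ is $\mathbb{Z}$-linear, $To = o$, whence $(T.\omega)(o,v) = \omega(To,v) = \omega(o,v)$. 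For the point masses, Theorem~\ref{setup1} gives $\tau_v(T.\omega) = T(\tau_{Tv}\omega)$; and since $T$ is a permutation of $V_d$, hence an invertible element of $\mathrm{Hom}_{\mathbb{Z}}(\mathbb{Z}^d,\mathbb{Z}^d)$, the map $\eta \mapsto T.\eta$ is a bijection of $\mathscr{S}(d)$ with measurable (indeed continuous) inverse, so $T(B) \in \mathcal{B}(d)'$ and $\delta_{T(\tau_{Tv}\omega)}(T(B)) = \delta_{\tau_{Tv}\omega}(B)$. This reduces the left-hand side to $\sum_{v \in V_d} \omega(o,v)\,\delta_{\tau_{Tv}\omega}(B)$.

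Next I would identify the right-hand side. Starting the walk at $X_\omega(0)=o$, one has $T(X_\omega(0)) = o$, and the one-step computation displayed just before the theorem gives $T(X_\omega(1)) = Tv$ with probability $\omega(o,v)$ for each $v \in V_d$ (the case $v=0$ included, since $T0=0$). Hence the process $\tau_{T(X_\omega(n))}\omega$, started from $\tau_o\omega = \omega$, moves to $\tau_{Tv}\omega$ with probability $\omega(o,v)$, i.e.
\[
K^T_d(\omega, B) = \sum_{v \in V_d} \omega(o,v)\,\delta_{\tau_{Tv}\omega}(B),
\]
which is exactly the expression obtained for $K_d(T.\omega, T(B))$, completing the argument. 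For a general base point one re-centres the walk so that its starting site is sent to the origin and invokes Theorem~\ref{setup1} once more to translate the environment accordingly.

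I do not expect a genuine obstacle here; the argument is essentially bookkeeping. The one point requiring care is the measurability and bijectivity of $\eta \mapsto T.\eta$, which is what legitimises both the identity $\delta_{T\eta}(T(B)) = \delta_\eta(B)$ and the fact that $T(B)$ is again a Borel set; this rests on the invertibility of $T$ on $\mathbb{Z}^d$ noted in the excerpt. A secondary point is to confirm that $\tau_{T(X_\omega(n))}\omega$ is genuinely a Markov chain so that ``its transition kernel'' is well defined — but this is immediate, since $n \mapsto T(X_\omega(n))$ is itself a Markov chain on $\mathbb{Z}^d$ (a bijective relabelling of the lattice walk) and the environment-viewed-from-the-particle construction applied to it yields a Markov chain by the standard argument.
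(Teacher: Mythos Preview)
Your proposal is correct and follows essentially the same route as the paper: both arguments write the kernels as finite sums over $V_d$, use $To=o$ to match the weights $(T.\omega)(o,v)=\omega(o,v)$, and invoke Theorem~\ref{setup1} together with the bijectivity of $\eta\mapsto T.\eta$ to identify $\delta_{\tau_v(T.\omega)}(T(B))$ with $\delta_{\tau_{Tv}\omega}(B)$. You are somewhat more explicit about the measurability and invertibility issues than the paper, but the substance is identical.
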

\begin{proof}
    Observe that, 
    \[
    K_d^T(\omega,B) 
= \sum_{i=1}^d \Big[ 
    \omega(o, e_i)\,\delta_{\tau_{T(e_i)}\omega}(B)  
    + \omega(o, -e_i)\,\delta_{\tau_{T(-e_i)}\omega}(B) 
\Big] +  \omega(o,o) \delta_{\omega}(B).
    \]
Note that $T.\omega(o,e) = \omega(o,e)$ and by Theorem~\ref{setup1}, \[
\{\tau_{e}{T.\omega} \in T(B)\} = \{T(\tau_{T.e}\omega) \in T(B)\} = \{\tau_{T.e}\omega  \in B\} \quad~\forall e \in V_d.  
\]
This completes the proof.
\end{proof}
This also shows another fact, that is,
\begin{corollary}{\label{setup3}}
    If $\tau_{X_{\omega}(n)} \omega$ has a stationary measure $Q$, $\tau_{T(X_{\omega}(n))} \omega$ is also stationary under $Q\circ T$.  
\end{corollary}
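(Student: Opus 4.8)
The plan is to leverage Theorem~\ref{setup2}, which already identifies the transition kernel $K_d^T$ of the process $\tau_{T(X_\omega(n))}\omega$ with the transported kernel $K_d(T.\omega, T(B))$. Since stationarity is nothing more than invariance of a measure under the Markov kernel, the whole statement reduces to a change-of-variables computation. First I would unwind the definitions: a measure $Q'$ is stationary for the process $\tau_{T(X_\omega(n))}\omega$ precisely when $\int K_d^T(\omega, B)\,dQ'(\omega) = Q'(B)$ for every $B \in \mathcal{B}(d)'$. The candidate is $Q' = Q \circ T$, meaning $Q'(A) = Q(\{\omega : T.\omega \in A\})$ — equivalently, $Q'$ is the pushforward of $Q$ under the map $\omega \mapsto T.\omega$ (which is well-defined and invertible because $T$ is a permutation map, as noted just before Theorem~\ref{setup2}).

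Next I would carry out the substitution. Using Theorem~\ref{setup2}, $\int K_d^T(\omega, B)\,d(Q\circ T)(\omega) = \int K_d^T(T^{-1}.\omega', B)\,dQ(\omega')$ after the change of variables $\omega' = T.\omega$; but $K_d^T(T^{-1}.\omega', B)$ should be rewritten via the identity in Theorem~\ref{setup2} applied with $T^{-1}.\omega'$ in place of $\omega$, giving $K_d(\omega', T(B))$ — here one must be slightly careful that the identity $K_d(T.\omega, T(B)) = K_d^T(\omega, B)$, read with $\omega$ replaced by $T^{-1}.\omega'$ and using $T.(T^{-1}.\omega') = \omega'$, yields exactly $K_d(\omega', T(B)) = K_d^T(T^{-1}.\omega', B)$. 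Then the hypothesis that $Q$ is stationary for $\tau_{X_\omega(n)}\omega$, i.e. $\int K_d(\omega', C)\,dQ(\omega') = Q(C)$ for all $C$, applied with $C = T(B)$, gives $Q(T(B))$. Finally $Q(T(B)) = (Q\circ T)(B)$ by the definition of $Q\circ T$ together with invertibility of $T$ (so that $T(B)$ is again a measurable set and the preimage/image bookkeeping matches). Chaining these equalities shows $Q\circ T$ is invariant under $K_d^T$, which is the claim.

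The only genuine subtlety — and the step I would treat most carefully — is the bookkeeping around the action $\omega \mapsto T.\omega$: one needs $T$ to be a bijection of $V_d$ (a permutation map) so that $\omega \mapsto T.\omega$ is a measurable bijection of $\mathscr{S}(d)$ with measurable inverse, ensuring both that the pushforward $Q\circ T$ is a well-defined probability measure and that the change of variables in the integral is legitimate in both directions. This is exactly the hypothesis under which Theorems~\ref{setup1} and~\ref{setup2} were stated, so no new assumption is needed. Everything else is a routine unwinding of the definition of a stationary measure combined with the already-established kernel identity; there is no analytic obstacle here, only the need to keep the direction of the transformation straight.
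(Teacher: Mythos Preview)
Your approach is exactly the paper's: apply Theorem~\ref{setup2} pointwise to rewrite $K_d^T(\omega,B)$ as $K_d(T.\omega,T(B))$, then use invariance of $Q$ under $K_d$ after a change of variables. The paper's entire proof is the single display
\[
\int K_d^{T}(\omega,B)\, d(Q\circ T) \;=\; \int K_d(T.\omega,T(B))\, d(Q\circ T) \;=\; (Q\circ T)(B).
\]

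One bookkeeping slip to fix, and it is precisely the ``direction of the transformation'' issue you flagged. You first define $(Q\circ T)(A) = Q(\{\omega : T.\omega \in A\})$, i.e.\ the pushforward of $Q$ under $\omega\mapsto T.\omega$, but you later use $(Q\circ T)(B) = Q(T(B))$, which is the pushforward under $\omega\mapsto T^{-1}.\omega$. These two disagree unless $T^2=\operatorname{Id}$. It is the \emph{second} reading that makes your substitution $\omega'=T.\omega$ land in $Q$ (and that the paper intends): with the first reading one gets $\int K_d^T(T.\omega',B)\,dQ(\omega')$ rather than $\int K_d^T(T^{-1}.\omega',B)\,dQ(\omega')$, and the chain breaks. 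So fix $(Q\circ T)(B):=Q(T(B))$ from the outset; then every step you wrote is correct and coincides with the paper's argument.
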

\begin{proof}
 \[
 \int_{\mathscr{S}(d)} K_d^{T}(\omega,B) dQ \circ T = \int_{\mathscr{S}(d)} K_d(T.\omega,T(B)) dQ \circ T= Q\circ T(B) \quad\forall B \in \mathcal{B}({d})'
 \]   
\end{proof}

We shall use these notions fundamentally in our proof. We also note that showing stationarity of the local process is sufficient to exhibit its ergodicity, which we state in the following lemma. We shall prove the lemma by arguments from~\cite{BolthausenSznitman2002}.
\begin{lemma}~\label{equivalence}
    Let $\mathbb{Q}$ be a measure on $(\mathscr{S}(d), \mathcal{B}(d)')$ mutually absolutely continuous to
   $\mu$, such that the local process $\{\pi_0(\tau_{X_{\Lambda(\omega)}(m)}\omega)\}_{m \geq 0}$ is stationary. Further assume $\{\pi_0(\tau_{X_{\Lambda(\omega)}(m)}\omega)\}_{m \geq 0}=\{\pi_0(\tau_{X_{\omega}(m)}\omega)\}_{m \geq 0}$ in distribution under $\mathbb{Q}$. Then $\{\pi_0(\tau_{X_{\omega}(m)}\omega)\}_{m \geq 0}$ is stationary and ergodic under $\mathbb{Q}$.
\end{lemma}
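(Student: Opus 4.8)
The assertion splits into stationarity, which is essentially free, and ergodicity, which is the substance. Stationarity of $\{\pi_0(\tau_{X_{\omega}(m)}\omega)\}_{m\ge0}$ under $\mathbb{Q}$ is immediate from the hypothesis that it has the same law as $\{\pi_0(\tau_{X_{\Lambda(\omega)}(m)}\omega)\}_{m\ge0}$, which is stationary: having a shift-invariant law on path space is a property of the law alone. So the plan is to concentrate entirely on ergodicity, using the distributional identity to transfer the problem to the embedded environment and then running the Bolthausen--Sznitman mechanism~\cite{BolthausenSznitman2002} there; this is exactly the sense in which, in this absolutely-continuous-to-$\mu$ regime, stationarity already forces ergodicity.

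In detail, write $\zeta_m:=\tau_{X_{\Lambda(\omega)}(m)}\omega$ for the environment process of the embedded environment, a Markov chain on $\mathscr{S}(d)$ with kernel $K_d^{\Lambda}$. The process $\{\pi_0(\tau_{X_{\Lambda(\omega)}(m)}\omega)\}_{m\ge0}=\{\pi_0(\zeta_m)\}_{m\ge0}$ is the image of $\zeta$ under the coordinate map $\eta\mapsto\pi_0(\eta)$, which intertwines the time shift, so it is a measurable factor of $\zeta$; since a factor of an ergodic stationary process is ergodic, it suffices to prove that $\zeta$ is ergodic, and then the distributional identity in the hypothesis transfers this ergodicity to $\{\pi_0(\tau_{X_{\omega}(m)}\omega)\}_{m\ge0}$. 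Here one uses that $\zeta$ is stationary under the annealed law $\mathbb{Q}^{0}=\mathbb{Q}\rtimes\mathbb{P}_0^{\omega}$, which holds because $\mathbb{Q}$ is the stationary ergodic law furnished by Theorem~\ref{Lawler} for the embedded environment (balanced and, under the standing hypotheses, satisfying Assumptions~\ref{assump:1} and~\ref{assump:2}); in particular $\mathbb{Q}$ is $K_d^{\Lambda}$-invariant.

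To prove $\zeta$ is ergodic I would show that every bounded measurable $f:\mathscr{S}(d)\to\mathbb{R}$ with $K_d^{\Lambda}f=f$ $\mathbb{Q}$-a.e.\ is $\mathbb{Q}$-a.e.\ constant. Using $K_d^{\Lambda}$-invariance of $\mathbb{Q}$ together with the harmonicity equation, the one-step increment of $f$ along $\zeta$ has vanishing second moment, $\mathbb{E}_{\mathbb{Q}^{0}}\big[(f(\zeta_1)-f(\zeta_0))^2\big]=2\mathbb{E}_{\mathbb{Q}}[f^2]-2\mathbb{E}_{\mathbb{Q}}[f\cdot K_d^{\Lambda}f]=0$, so $f(\zeta_1)=f(\zeta_0)$ $\mathbb{Q}^{0}$-a.s. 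Unwinding $\zeta_1=\tau_{X_{\Lambda(\omega)}(1)}\omega$, this forces $f(\tau_v\omega)=f(\omega)$ for $\mathbb{Q}$-a.e.\ $\omega$ and every $v\in V_d$ with $\Lambda_\omega(o,v)>0$; by ellipticity (Assumption~\ref{assump:2}) this holds for every $v=\pm e_i$. Hence $f$ is $\mathbb{Q}$-a.e.\ invariant under each generating shift, therefore $\mu$-a.e.\ invariant under each $\tau_{e_i}$ by mutual absolute continuity, therefore $\mu$-a.e.\ invariant under the whole shift group, therefore $\mu$-a.e.\ constant by ergodicity of $\mu$, and so $\mathbb{Q}$-a.e.\ constant.

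The main obstacle is this last step — passing from a function invariant for the Markov kernel to a genuinely shift-invariant function and then to a constant. Ellipticity is indispensable there (it is what lets the walk visit every neighbour and hence forces invariance in every direction of $V_d$, not merely on the support of one $\omega(o,\cdot)$), and mutual absolute continuity with the shift-ergodic $\mu$ is the essential input, since the kernel-invariant $\mathbb{Q}$ need not itself be shift-ergodic a priori. A secondary, routine point that still needs care is the upgrade from ``$\mu$-a.e.\ invariant under each generator $\tau_{e_i}$'' to ``$\mu$-a.e.\ invariant under every $\tau_x$, $x\in\mathbb{Z}^d$'', which is what legitimises the appeal to ergodicity of $\mu$.
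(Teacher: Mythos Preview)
Your argument is correct and runs on the same Bolthausen--Sznitman mechanism as the paper's proof: the decisive step in both is that a $K_d^{\Lambda}$-harmonic function must, by ellipticity, be invariant under every generating shift $\tau_{\pm e_i}$, and then mutual absolute continuity with the shift-ergodic $\mu$ forces it to be constant. The difference is purely in packaging. You invoke the characterization ``ergodic $\Leftrightarrow$ bounded $K_d^{\Lambda}$-harmonic functions are $\mathbb{Q}$-a.e.\ constant'' for the full environment process $\zeta$, dispatch harmonicity to shift-invariance via the one-line $L^2$ identity $\mathbb{E}_{\mathbb{Q}^0}[(f(\zeta_1)-f(\zeta_0))^2]=0$, and then factor down to the local process. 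The paper instead works directly on the path space of the local process: for a shift-invariant event $A$ it sets $h(\omega)=\tilde P_\omega(A)$, shows $h(\bar\omega(n))$ is a bounded martingale converging to $\mathbf{1}_A$ (L\'evy upward), uses Birkhoff to rule out $h$ taking values in $(0,1)$ so that $h=\mathbf{1}_B$, and then reads off $K_d^{\Lambda}\mathbf{1}_B=\mathbf{1}_B$ from the martingale identity before running the same ellipticity endgame. Your route is shorter and avoids the L\'evy/Birkhoff detour; the paper's route makes the link between path-space invariant events and harmonic indicators explicit. Note that both versions tacitly use $K_d^{\Lambda}$-invariance of $\mathbb{Q}$ (supplied by Theorem~\ref{Lawler}), which is stronger than the stationarity of the local process stated in the lemma's hypothesis; you flag this explicitly, and the paper's Birkhoff step needs it just as much.
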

\begin{proof}
Stationarity follows trivially from the assumptions. We just have to show the ergodicity of the local process. 

Let $\mathscr{S}'= S(d)^{\mathbb{N}}$ denote all $S(d)$ valued trajectories of the local process $\{\pi_0(\tau_{X_{\omega}(m)}\omega)\}_{m \geq 0}$.

Let $\theta$ denote the canonical shifts or the $n^{th}$ iterate of the process. Denote by $\tilde{\mathcal{B}}$ the product $\sigma$-algebra. Let $A$ be a set which is invariant under $\theta$, i.e., $\theta^{-1} A = A$. 
 
Let $h(\omega) =\tilde{P}_{\omega}(A) $ where $\tilde{P}_\omega$ is the law if the chain starts from \\$\{\omega(x)\}_{x \in \mathbb{Z}^d}$
We must show 
\[
\tilde{P}_{\mathbb{Q}} (A) = \int_{\mathscr{S}(d)} \tilde{P}_{\omega}(A) d\mathbb{Q}(\omega) \in \{0,1\}
\]
Let $P^*_{\omega}$ denote the law of the local process $\{\pi_0(\tau_{X_{\Lambda(\omega)}(m)}\omega)\}_{m \geq 0}$ starting at $\{\omega(x)\}_{x \in \mathbb{Z}^d}$. 

Therefore, it is enough to show, \[
{P}^*_{\mathbb{Q}} (A) = \int_{\mathscr{S}(d)}P^*_{\omega}(A) d\mathbb{Q}(\omega) \in  \{0, 1\}.
\]

Denote $\bar{\omega}(n) = \tau_{X_{\Lambda(\omega)}(n)} \omega$. 

Start by noting that under the canonical filtration sequence $\mathscr{G}_{k}= \sigma(\omega, \bar{\omega}(1),\dots,\bar{\omega}(k-1))$,  
$h(\bar{\omega}(n))$ is a $\tilde{P}_{\mathbb{Q}}$ martingale since 
\[
\mathbb{E}_{\mathbb{Q}} [ \boldsymbol{1}_{A}| \omega, \bar{\omega}(1),\dots, \bar{\omega}({n}) ] = \mathbb{E}_{\mathbb{Q}} [ \boldsymbol{1}_{A}\circ \theta^n | \omega, \bar{\omega}(1),\dots, \bar{\omega}({n}) ] = h(\bar{\omega}(n)) \quad \mathbb{Q}\text{-a.e.}
\]
By the Levy Upwards Theorem,
\begin{equation}\label{eq:marty}
h(\bar{\omega}(n)) \to \boldsymbol{1}_A \qquad (n \to \infty) \quad\mathbb{Q}\text{-a.e.}
\end{equation}

Next we show $h = \boldsymbol{1}_B$($\mathbb{Q}$ \text{a.e.}) for some $B \in \mathcal{B}(d)'$. If not then \\$\mathbb{Q}( h \in [a,b])>0$ for some $0<a<b<1$. 

But by Birkhoff's Ergodic Theorem, 
\[
\frac{1}{n} \sum_{k=0}^{n-1} \boldsymbol{1}\{h(\bar{\omega}(k))\in [a,b]\} \to \Phi= \mathbb{E}_{\mathbb{Q}}[ \boldsymbol{1}\{h(\omega)\in [a,b]\} | \mathcal{I} ]\quad\mathbb{Q}\text{-a.e.}
\]
where $\mathcal{I}$ is the $\sigma$-algebra of invariant events. 

Observe, 
\[
\mathbb{E}_{\mathbb{Q}} [\Phi] = \mathbb{Q}[h \in [a,b]]>0.
\]
But this contradicts \ref{eq:marty} since $\Phi =0 $ ($\mathbb{Q}$ \text{a.e.}).
Now we note that from the martingale property and the Markov property, and the fact that $\{h(\bar{\omega}(n))\}_{n\geq0}$ is a martingale,
\begin{equation}\label{fund}
\boldsymbol{1}_{B}(\omega) 
  = \mathbb{E}_{\mathbb{Q}}\Big[\,\boldsymbol{1}_{B}(\bar{\omega}(1)) \,\big|\, \omega \,\Big]
  = K_d^{\Lambda} \,\boldsymbol{1}_{B}(\omega)
  \quad \mathbb{Q}\text{-a.e.}
\end{equation}
Combining \ref{fund} with ellipticity,
\begin{equation}{\label{ineq}}
\boldsymbol{1}_{B} \geq \boldsymbol{1}_{B} \circ \tau_e \quad (\mu\text{-\text{a.e.} or } \mathbb{Q}\text{-a.e.}) \quad \forall\, e \in {V}_d.
\end{equation}
Iterating \ref{ineq}, we obtain  
\[
\boldsymbol{1}_{B} \geq \boldsymbol{1}_{B} \circ \tau_x \quad (\mu\text{\text{-a.e.} or } \mathbb{Q}\text{\text{-a.e.}}) \quad \forall\, x \in \mathbb{Z}^d.
\]
Since $\mu[ \omega \in B] = \mu [\tau_{x}\omega\in B]$ for every $x \in \mathbb{Z}^d$, 

\begin{equation}{\label{erg}}
 \boldsymbol{1}_{B} = \boldsymbol{1}_{B} \circ \tau_x \quad \mu\text{\text{-a.e.} or}~\mathbb{Q}\text{\text{-a.e.}}
\end{equation}
Consider the set \[
\tilde{B} = \cap_{x \in \mathbb{Z}^d} \tau_{-x}B .
\] 
Observe $(\tau_x)^{-1} \tilde{B} = \tilde{B}$ which shows $\tilde{B}$ is invariant under shifts. 

From \ref{erg} we get that $ \mu(\tilde{B})= \mu(B) = 0$ or $1$ (by ergodicity). From the mutual absolute continuity of $\mathbb{Q}$ and $ \mu$, 
$\mathbb{Q}(B) =P^*_{\mathbb{Q}}(A)=0$ or $1$. This proves ergodicity of the local process in the environment $\mathbb{Q}$.
\end{proof}
Thus, in view of the previous proof, we have reduced the proof to the following lemma. 
\begin{lemma}~\label{Final}
    Let $\mathbb{Q}$ be a measure on $(\mathscr{S}(d), \mathcal{B}(d)')$ mutually absolutely continuous to
   $\mu$, such that the local process $\{\pi_0(\tau_{X_{\Lambda(\omega)}(m)}\omega)\}_{m \geq 0}$ is stationary and ergodic. Then $\{\pi_0(\tau_{X_{\Lambda(\omega)}(m)}\omega)\}_{m \geq 0}=\{\pi_0(\tau_{X_{\omega}(m)}\omega)\}_{m \geq 0}$ in distribution (and therefore both are stationary sequences) under $\mathbb{Q}$.
\end{lemma}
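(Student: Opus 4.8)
The plan is to prove Lemma~\ref{Final} by constructing, on one probability space, an explicit coupling of the balanced walk $X_{\Lambda(\omega)}$ with a companion ``reflected'' walk $Y$ whose local process has, marginally, exactly the law of the original walk's local process; the raison d'être of the embedding function $\Lambda$ — which averages the two opposite transition probabilities along each axis — is precisely to make such a coupling available. The key elementary observation is that, at any site, the probability that the balanced walk moves along axis $i$ equals $\Lambda_\omega(\cdot,e_i)+\Lambda_\omega(\cdot,-e_i)=\omega(\cdot,e_i)+\omega(\cdot,-e_i)$, which is the original walk's probability of moving along axis $i$ from the same site, and likewise the holding probability is unchanged; the balanced walk merely re-randomizes the sign within the chosen axis uniformly.

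I would work on the product of $(\mathscr{S}(d),\mathbb{Q})$ with independent families $(\epsilon_m)_{m\ge1}$ of fair $\{+1,-1\}$ coins, $(U_m)_{m\ge1}$ and $(V_m)_{m\ge1}$ of uniforms, all independent of $\omega$. Run $X_{\Lambda(\omega)}$ from the origin, using $U_{m+1}$ to select (with the correct probabilities) a holding move or an axis $i$ at $X_{\Lambda(\omega)}(m)$, and $\epsilon_{m+1}$ to choose the sign. Simultaneously maintain signed-permutation reflections $R_m\in\mathrm{Hom}_{\mathbb{Z}}(\mathbb{Z}^d,\mathbb{Z}^d)$ with $R_0=\mathrm{Id}$ and a companion walk $Y$ with $Y(0)=0$, living in the dynamically reflected environment $\omega^{(m)}:=R_m.\omega$ and obeying the invariant $Y(m)=R_m^{-1}X_{\Lambda(\omega)}(m)$. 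Given the common axis $i$, the original walk must move $+e_i$ with conditional probability $p:=\omega(\cdot,e_i)/(\omega(\cdot,e_i)+\omega(\cdot,-e_i))$ at the current site; I would realize this $p$-coin as the standard mixture of $\epsilon_{m+1}$ (used with probability $1-|2p-1|$) and the deterministic biased direction (used with probability $|2p-1|$), the choice governed by $V_{m+1}$. When the outcome agrees with $\epsilon_{m+1}$, $Y$ follows the balanced increment and $R_{m+1}=R_m$; when it disagrees, I compose $R_m$ with the axis-$i$ reflection, which is exactly the update that keeps $Y(m+1)=R_{m+1}^{-1}X_{\Lambda(\omega)}(m+1)$ and $\omega^{(m+1)}=R_{m+1}.\omega$ consistent — here the combined-action identity $\tau_xT.\omega=T(\tau_{Tx}\omega)$ of Theorem~\ref{setup1} does the bookkeeping. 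Since every $R_m$ fixes the origin, one gets $\pi_0(\tau_{X_{\Lambda(\omega)}(m)}\omega)=\pi_0(\tau_{Y(m)}\omega^{(m)})$ for all $m$, so the two local processes agree \emph{pathwise} in the coupling.

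It then remains to check that $\{\pi_0(\tau_{Y(m)}\omega^{(m)})\}_{m\ge0}$, built from $\omega\sim\mathbb{Q}$ and the coins, has the same law as the genuine original local process $\{\pi_0(\tau_{X_\omega(m)}\omega)\}_{m\ge0}$ under $\mathbb{Q}$. Conditionally on the past, the increment of $Y$ in $\omega^{(m)}=R_m.\omega$ has exactly the original transition probabilities $R_m.\omega(Y(m),\cdot)$ at the current site, and Theorem~\ref{setup2} together with Corollary~\ref{setup3} says that applying a signed permutation to the environment carries the original environment process to another environment process and carries a stationary measure to a stationary measure. I would use this, the group structure of the reflections, and an induction on the finite-dimensional marginals to re-express the law of $\{\pi_0(\tau_{Y(m)}\omega^{(m)})\}$ as the law of $\{\pi_0(\tau_{X_\omega(m)}\omega)\}$ under $\mathbb{Q}$. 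Combined with the pathwise identity this yields the equality in distribution asserted in Lemma~\ref{Final}; stationarity of both sequences under $\mathbb{Q}$ is then inherited from the hypothesis that the balanced one is stationary, and Lemma~\ref{equivalence} upgrades this to ergodicity.

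The main obstacle is exactly the marginal-law identification in the third step. The reflections $R_m$ are \emph{not} independent of $\omega$ — their law depends on $\omega$ along the explored path through the quantities $|2p-1|$ — and reflections do not commute with lattice translations, so one cannot invoke any invariance of $\mathbb{Q}$ under the reflection group (which need not hold, $\mathbb{Q}$ being only translation-invariant). The argument must be organized so that each reflection is applied only to the not-yet-relevant part of the environment and so that the combined-action identities of Section~\ref{Action} allow the dynamically reflected environment process to be rewritten, step by step, as a bona fide original environment process. Setting up this bookkeeping precisely, and verifying that no distributional distortion is introduced, is where the real work lies.
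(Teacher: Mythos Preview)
Your approach and the paper's share the same core idea: realize the balanced walk as a mixture, via fair coins, of the original walk and a reflected walk, then exploit that reflections fix the origin so that $\pi_0$ is reflection-invariant. The paper's implementation is simpler and dispatches your acknowledged obstacle directly.

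The paper uses only the single reflection $\alpha$ (sending each step $v$ to $-v$), not axis-by-axis signed permutations. A Bernoulli coin $\gamma_n$ selects the original kernel $K_d$ or the reflected kernel $G_d$, giving a conditional kernel $K_{\gamma_n}=\gamma_nK_d+(1-\gamma_n)G_d$; averaging over $\gamma_n$ recovers $K_d^\Lambda$. No companion walk $Y$ or running reflection $R_m$ is built. The paper then works with conditional cylinder probabilities: the one-line identity $K_\gamma(\alpha.\omega,B)=K_{1-\gamma}(\omega,\alpha.B)$ is iterated via the substitution $\omega\mapsto\alpha.\omega$ to show, by induction on $n$, that the conditional probability of $\bigcap_k\{\bar\omega(k)\in B_k\}$ given $(\gamma_1,\dots,\gamma_n)$ is unchanged when one applies $\alpha$ to some $B_k$ and flips the adjacent $\gamma_{k+1}$. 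Specializing to $B_k=\pi_0^{-1}(A_k)$ --- sets invariant under $\alpha$ --- the conditional law of the local process is identical for every coin sequence; in particular the all-ones sequence (the original walk) agrees with the average (the balanced walk). This is exactly the induction you gesture at, carried out without tracking $R_m$ and without invoking any reflection-invariance of $\mathbb{Q}$: the substitution is purely algebraic at the level of kernels. Note also that your pathwise identity $\pi_0(\tau_{X_{\Lambda(\omega)}(m)}\omega)=\pi_0(\tau_{Y(m)}\omega^{(m)})$ is just a restatement of ``$R_m$ fixes the origin'' and does not by itself advance the distributional identification; the real work is the third step you flag, and the paper's symmetry lemma is the clean way to execute it.
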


\section{Proof of Lemma~\ref{Final}:} 
Let $\lambda'$ be the stationary ergodic measure of the chain $\{\tau_{X_{\Lambda(\omega)}(n)} \omega\}_{n \geq 0}$. We set $\mathbb{Q} = \lambda'$. Fix $\omega \in \mathscr{S}(d)$. 

Recall that $\{X_{\omega}(n)\}_{n \geq 0}$ is the random walk with the law \begin{align}
    \mathbb{P}[X_{\omega}(1) =x+v \Big |X_{\omega}(0) =x] = \omega(x,v) \quad v \in V_d.
\end{align}
The environment process is therefore given by $K_d^{\operatorname{Id}}\equiv K_d$. 

Define the reflected walk, $\{X_{\alpha(\omega)}(n)\}_{n \geq 0}$ with the law \begin{align}
    \mathbb{P}[X_{\alpha(\omega)}(1) =x+v \Big |X_{\alpha(\omega)}(0) =x] = \omega(x,-v). 
\end{align} 
The environment process is given by \[
G_d(\omega,B):=K^{\alpha}_d(\omega, B) = K_d(\alpha.\omega, \alpha.B) \quad \forall B \in \mathcal{B}(d)'.
\]
We redefine $\{X_{\Lambda(\omega)}(n)\}_{n \geq 0}$ in the following manner.\[
L_{\Lambda(\omega)} g(X_{\Lambda(\omega)}(n)) = \mathbb{E}[\gamma_n L_{\omega} g(X_{\Lambda(\omega)}(n)) + (1-\gamma_n) L_{\alpha(\omega)} g(X_{\Lambda(\omega)}(n))] \quad \forall g: \mathbb{Z}^d \to \mathbb{R},~\gamma_n \sim \text{Ber}\left(\frac{1}{2}\right)
\]
where $L_{\Lambda(\omega)}, L_{\omega}, L_{\alpha(\omega)}  $ kernels of $ X_{\Lambda(\omega)}, X_{\omega}, X_{\alpha(\omega)}$ respectively and $\gamma_{n}$ is a Bernoulli Random Variable independent of the environment and position of the walk. Further $\{\gamma_n\}_{n \geq 1}$ is a sequence of i.i.d. Bernoulli random variables.    
We therefore define the conditional environment process \[
K_{d}^{\Lambda}(\omega, B) \Big| \gamma := K_\gamma(\omega, B) = \gamma K_d(\omega, B) + (1-\gamma) G_d(\omega, B).
\]
We now state an easy lemma without proof. 
\begin{lemma}~\label{main*}
    $K_\gamma(\alpha. \omega , B) = K_{1-\gamma}(\omega, \alpha. B)$.
\end{lemma}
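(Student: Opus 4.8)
The plan is to treat this as a purely formal manipulation. Since $\gamma$ takes only the values $0$ and $1$, the asserted identity $K_\gamma(\alpha.\omega,B)=K_{1-\gamma}(\omega,\alpha.B)$ is equivalent to the pair
\begin{align*}
K_d(\alpha.\omega,B) &= G_d(\omega,\alpha.B), \\
G_d(\alpha.\omega,B) &= K_d(\omega,\alpha.B),
\end{align*}
obtained by setting $\gamma=1$ and $\gamma=0$ respectively in the definition $K_\gamma(\omega,B)=\gamma K_d(\omega,B)+(1-\gamma)G_d(\omega,B)$ (equivalently, one expands both sides of the claimed identity linearly in $\gamma$ and matches the coefficients of $\gamma$ and of $1-\gamma$). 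So first I would record this reduction, and then prove the two displayed identities.

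Both follow from the defining relation $G_d(\omega,B)=K_d(\alpha.\omega,\alpha.B)$ once one knows that the reflection $\alpha$ is an involution, i.e.\ $\alpha.(\alpha.\omega)=\omega$ and $\alpha.(\alpha.B)=B$. Replacing $B$ by $\alpha.B$ in the definition gives $G_d(\omega,\alpha.B)=K_d(\alpha.\omega,\alpha.(\alpha.B))=K_d(\alpha.\omega,B)$, which is the first identity; replacing $\omega$ by $\alpha.\omega$ gives $G_d(\alpha.\omega,B)=K_d(\alpha.(\alpha.\omega),\alpha.B)=K_d(\omega,\alpha.B)$, which is the second. Chaining these,
\begin{align*}
K_\gamma(\alpha.\omega,B) &= \gamma\,K_d(\alpha.\omega,B) + (1-\gamma)\,G_d(\alpha.\omega,B) \\
&= \gamma\,G_d(\omega,\alpha.B) + (1-\gamma)\,K_d(\omega,\alpha.B) = K_{1-\gamma}(\omega,\alpha.B),
\end{align*}
which is exactly the claim.

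The only real content is therefore the involution property of the reflection $\alpha$, which acts on environments by $(\alpha.\omega)(x,v)=\omega(x,-v)$ and on Borel sets by $\alpha.B=\{\alpha.\omega:\omega\in B\}$; one should also confirm that, under these conventions, $G_d(\omega,B)=K_d(\alpha.\omega,\alpha.B)$ really does reproduce the environment-process kernel of the reflected walk $\{X_{\alpha(\omega)}(n)\}$. Both points are immediate from the definitions: $(\alpha.(\alpha.\omega))(x,v)=(\alpha.\omega)(x,-v)=\omega(x,v)$, while a one-line check gives $\tau_v(\alpha.\omega)=\alpha.(\tau_v\omega)$ for every $v\in V_d$, so that $\delta_{\tau_v(\alpha.\omega)}(\alpha.B)=\delta_{\tau_v\omega}(B)$ and hence $K_d(\alpha.\omega,\alpha.B)=\sum_{v\in V_d}\omega(o,-v)\,\delta_{\tau_v\omega}(B)=G_d(\omega,B)$. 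I do not expect any genuine obstacle here --- as the author remarks, the lemma is essentially bookkeeping about the reflection symmetry encoded in $\alpha$ --- so the write-up will consist mainly of making the reduction to the two displayed identities and the involution property of $\alpha$ explicit.
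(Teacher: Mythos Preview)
Your argument is correct, and the paper in fact states this lemma explicitly \emph{without proof}, so there is no alternative approach to compare against; the reduction to the two identities via the involution $\alpha\circ\alpha=\operatorname{Id}$ together with the defining relation $G_d(\omega,B)=K_d(\alpha.\omega,\alpha.B)$ is exactly the intended bookkeeping.

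One small caveat on conventions: in the paper's action framework (Section~\ref{Action}) one has $(T.\omega)(x,v)=\omega(Tx,v)$, so with $\alpha=-\operatorname{Id}$ this reads $(\alpha.\omega)(x,v)=\omega(-x,v)$ rather than $\omega(x,-v)$, and Theorem~\ref{setup1} then gives $\tau_v(\alpha.\omega)=\alpha.(\tau_{-v}\omega)$, not $\alpha.(\tau_v\omega)$ as you wrote. This does not affect your main chain of equalities, which only uses that $\alpha$ is an involution on $\mathscr{S}(d)$ and that $G_d(\omega,B)=K_d(\alpha.\omega,\alpha.B)$ --- both of which hold under either reading of $\alpha$ --- but you should align your side computation with whichever convention you adopt.
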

This is the control argument using Bernoulli variables as we had mentioned, and now we shall use it explicitly to demonstrate the stationarity of the local process or the equivalent statement of Lemma~\ref{Final}.  Let $\bar{\omega}(k)$ denote the environment process $\tau_{X_{\Lambda(\omega)}(k)}\omega$ for $ k \geq0$.
\begin{lemma}~\label{main**} 
Let $ n \geq 1$. Let $B_0, B_1,\dots,B_n \in \mathcal{B}(d)' $. Let the probability measure associated with $\omega$ be $\lambda'$. Then for every $0\leq k \leq n-1$, we have\begin{align*}
&\lambda'\Bigl[
    \omega \in B_0,\,
    \bar{\omega}(1) \in B_1,\,
    \ldots,\,
    \bar{\omega}(k) \in \alpha(B_{k}),\,
    \ldots,\,
    \bar{\omega}(n) \in B_n
    \,\Bigm|\,
    \gamma_1,\ldots,\gamma_n
\Bigr] \nonumber \\
&\qquad =
\lambda'\Bigl[
    \omega \in B_0,\,
    \ldots,\,
    \bar{\omega}(k) \in \alpha(B_{k}),\ \bar{\omega}(k+1) \in \alpha(B_{k+1}),
    \ldots,\,
    \bar{\omega}(n) \in B_n
    \,\Bigm|\,
    \gamma_1,\ldots,1-\gamma_{k+1},\ldots,\gamma_n
\Bigr].
\end{align*}

\end{lemma}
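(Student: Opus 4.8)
The plan is to prove Lemma~\ref{main**} by induction on $k$, exploiting the explicit decomposition $K_d^{\Lambda} \mid \gamma = K_\gamma = \gamma K_d + (1-\gamma)G_d$ together with the reflection identity $K_\gamma(\alpha.\omega, B) = K_{1-\gamma}(\omega, \alpha.B)$ from Lemma~\ref{main*}. The base case $k=0$ is the heart of the matter, and the inductive step is a routine propagation of the same bookkeeping one position further along the path.

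For the base case $k=0$, I would write out the left-hand probability as an iterated integral against the conditional kernels. Conditioned on $\gamma_1,\dots,\gamma_n$, the process $\bar\omega(0),\bar\omega(1),\dots,\bar\omega(n)$ is a (time-inhomogeneous) Markov chain with step kernels $K_{\gamma_1},\dots,K_{\gamma_n}$, and since $\lambda'$ is stationary for the averaged chain $K_d^{\Lambda}$ but also—this is the point—$\lambda'$ is invariant under the reflection $\alpha$ (because $\Lambda$ is balanced and by construction $\lambda' \ll \mu \gg \lambda'$ with $\mu$ stationary, so reflection-invariance of the balanced environment law is inherited; if this needs an extra word it is that $\alpha$ acts by $v \mapsto -v$ and $\Lambda_\omega(x,v) = \Lambda_\omega(x,-v)$, so $\lambda'\circ\alpha = \lambda'$). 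Then
\[
\lambda'\bigl[\omega \in \alpha(B_0),\, \bar\omega(1)\in B_1,\dots \bigm| \gamma_1,\dots,\gamma_n\bigr]
= \lambda'\bigl[\alpha.\omega \in B_0,\, \tau_{X_{\Lambda(\alpha.\omega)}(1)}(\alpha.\omega) \in \alpha(B_1),\dots\bigm|\dots\bigr],
\]
using $\alpha$-invariance of $\lambda'$ to substitute $\omega \mapsto \alpha.\omega$, and then pushing the reflection through each kernel via Lemma~\ref{main*}: the first step kernel $K_{\gamma_1}(\alpha.\omega,\cdot)$ becomes $K_{1-\gamma_1}(\omega,\alpha.\cdot)$, which is exactly the statement that flipping $\gamma_1$ and replacing $B_1$ by $\alpha(B_1)$ leaves the law unchanged, while all later kernels $K_{\gamma_2},\dots,K_{\gamma_n}$ and target sets $B_2,\dots,B_n$ are untouched because the reflection has already been absorbed. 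Matching this against the claimed right-hand side (with $k=0$: event $\omega \in \alpha(B_0)$, $\bar\omega(1)\in\alpha(B_1)$, rest unchanged, $\gamma_1 \mapsto 1-\gamma_1$) gives the $k=0$ identity.

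For the inductive step, assuming the identity holds for some $k-1$, I would condition on $\mathscr{G}_k = \sigma(\omega,\bar\omega(1),\dots,\bar\omega(k-1))$ and apply the $k=0$-type argument to the sub-chain started at $\bar\omega(k)$ with the shifted Bernoulli sequence $\gamma_{k+1},\dots,\gamma_n$; the Markov property lets the first $k$ coordinates sit inertly outside, and one transfers the reflection from position $k$ to position $k+1$ exactly as in the base case, flipping $\gamma_{k+1}$ and converting $B_{k+1}$ to $\alpha(B_{k+1})$. Chaining this with the induction hypothesis (which has already converted coordinates $0$ through $k-1$) yields the stated formula. Finally, summing/integrating over the Bernoulli variables $\gamma_1,\dots,\gamma_n \sim \mathrm{Ber}(1/2)$ i.i.d.—which are symmetric, so the map $\gamma_{k+1}\mapsto 1-\gamma_{k+1}$ is measure-preserving—removes the conditioning and shows that the unconditional finite-dimensional distributions of $\{\pi_0(\tau_{X_{\Lambda(\omega)}(m)}\omega)\}$ are invariant under reflecting any single coordinate $B_k \mapsto \alpha(B_k)$; iterating over $k$ and composing gives equality in distribution with $\{\pi_0(\tau_{X_\omega(m)}\omega)\}$, since the original walk is recovered from the embedded one by the $\gamma_n \equiv 1$ branch and the reflected walk by $\gamma_n \equiv 0$, and the averaging makes the local process law a symmetric mixture that is insensitive to the individual choices.

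The main obstacle is the base case: one has to be careful that $\lambda'$ is genuinely $\alpha$-invariant (not merely $\Lambda$-stationary) and that Lemma~\ref{main*} is applied in the correct direction at each step so that the reflection, once introduced at coordinate $k$, propagates to coordinate $k+1$ and then halts rather than cascading down the whole path. Getting the measurability/conditioning correct for the time-inhomogeneous Markov chain conditioned on the $\gamma$'s is a bookkeeping nuisance but not a real difficulty, since conditionally on $\gamma_1,\dots,\gamma_n$ everything is a finite product of kernels.
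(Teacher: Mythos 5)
Your proposal diverges from the paper's proof at the very first step, and the divergence introduces a genuine gap. The paper's proof of Lemma~\ref{main**} is an induction on $n$ (not on $k$), and nowhere does it invoke $\alpha$-invariance of $\lambda'$: at each stage it performs a bare change of variable $\omega_k \mapsto \alpha.\omega_k$ in the $k$-th integration coordinate, which produces the pushforward measure rather than assuming it is preserved, and then applies Lemma~\ref{main*} pointwise to trade $K_{\gamma_{k+1}}(\alpha.\omega_k,\cdot)$ for $K_{1-\gamma_{k+1}}(\omega_k,\alpha.\cdot)$. You instead assume $\lambda' \circ \alpha = \lambda'$ and substitute under the integral; this is a different route.

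The first problem is that $\alpha$-invariance of $\lambda'$ is not established, and it does not follow from what you say. The kernel identity $K_d^{\Lambda}(\alpha.\omega,\alpha.B)=K_d^{\Lambda}(\omega,B)$ shows only that $\lambda' \circ \alpha$ is also $K_d^{\Lambda}$-stationary; to conclude $\lambda'\circ\alpha = \lambda'$ you would additionally need uniqueness of the stationary law in the right class, which in turn requires $\mu \circ \alpha = \mu$ so that $\lambda'\circ\alpha \ll \mu$. Reflection invariance of $\mu$ is nowhere assumed in the paper — $\mu$ is only stationary and ergodic with respect to lattice shifts $\tau_x$, and $\alpha$ is a pointwise flip of direction indices, not a shift. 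So "$\Lambda$ is balanced, hence $\lambda'$ is $\alpha$-invariant" is a non sequitur without further hypotheses.

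The second and sharper problem is in your claim that after replacing $K_{\gamma_1}(\alpha.\omega,\cdot)$ by $K_{1-\gamma_1}(\omega,\alpha.\cdot)$ "all later kernels $K_{\gamma_2},\dots,K_{\gamma_n}$ and target sets $B_2,\dots,B_n$ are untouched because the reflection has already been absorbed." This is false as stated. Once the first step lands in $\alpha.B_1$ rather than $B_1$, the argument of $K_{\gamma_2}$ is a reflected environment, and $K_{\gamma_2}(\alpha.\eta,\cdot)=K_{1-\gamma_2}(\eta,\alpha.\cdot)$ by Lemma~\ref{main*}; so the reflection does not halt, it cascades. If you track this honestly, the $k=0$ step you carry out (assuming $\alpha$-invariance) gives
\[
\lambda'\bigl[\omega\in\alpha(B_0),\,\bar\omega(1)\in B_1,\ldots\,\big|\,\gamma_1,\ldots\bigr]
= \lambda'\bigl[\omega\in B_0,\,\bar\omega(1)\in\alpha(B_1),\ldots\,\big|\,1-\gamma_1,\ldots\bigr],
\]
with $B_0$, not $\alpha(B_0)$, on the right. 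This does not match the lemma's right-hand side, which keeps $\alpha(B_k)$ at position $k$ and only adds a reflection at position $k+1$. The paper's bookkeeping is specifically designed so that the reflection at position $k$ stays in place and a new one appears at $k+1$; your base case produces a different identity, so the inductive step would not chain to the statement as written. Before the rest of the argument can be trusted you must either establish $\alpha$-invariance of $\lambda'$ from explicit hypotheses, or abandon it and reproduce the paper's substitution-plus-Lemma~\ref{main*} mechanism, and in either case carefully account for how the reflection propagates (or does not) past position $k+1$.
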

To see how this implies stationarity, take $B_i = \pi_0^{-1}(A_i)$ where $A_i \in \mathcal{B}(d)$. Then \begin{align*}
\lambda' \left[ \bigcap_{k=0}^n \{ \bar{\omega}(k) \in \pi_0^{-1}(A_k) \} \,\middle|\, \gamma_1,\dots, \gamma_n \right] = \frac{1}{2^n} \sum_{\{\gamma_i\}_{i=1}^n \in \{0,1\}^n} \lambda' \left[ \bigcap_{k=0}^n \{ \bar{\omega}(k) \in \pi_0^{-1}(A_k) \} \,\middle|\, \{\gamma_i\}_{1\leq i\leq n} \right]
\end{align*}
The last expression reduces to $\lambda'[ \omega \in \pi_0^{-1} (A_0),\dots,\bar{\omega}(n) \in \pi_0^{-1}(A_n)]$. Since $\{\bar{\omega}(k)\}_{k \geq 0}$ is stationary under $\lambda'$, this establishes the assertion of Lemma~\ref{Final} and therefore stationarity of $\{\pi_0(\tau_{X_{\omega}(k)} \omega) \}_{k \geq 0}$. Note that taking $\gamma_i=1$, it is clear that \[
\tau_{X_{\Lambda(\omega)}(n)} \omega |\{\gamma_i=1\quad i=1,2,\dots,n\} = \tau_{X_{\omega}(n)} \omega. 
\]
This shows that choosing $\gamma_i$'s controls the path suitably. 

Therefore, it is now enough to establish Lemma~\ref{main**}. 

\textit{Proof of Lemma~\ref{main**}:} We shall prove this result using induction. First for $n=1$, we note that \[
\lambda'[\alpha(\omega) \in B_0,\bar{\omega}(1) \in B_1|\gamma] = \int_{\alpha(B_0)} K_{\gamma}(\omega,B_1) d\lambda'(\omega).
\]
We perform the substitution $\omega \mapsto \alpha(\omega)$. The integral reduces to \[
\int_{B_0} K_{\gamma} (\alpha.\omega, B_1) d \lambda' \circ \alpha .
\]
By Lemma~\ref{main*}, the integral reduces to \[
\int_{B_0} K_{1-\gamma} (\omega, \alpha.B_1) d \lambda' \circ \alpha = \mathbb{E}[\boldsymbol{1}_{\omega} (\alpha(B_0)) \mathbb{E}[\boldsymbol{1}_{\bar{\omega}(1)}(\alpha.B_1)\Big|\omega,1-\gamma]]
\]
This reduces to $\lambda'[\omega\in \alpha.B_0, \bar{\omega}(1)\in \alpha.B_1| 1-\gamma]$. 

Suppose the result is true for $n=m\geq1$. Let $n=m+1$. We verify the statement for $k=m+1$ only for simplicity, as others follow from the technique of proof. For $k=m+1$, we write \[
\int_{B_0 \times B_1 \times B_2 \times \dots \times \alpha.B_{m}} K_{\gamma_{m+1}}(\omega_m, B_{m+1}) d \lambda'( \omega_m,\dots, \omega| \gamma_1,\dots, \gamma_m) .
\]  
Perform the substitution $ (\omega_m,\dots, \omega) \mapsto (\alpha. \omega_m,\dots, \omega)$ to get \[
\int_{B_0 \times B_1 \times B_2 \times \dots \times B_{m}} K_{ \gamma_{m+1}}(\alpha.\omega_m, B_{m+1}) d \lambda'( \alpha.\omega_m,\dots, \omega| \gamma_1,\dots, \gamma_m) .
\]
Using Lemma~\ref{main*}, the integral reduces to 
\begin{align}
\int_{B_0\times B_1 \times B_2 \times \cdots \times B_m}
K_{1-\gamma_{m+1}}\bigl(\omega_m, \alpha(B_{m+1})\bigr)\,
d\lambda'\bigl(
    \alpha(\omega_m),\ldots,\omega
    \,\Bigm|\,
    \gamma_1,\ldots,\gamma_m
\bigr) \nonumber.
\end{align}
Now we write the above expression as \[
\mathbb{E}\Big[\mathbb{E}\Big[\boldsymbol{1}_{\alpha. B_{m+1}}(\bar{\omega}(m+1))\Big| \bar{\omega}(m),1-\gamma_{m+1}\Big] \boldsymbol{1}_{\bar{\omega}(m)}(\alpha.B_m)\boldsymbol{1}_{\bar{\omega}(m-1)}(B_{m-1})\dots, \boldsymbol{1}_{\omega}(B_{0}) \Big| \gamma_1,\dots, \gamma_m \Big].
\]
This reduces by definition to the desired form, which completes the proof of Lemma~\ref{main**}.

\bibliographystyle{aomalpha}
\bibliography{bib,references}
\end{document}